\numberwithin{equation}{section}
\theoremstyle{theorem}
\newtheorem{theorem}{Theorem}[section]
\newtheorem*{theorem*}{Theorem}
\newtheorem{corollary}[theorem]{Corollary}
\newtheorem{lemma}[theorem]{Lemma}
\newtheorem{proposition}[theorem]{Proposition}
\providecommand{\customgenericname}{}
\newcommand{\newcustomtheorem}[2]{%
	\newenvironment{#1}[1]
	{%
		\renewcommand\customgenericname{#2}%
		\renewcommand\theinnercustomgeneric{##1}%
		\innercustomgeneric
	}
	{\endinnercustomgeneric}
}
\theoremstyle{definition}
\newtheorem*{example*}{Example}
\newtheorem*{examples*}{Examples}
\newtheorem{remark}[theorem]{Remark}
\newtheorem*{remark*}{Remark}
\newtheorem*{remarks*}{Remarks}
\newtheorem*{note*}{Note}
\newtheoremstyle{named}{}{}{\itshape}{}{\bfseries}{.}{.5em}{#1\thmnote{ #3}}
\theoremstyle{named}
\DeclareMathAlphabet{\mydutchcal}{U}{dutchcal}{m}{n}
\DeclareMathAlphabet{\mybbm}{U}{BOONDOX-ds}{m}{n}
\DeclareSymbolFont{cyrletters}{OT2}{wncyr}{m}{n}
\DeclareMathSymbol{\cyb}{\mathalpha}{cyrletters}{'142}
\DeclareMathSymbol{\cyB}{\mathalpha}{cyrletters}{'102}
\DeclareMathSymbol{\cyD}{\mathalpha}{cyrletters}{'104}
\DeclareMathSymbol{\cydje}{\mathalpha}{cyrletters}{'016}
\DeclareMathSymbol{\cyL}{\mathalpha}{cyrletters}{'114}
\newcommand{\hcyD}{\widehat{\cyD}}
\newcommand{\hcyL}{\widehat{\cyL}}
\newcommand{\hcydje}{\widehat{\cydje}}
\newcommand{\tcyb}{\widetilde{\cyb}}
\newcommand{\Supp}{\operatorname{Supp}}
\newcommand{\Ind}{\operatorname{Ind}}
\newcommand{\ii}{\mybbm{i}}
\newcommand{\RP}{\operatorname{RP}}
\newcommand{\hp}{\hat{p}}
\newcommand{\hT}{\hat{T}}
\newcommand{\hP}{\widehat{P}}
\newcommand{\BB}{\mydutchcal{B}}
\newcommand{\DD}{\mydutchcal{D}}
\newcommand{\hDD}{\widehat{\mydutchcal{D}}}
\newcommand{\LL}{\mydutchcal{L}}
\newcommand{\hLL}{\widehat{\mydutchcal{L}}}
\newcommand{\RR}{\mydutchcal{R}}
\newcommand{\frakD}{\mathfrak{D}}
\newcommand{\frakA}{\mathfrak{A}}
\newcommand{\hfrakA}{\widehat{\mathfrak{A}}}
\newcommand{\rH}{\mathrm{H}}
\title[KKS determinants]{Domino tilings, nonintersecting lattice paths and subclasses of Koutschan--Krattenthaler--Schlosser determinants}
\author[Q. Chen]{Qipin Chen}
\address[Q. Chen]{Amazon, Seattle, WA 98109, USA}
\email{qipinche@amazon.com}
\author[S. Chern]{Shane Chern}
\address[S. Chern]{Fakult\"at f\"ur Mathematik, Universit\"at Wien, Oskar-Morgenstern-Platz 1, Wien 1090, Austria}
\email{chenxiaohang92@gmail.com, xiaohangc92@univie.ac.at}
\author[A. Yoshida]{Atsuro Yoshida}
\address[A. Yoshida]{Fakult\"at f\"ur Mathematik, Universit\"at Wien, Oskar-Morgenstern-Platz 1, Wien 1090, Austria}
\email{atsuro.yoshida@univie.ac.at}
\date{}
\keywords{Koutschan--Krattenthaler--Schlosser determinants, domino tilings, Aztec-type domains, nonintersecting lattice paths, Delannoy numbers, H-Delannoy numbers, holonomic Ansatz, creative telescoping, modular reduction.}
\subjclass[2020]{Primary 15A15; Secondary 05A15, 05B45, 82B20.}
\begin{document}
	
\sloppy

\begin{abstract}
	Koutschan, Krattenthaler and Schlosser recently considered a family of binomial determinants. In this work, we give combinatorial interpretations of two subclasses of these determinants in terms of domino tilings and nonintersecting lattice paths, thereby partially answering a question of theirs. Furthermore, the determinant evaluations established by Koutschan, Krattenthaler and Schlosser produce many product formulas for our weighted enumerations of domino tilings and nonintersecting lattice paths. However, there are still two enumerations left corresponding to conjectural formulas made by the three. We hereby prove the two conjectures using the principle of holonomic Ansatz plus the approach of modular reduction for creative telescoping, and hence fill the gap.
\end{abstract}

\maketitle

\section{Introduction}

\subsection{Background}

Koutschan, Krattenthaler and Schlosser~\cite{KKS2025} recently launched the study of a family of \emph{binomial} determinants (\emph{KKS determinants} for short):
\begin{align}\label{eq:KKS-det-1}
	\underset{{0\le i,j\le n-1}}{\det} \left(l^{i+b}\binom{mj+i+c}{mj+a} + \binom{mj-i+d}{mj+a}\right),
\end{align}
where $a,b,c,d,l,m\in \mathbb{Z}$ are given parameters. In particular, their research focuses on the determinants that factorize well, that is, those satisfying a \emph{product}-like evaluation for all $n\ge 1$.

The overture of the work of Koutschan, Krattenthaler and Schlosser revolves around a combinatorial context introduced by Di Francesco. In 2021, Di Francesco~\cite{DF2021} considered domino tilings of Aztec triangles as a continuation of the work of Guitter and himself~\cite{DFG2020} on the twenty-vertex model, an object of interest to physicists \cite{Bax1982, Kel1974}. What is referred to as an \emph{Aztec triangle} $\frakA(n)$ is the domain taking the form of the left of Fig.~\ref{fig:aztec}, while the right side presents an instance of a tiling of this domain by \emph{dominoes}. See Remark~\ref{rmk:Aztec} for a concrete definition of $\frakA(n)$.

\begin{figure}[ht]
	\begin{tikzpicture}[scale=.5]

\draw[ultra thick] (0,7) rectangle (1,8);
\draw[ultra thick] (1,8) rectangle (2,9);
\draw[ultra thick] (2,9) rectangle (3,10);
\draw[ultra thick] (3,10) rectangle (4,11);
\draw[ultra thick] (0,6) rectangle (1,7);
\draw[ultra thick] (1,7) rectangle (2,8);
\draw[ultra thick] (2,8) rectangle (3,9);
\draw[ultra thick] (3,9) rectangle (4,10);
\draw[ultra thick] (4,10) rectangle (5,11);
\draw[ultra thick] (0,5) rectangle (1,6);
\draw[ultra thick] (1,6) rectangle (2,7);
\draw[ultra thick] (2,7) rectangle (3,8);
\draw[ultra thick] (3,8) rectangle (4,9);
\draw[ultra thick] (4,9) rectangle (5,10);
\draw[ultra thick] (0,4) rectangle (1,5);
\draw[ultra thick] (1,5) rectangle (2,6);
\draw[ultra thick] (2,6) rectangle (3,7);
\draw[ultra thick] (3,7) rectangle (4,8);
\draw[ultra thick] (4,8) rectangle (5,9);
\draw[ultra thick] (5,9) rectangle (6,10);
\draw[ultra thick] (0,3) rectangle (1,4);
\draw[ultra thick] (1,4) rectangle (2,5);
\draw[ultra thick] (2,5) rectangle (3,6);
\draw[ultra thick] (3,6) rectangle (4,7);
\draw[ultra thick] (4,7) rectangle (5,8);
\draw[ultra thick] (5,8) rectangle (6,9);
\draw[ultra thick] (0,2) rectangle (1,3);
\draw[ultra thick] (1,3) rectangle (2,4);
\draw[ultra thick] (2,4) rectangle (3,5);
\draw[ultra thick] (3,5) rectangle (4,6);
\draw[ultra thick] (4,6) rectangle (5,7);
\draw[ultra thick] (5,7) rectangle (6,8);
\draw[ultra thick] (6,8) rectangle (7,9);
\draw[ultra thick] (0,1) rectangle (1,2);
\draw[ultra thick] (1,2) rectangle (2,3);
\draw[ultra thick] (2,3) rectangle (3,4);
\draw[ultra thick] (3,4) rectangle (4,5);
\draw[ultra thick] (4,5) rectangle (5,6);
\draw[ultra thick] (5,6) rectangle (6,7);
\draw[ultra thick] (6,7) rectangle (7,8);
\draw[ultra thick] (0,0) rectangle (1,1);
\draw[ultra thick] (2,2) rectangle (3,3);
\draw[ultra thick] (4,4) rectangle (5,5);
\draw[ultra thick] (6,6) rectangle (7,7);
\draw[fill=white,draw=gray] (0,7) rectangle (1,8);
\draw[fill=white,draw=gray] (1,8) rectangle (2,9);
\draw[fill=white,draw=gray] (2,9) rectangle (3,10);
\draw[fill=white,draw=gray] (3,10) rectangle (4,11);
\draw[fill=white,draw=gray] (0,6) rectangle (1,7);
\draw[fill=white,draw=gray] (1,7) rectangle (2,8);
\draw[fill=white,draw=gray] (2,8) rectangle (3,9);
\draw[fill=white,draw=gray] (3,9) rectangle (4,10);
\draw[fill=white,draw=gray] (4,10) rectangle (5,11);
\draw[fill=white,draw=gray] (0,5) rectangle (1,6);
\draw[fill=white,draw=gray] (1,6) rectangle (2,7);
\draw[fill=white,draw=gray] (2,7) rectangle (3,8);
\draw[fill=white,draw=gray] (3,8) rectangle (4,9);
\draw[fill=white,draw=gray] (4,9) rectangle (5,10);
\draw[fill=white,draw=gray] (0,4) rectangle (1,5);
\draw[fill=white,draw=gray] (1,5) rectangle (2,6);
\draw[fill=white,draw=gray] (2,6) rectangle (3,7);
\draw[fill=white,draw=gray] (3,7) rectangle (4,8);
\draw[fill=white,draw=gray] (4,8) rectangle (5,9);
\draw[fill=white,draw=gray] (5,9) rectangle (6,10);
\draw[fill=white,draw=gray] (0,3) rectangle (1,4);
\draw[fill=white,draw=gray] (1,4) rectangle (2,5);
\draw[fill=white,draw=gray] (2,5) rectangle (3,6);
\draw[fill=white,draw=gray] (3,6) rectangle (4,7);
\draw[fill=white,draw=gray] (4,7) rectangle (5,8);
\draw[fill=white,draw=gray] (5,8) rectangle (6,9);
\draw[fill=white,draw=gray] (0,2) rectangle (1,3);
\draw[fill=white,draw=gray] (1,3) rectangle (2,4);
\draw[fill=white,draw=gray] (2,4) rectangle (3,5);
\draw[fill=white,draw=gray] (3,5) rectangle (4,6);
\draw[fill=white,draw=gray] (4,6) rectangle (5,7);
\draw[fill=white,draw=gray] (5,7) rectangle (6,8);
\draw[fill=white,draw=gray] (6,8) rectangle (7,9);
\draw[fill=white,draw=gray] (0,1) rectangle (1,2);
\draw[fill=white,draw=gray] (1,2) rectangle (2,3);
\draw[fill=white,draw=gray] (2,3) rectangle (3,4);
\draw[fill=white,draw=gray] (3,4) rectangle (4,5);
\draw[fill=white,draw=gray] (4,5) rectangle (5,6);
\draw[fill=white,draw=gray] (5,6) rectangle (6,7);
\draw[fill=white,draw=gray] (6,7) rectangle (7,8);
\draw[fill=white,draw=gray] (0,0) rectangle (1,1);
\draw[fill=white,draw=gray] (2,2) rectangle (3,3);
\draw[fill=white,draw=gray] (4,4) rectangle (5,5);
\draw[fill=white,draw=gray] (6,6) rectangle (7,7);
	\end{tikzpicture}\qquad\qquad\qquad
	\begin{tikzpicture}[scale=.5]

\draw[ultra thick] (0,7) rectangle (1,8);
\draw[ultra thick] (1,8) rectangle (2,9);
\draw[ultra thick] (2,9) rectangle (3,10);
\draw[ultra thick] (3,10) rectangle (4,11);
\draw[ultra thick] (0,6) rectangle (1,7);
\draw[ultra thick] (1,7) rectangle (2,8);
\draw[ultra thick] (2,8) rectangle (3,9);
\draw[ultra thick] (3,9) rectangle (4,10);
\draw[ultra thick] (4,10) rectangle (5,11);
\draw[ultra thick] (0,5) rectangle (1,6);
\draw[ultra thick] (1,6) rectangle (2,7);
\draw[ultra thick] (2,7) rectangle (3,8);
\draw[ultra thick] (3,8) rectangle (4,9);
\draw[ultra thick] (4,9) rectangle (5,10);
\draw[ultra thick] (0,4) rectangle (1,5);
\draw[ultra thick] (1,5) rectangle (2,6);
\draw[ultra thick] (2,6) rectangle (3,7);
\draw[ultra thick] (3,7) rectangle (4,8);
\draw[ultra thick] (4,8) rectangle (5,9);
\draw[ultra thick] (5,9) rectangle (6,10);
\draw[ultra thick] (0,3) rectangle (1,4);
\draw[ultra thick] (1,4) rectangle (2,5);
\draw[ultra thick] (2,5) rectangle (3,6);
\draw[ultra thick] (3,6) rectangle (4,7);
\draw[ultra thick] (4,7) rectangle (5,8);
\draw[ultra thick] (5,8) rectangle (6,9);
\draw[ultra thick] (0,2) rectangle (1,3);
\draw[ultra thick] (1,3) rectangle (2,4);
\draw[ultra thick] (2,4) rectangle (3,5);
\draw[ultra thick] (3,5) rectangle (4,6);
\draw[ultra thick] (4,6) rectangle (5,7);
\draw[ultra thick] (5,7) rectangle (6,8);
\draw[ultra thick] (6,8) rectangle (7,9);
\draw[ultra thick] (0,1) rectangle (1,2);
\draw[ultra thick] (1,2) rectangle (2,3);
\draw[ultra thick] (2,3) rectangle (3,4);
\draw[ultra thick] (3,4) rectangle (4,5);
\draw[ultra thick] (4,5) rectangle (5,6);
\draw[ultra thick] (5,6) rectangle (6,7);
\draw[ultra thick] (6,7) rectangle (7,8);
\draw[ultra thick] (0,0) rectangle (1,1);
\draw[ultra thick] (2,2) rectangle (3,3);
\draw[ultra thick] (4,4) rectangle (5,5);
\draw[ultra thick] (6,6) rectangle (7,7);
\draw[fill=white,draw=gray] (0,7) rectangle (1,8);
\draw[fill=white,draw=gray] (1,8) rectangle (2,9);
\draw[fill=white,draw=gray] (2,9) rectangle (3,10);
\draw[fill=white,draw=gray] (3,10) rectangle (4,11);
\draw[fill=white,draw=gray] (0,6) rectangle (1,7);
\draw[fill=white,draw=gray] (1,7) rectangle (2,8);
\draw[fill=white,draw=gray] (2,8) rectangle (3,9);
\draw[fill=white,draw=gray] (3,9) rectangle (4,10);
\draw[fill=white,draw=gray] (4,10) rectangle (5,11);
\draw[fill=white,draw=gray] (0,5) rectangle (1,6);
\draw[fill=white,draw=gray] (1,6) rectangle (2,7);
\draw[fill=white,draw=gray] (2,7) rectangle (3,8);
\draw[fill=white,draw=gray] (3,8) rectangle (4,9);
\draw[fill=white,draw=gray] (4,9) rectangle (5,10);
\draw[fill=white,draw=gray] (0,4) rectangle (1,5);
\draw[fill=white,draw=gray] (1,5) rectangle (2,6);
\draw[fill=white,draw=gray] (2,6) rectangle (3,7);
\draw[fill=white,draw=gray] (3,7) rectangle (4,8);
\draw[fill=white,draw=gray] (4,8) rectangle (5,9);
\draw[fill=white,draw=gray] (5,9) rectangle (6,10);
\draw[fill=white,draw=gray] (0,3) rectangle (1,4);
\draw[fill=white,draw=gray] (1,4) rectangle (2,5);
\draw[fill=white,draw=gray] (2,5) rectangle (3,6);
\draw[fill=white,draw=gray] (3,6) rectangle (4,7);
\draw[fill=white,draw=gray] (4,7) rectangle (5,8);
\draw[fill=white,draw=gray] (5,8) rectangle (6,9);
\draw[fill=white,draw=gray] (0,2) rectangle (1,3);
\draw[fill=white,draw=gray] (1,3) rectangle (2,4);
\draw[fill=white,draw=gray] (2,4) rectangle (3,5);
\draw[fill=white,draw=gray] (3,5) rectangle (4,6);
\draw[fill=white,draw=gray] (4,6) rectangle (5,7);
\draw[fill=white,draw=gray] (5,7) rectangle (6,8);
\draw[fill=white,draw=gray] (6,8) rectangle (7,9);
\draw[fill=white,draw=gray] (0,1) rectangle (1,2);
\draw[fill=white,draw=gray] (1,2) rectangle (2,3);
\draw[fill=white,draw=gray] (2,3) rectangle (3,4);
\draw[fill=white,draw=gray] (3,4) rectangle (4,5);
\draw[fill=white,draw=gray] (4,5) rectangle (5,6);
\draw[fill=white,draw=gray] (5,6) rectangle (6,7);
\draw[fill=white,draw=gray] (6,7) rectangle (7,8);
\draw[fill=white,draw=gray] (0,0) rectangle (1,1);
\draw[fill=white,draw=gray] (2,2) rectangle (3,3);
\draw[fill=white,draw=gray] (4,4) rectangle (5,5);
\draw[fill=white,draw=gray] (6,6) rectangle (7,7);
		\foreach\i\j in {0/0, 0/2, 0/4, 0/6, 1/6, 2/6, 3/4, 3/6, 4/4, 4/6, 5/6, 6/6}
		{
			\draw[line width=2pt,draw=blue] (\i,\j) rectangle (\i+1,\j+2);
		}
		\foreach\i\j in {1/2, 1/3, 1/4, 1/5, 1/8, 2/9, 3/8, 3/10, 4/9, 5/8}
		{
			\draw[line width=2pt,draw=blue] (\i,\j) rectangle (\i+2,\j+1);
		}
	\end{tikzpicture}
	\bigskip
	\caption{The Aztec triangle $\mathfrak{A}(4)$ and its domino tiling}\label{fig:aztec}
\end{figure}
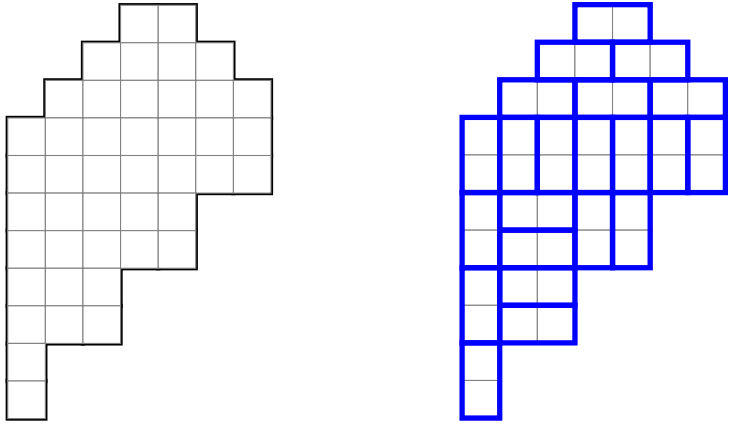

In \cite[Theorems~4.3 and 8.2]{DF2021}, Di Francesco formulated the enumeration of domino tilings of $\frakA(n)$, denoted by $\cyD(n)$, as a binomial determinant:
\begin{align}\label{eq:DF}
	\cyD(n) = \frac{1}{2}\cdot \underset{{0\le i,j\le n-1}}{\det} \left(2^{i}\binom{2j+i+1}{2j+1} - \binom{i-1}{2j+1}\right),
\end{align}
where we adopt the following convention for the \emph{binomial coefficients}:
\begin{align*}
	\binom{\alpha}{p}=
	\begin{cases}
		\frac{\alpha(\alpha-1)\cdots (\alpha-p+1)}{p!}, & \text{if $p\ge 0$},\\ 0, & \text{if $p<0$}.
	\end{cases}
\end{align*}
It is notable that the convention used in the original paper of Di Francesco~\cite{DF2021} was slightly different; he also put $\binom{\alpha}{p}=0$ for $-1\le \alpha< p$. From \eqref{eq:DF}, Di Francesco further conjectured a product formula \cite[p.~43, Conjecture~8.1]{DF2021} for $\cyD(n)$:
\begin{align}
	\cyD(n) = 2^{\frac{n(n-1)}{2}} \prod_{i=0}^{n-1} \frac{(4i+2)!}{(n+2i+1)!}.
\end{align}
This conjecture was presented by Di Francesco himself as a challenge at the 9th International Conference on \emph{Lattice Path Combinatorics and Applications}, and an affirmative answer by Christoph Koutschan was announced by Doron Zeilberger on the last day of the conference, June 25, 2021. Later on, Koutschan's proof was included as \cite[Sect.~4]{KKS2025}.

To connect Di Francesco's binomial determinant \eqref{eq:DF} with the KKS determinants given in \eqref{eq:KKS-det-1}, we first recall the work of Corteel, Huang and Krattenthaler~\cite{CHK2023} on domino tilings of generalized Aztec triangles, in which Di Francesco's setting is extended. In particular, it was shown in \cite[Theorem~4.1, eq.~(4.1)]{CHK2023} that for nonnegative integers $k$, the number $\cyD_{k}(n)$ of domino tilings of a certain generalized Aztec triangle (see Remark~\ref{rmk:CHK} for a concrete description) admits the product form
\begin{align}\label{eq:CHK}
	\cyD_{k}(n) = \prod_{j=1}^{k-1} (2j+1)^{j-k} \prod_{i\ge 0} \left(\prod_{s=-2k+4i+1}^{-k+2i} (2n+s) \prod_{s=k-2i}^{2k-4i-2}(2n+s)\right),
\end{align}
which reduces to Di Francesco's case of Aztec triangles by letting $k=n$. On the other hand, Koutschan, Krattenthaler and Schlosser showed in \cite[p.~12, Lemma~7]{KKS2025} that
\begin{align}\label{eq:KKS-cyD}
	\cyD_{n}(n+\delta) = \frac{1}{2}\cdot \underset{{0\le i,j\le n-1}}{\det} \left(2^{i}\binom{2j+i+2\delta+1}{2j+1} + \binom{2j-i+2\delta+1}{2j+1}\right),
\end{align}
which holds for nonnegative integers $\delta$. Although this determinant at $\delta=0$ looks seemingly different from Di Francesco's determinant in \eqref{eq:DF}, the equality between them is not a coincidence because $\binom{2j-i+1}{2j+1} = - \binom{i-1}{2j+1}$ for $i,j\ge 0$.
%

Now relating \eqref{eq:CHK} and \eqref{eq:KKS-cyD} suggests the following product formula for a subclass of KKS determinants for integers $\kappa$ \cite[p.~12, Theorem~6, eq.~(5.3)]{KKS2025}:
\begin{align}
	&\underset{{0\le i,j\le n-1}}{\det} \left(2^{i}\binom{2j+i+\kappa+1}{2j+1} + \binom{2j-i+\kappa+1}{2j+1}\right)\notag\\
	&\qquad\qquad = 2^{n(n-1)+1} \prod_{i=1}^n \frac{\Gamma(i)\Gamma(2i+\kappa)\Gamma(4i+\kappa-1)\Gamma(\frac{3i+\kappa-2}{2})}{\Gamma(2i)\Gamma(3i+\kappa)\Gamma(3i+\kappa-2)\Gamma(\frac{i+\kappa}{2})},
\end{align}
where $\Gamma(x)$ is the \emph{gamma function}. Therefore, the counting function $\cyD_{n}(n+\delta)$ for domino tilings forms a subclass of KKS determinants in a combinatorial context; this is especially adorable because of the melody played by the above \emph{product}-like evaluation.

Noting also that after certain modifications, some of their product formulas (\cite[p.~12, Theorem~6, eq.~(5.3)]{KKS2025} and \cite[p.~22, Theorem~15, eq.~(8.9)]{KKS2025}) appear coincidentally in the context of $(-1)$-enumerations of arrowed Gelfand--Tsetlin patterns introduced by Fischer and Schreier-Aigner (\cite[p.~3, Theorem~1]{FSA2024} and \cite[p.~4, Theorem~2]{FSA2024}), Koutschan, Krattenthaler and Schlosser proposed a request at the end of their paper \cite[p.~32]{KKS2025} for combinatorial interpretations of their determinants. Here we remark that the connection between KKS determinants and Fischer and Schreier-Aigner's arrowed Gelfand--Tsetlin patterns is still unclear.

The question of Koutschan, Krattenthaler and Schlosser sets up the \emph{overture} of our work.

\subsection{Main results}

From now on, our attention is restricted to two subclasses of KKS determinants with the number of free parameters reduced from six to three:
\begin{align}\label{eq:KKS-sub-D}
	\underset{{0\le i,j\le n-1}}{\det} \left(l^{i}\binom{mj+i+a}{mj+a} + \binom{mj-i+a}{mj+a}\right)
\end{align}
and
\begin{align}\label{eq:KKS-sub-H}
	\underset{{0\le i,j\le n-1}}{\det} \left(l^{i+1}\binom{mj+i+a+1}{mj+a+1} + \binom{mj-i+a-1}{mj+a+1}\right).
\end{align}
To facilitate our analysis, we introduce the determinant
\begin{align}\label{eq:det-B-def}
	\cyB_{a,b,c,d}^{(m,l)}(n) := \underset{{0\le i,j\le n-1}}{\det} \big(\cyb_{a,b,c,d}^{(m,l)}(i,j)\big),
\end{align}
where
\begin{align*}
	\cyb_{a,b,c,d}^{(m,l)}(i,j) := l^{j+b}\binom{mi+j+c}{mi+a} + \binom{mi-j+d}{mi+a}.
\end{align*}
It is clear that this determinant has the same value as the KKS determinant in \eqref{eq:KKS-det-1} because the associated matrices are transposes of each other.

For the combinatorial side inquired by Koutschan, Krattenthaler and Schlosser, we focus on two different objects --- \emph{domino tilings} and \emph{nonintersecting lattice paths} --- to be introduced in Sect.~\ref{sec:tilings-paths-def}, so as to interpret the subclasses of KKS determinants given in \eqref{eq:KKS-sub-D} and \eqref{eq:KKS-sub-H}. Undoubtedly, the two subclasses may also be explained by other combinatorial objects such as sequences of partitions and super symplectic tableaux. However, the bijections between them and domino tilings or nonintersecting lattice paths can be easily constructed, as shown in \cite{CHK2023}, and therefore we will not spill ink on them in this work.

Briefly speaking, we are going to count domino tilings of two types of domains generalized by Di Francesco's Aztec triangles \cite{DF2021}. In particular, such domains will be characterized by an arithmetic progression $(s(n-1)+r,s(n-2)+r,\ldots,r)$ for certain $s,r\in \mathbb{N}$. Furthermore, we assign three weights $w_1,w_2,w_3$ to different types of dominoes. Such weighted enumerations of domino tilings of the two types of domains are denoted by $\cyD_{w_1,w_2,w_3}^{(s,r)}(n)$ and $\hcyD_{w_1,w_2,w_3}^{(s,r)}(n)$, respectively; see \eqref{eq:cyD-def} and \eqref{eq:hcyD-def} for their concrete definitions.

Meanwhile, in light of a bijection due to Corteel, Huang and Krattenthaler~\cite{CHK2023}, we will also consider nonintersecting Delannoy and H-Delannoy paths, respectively, while these nonintersecting paths will be characterized by the same arithmetic progression with parameters $s,r\in \mathbb{N}$. Once again, we assign weights $w_1,w_2,w_3$ to three types of steps in the paths and conduct similar weighted enumerations by the counting functions $\cyL_{w_1,w_2,w_3}^{(s,r)}(n)$ and $\hcyL_{w_1,w_2,w_3}^{(s,r)}(n)$ defined in \eqref{eq:cyL-def} and \eqref{eq:hcyL-def}, respectively.

The main purpose of the present work is to offer combinatorial interpretations to the two subclasses \eqref{eq:KKS-sub-D} and \eqref{eq:KKS-sub-H} of KKS determinants by the aforementioned weighted enumerating functions. More precisely, we are going to establish the following two families of relations (see also Theorems~\ref{th:D-L} and \ref{th:L-B}):

\begin{theorem}\label{th:main-d}
	For $m\ge 1$ and $a\ge 0$,
	\begin{align}\label{eq:main-d}
		\cyD_{1,l-1,1}^{(m-1,a)}(n) = \cyL_{1,l-1,1}^{(m-1,a)}(n) = \tfrac{1}{2} \cyB_{a,0,a,a}^{(m,l)}(n),
	\end{align}
	where according to \eqref{eq:det-B-def},
	\begin{align*}
		\cyB_{a,0,a,a}^{(m,l)}(n) = \underset{{0\le i,j\le n-1}}{\det} \left(l^{j}\binom{mi+j+a}{mi+a} + \binom{mi-j+a}{mi+a}\right).
	\end{align*}
\end{theorem}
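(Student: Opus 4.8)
The plan is to split \eqref{eq:main-d} into its two links, $\cyD_{1,l-1,1}^{(m-1,a)}(n)=\cyL_{1,l-1,1}^{(m-1,a)}(n)$ and $\cyL_{1,l-1,1}^{(m-1,a)}(n)=\tfrac12\cyB_{a,0,a,a}^{(m,l)}(n)$, which are exactly the two constituent statements referenced as Theorems~\ref{th:D-L} and \ref{th:L-B}. The first link is bijective, passing from domino tilings to nonintersecting lattice paths, while the second is enumerative, passing from nonintersecting lattice paths to a determinant via the Lindstr\"om--Gessel--Viennot (LGV) lemma. Throughout I would fix the dictionary $(s,r)=(m-1,a)$ for the shape and $(w_1,w_2,w_3)=(1,l-1,1)$ for the weights, so that the three quantities carry identical data.

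For the first link I would set up a weight-preserving bijection, in the spirit of Corteel--Huang--Krattenthaler~\cite{CHK2023}, between domino tilings of the Aztec-type domain associated with the arithmetic progression $((m-1)(n-1)+a,\ldots,a)$ and families of $n$ nonintersecting Delannoy paths sharing the same boundary data. The crux is that under the standard correspondence each of the three relevant domino types maps to exactly one of the three step types (horizontal, diagonal, vertical), so that the weights $w_1,w_2,w_3$ propagate step by step and the total weight of a tiling equals that of the associated path family. It then remains to check that disjointness of dominoes is equivalent to the nonintersecting condition on the paths and that the boundary of the domain pins down the prescribed sources and sinks; this yields the equality $\cyD_{1,l-1,1}^{(m-1,a)}(n)=\cyL_{1,l-1,1}^{(m-1,a)}(n)$.

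For the second link I would position the $n$ paths with sources $A_i$ and sinks $E_j$ read off from the progression and apply the LGV lemma, expressing $\cyL_{1,l-1,1}^{(m-1,a)}(n)$ as a determinant whose $(i,j)$ entry is the weighted number of a single path from $A_i$ to $E_j$. A closed evaluation of these weighted Delannoy numbers, combined with a reflection across the boundary of the ambient region, should turn each entry into $l^{j}\binom{mi+j+a}{mi+a}+\binom{mi-j+a}{mi+a}$, the first summand counting direct paths and the second their reflected images. The overall factor $\tfrac12$ would be produced by the symmetrization inherent in this two-term reflected model, precisely as it already appears in Di Francesco's determinant \eqref{eq:DF} and in \eqref{eq:KKS-cyD}.

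I expect the principal difficulty to sit in this second link, namely in matching the weighted single-path generating function to the clean binomial entry $l^{j}\binom{mi+j+a}{mi+a}+\binom{mi-j+a}{mi+a}$. Producing the reflected term with a plus sign, in contrast to the minus sign of a naive Andr\'e reflection, and identifying the correct global constant $\tfrac12$ demand careful tracking of the source and sink positions and of how the diagonal weight $w_2=l-1$ interacts with the reflection. A secondary subtlety is to confirm that the LGV nonintersecting condition corresponds exactly to disjointness of the dominoes under the first-link bijection, so that no spurious crossing families are counted on either side.
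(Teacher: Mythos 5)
Your first link is exactly the paper's: the weight-preserving bijection of Corteel--Huang--Krattenthaler (Lemma~\ref{le:bij-1}), sending $\frakD_1,\frakD_2,\frakD_3$ to east, north and northeast steps respectively, yields Theorem~\ref{th:D-L} and hence $\cyD_{1,l-1,1}^{(m-1,a)}(n)=\cyL_{1,l-1,1}^{(m-1,a)}(n)$. No issue there.

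The genuine gap is in your second link. Your plan presumes an \emph{entry-wise} identity: that the LGV entry, namely the weighted Delannoy number $D_{1,l-1,1}(mi-j+a,\,j)$ from \eqref{eq:det-D}, can be rewritten via reflection (up to a global constant $\tfrac12$) as $l^{j}\binom{mi+j+a}{mi+a}+\binom{mi-j+a}{mi+a}$. No such identity holds. Take $m=1$, $a=0$, $l=2$ and $(i,j)=(0,1)$: the LGV entry is $D_{1,1,1}(-1,1)=0$ (no Delannoy path moves left), while the KKS entry is $2\binom{1}{0}+\binom{-1}{0}=3$; no sign convention or uniform scalar reconciles $0$ with $3$. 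The two matrices have equal determinants (up to the factor $2$) only because they differ by right multiplication by a unit upper-triangular matrix, i.e., by column operations that genuinely mix columns; a reflection is a path-by-path involution and can only ever produce entry-wise identities, so it cannot supply this mixing. This is precisely what the paper's proof of Theorem~\ref{th:L-B} provides instead: Lemma~\ref{le:right-action} (invariance of the determinant under $F(u,v)\mapsto\alpha(v)\,F(u,v\beta(v))$ for unit-constant-term $\alpha,\beta$), applied with $\alpha(v)=\frac{1-lv^2}{(1-v)(1-lv)}$ and $\beta(v)=\frac{1}{(1-v)(1-lv)}$, combined with the algebraic identity \eqref{eq:P-B-relation}, which converts the generating function of the Delannoy entries into $B_{a,0,a,a}^{(m,l)}(u,v)-\frac{1}{1-u}$. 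Relatedly, the factor $\tfrac12$ is not a symmetrization constant: it arises because the correction $-\frac{1}{1-u}$ subtracts $\delta_{0,j}$ from the entries, and since $\cyb_{a,0,a,a}^{(m,l)}(i,0)=2$ for every $i$, subtracting $1$ exactly halves the $j=0$ column, giving $\underset{0\le i,j\le n-1}{\det}\big(\cyb_{a,0,a,a}^{(m,l)}(i,j)-\delta_{0,j}\big)=\tfrac12\,\cyB_{a,0,a,a}^{(m,l)}(n)$. To repair your argument you would need to reproduce this generating-function/column-operation step (or find a combinatorial account of it, e.g.\ interpreting the substitution $v\mapsto v\beta(v)$ as freeing the path sinks), which is substantially more than an Andr\'e-type reflection.
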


\begin{theorem}\label{th:main-h}
	For $m\ge 1$ and $a\ge 0$,
	\begin{align}\label{eq:main-h}
		\hcyD_{1,l-1,1}^{(m-1,a)}(n) = \hcyL_{1,l-1,1}^{(m-1,a)}(n) = \cyB_{a+1,1,a+1,a-1}^{(m,l)}(n),
	\end{align}
	where according to \eqref{eq:det-B-def},
	\begin{align*}
		\cyB_{a+1,1,a+1,a-1}^{(m,l)}(n) =  \underset{{0\le i,j\le n-1}}{\det} \left(l^{j+1}\binom{mi+j+a+1}{mi+a+1} + \binom{mi-j+a-1}{mi+a+1}\right).
	\end{align*}
\end{theorem}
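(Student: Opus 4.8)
The plan is to prove the two asserted equalities separately, in exact parallel with the treatment of Theorem~\ref{th:main-d}. The left equality $\hcyD_{1,l-1,1}^{(m-1,a)}(n) = \hcyL_{1,l-1,1}^{(m-1,a)}(n)$ is purely combinatorial and will follow from the ``hat'' case of Theorem~\ref{th:D-L}, whereas the right equality $\hcyL_{1,l-1,1}^{(m-1,a)}(n) = \cyB_{a+1,1,a+1,a-1}^{(m,l)}(n)$ is the content of the ``hat'' case of Theorem~\ref{th:L-B}. Thus the whole statement reduces to establishing those two intermediate theorems for the shifted domain, after which Theorem~\ref{th:main-h} is obtained by chaining them.

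For the first equality I would set up the domino-to-path correspondence in the style of Corteel, Huang and Krattenthaler~\cite{CHK2023}: overlay the relevant Aztec-type domain with a checkerboard coloring, promote the centers of the same-colored cells to lattice nodes, and read off from each domino a step of one of three prescribed types. Under this correspondence a domino tiling of the domain maps bijectively to a family of nonintersecting H-Delannoy paths, and the three step types carry precisely the weights $(w_1,w_2,w_3)=(1,l-1,1)$, so the bijection is weight preserving and yields $\hcyD_{1,l-1,1}^{(m-1,a)}(n) = \hcyL_{1,l-1,1}^{(m-1,a)}(n)$. The only genuine bookkeeping is that the ``hat'' domain differs from the domain of Theorem~\ref{th:main-d} by a one-unit displacement of its upper boundary; I would check that this displacement translates into the correct shift of the starting and ending points of the paths, both being governed by the arithmetic progression with parameters $s=m-1$ and $r=a$.

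For the second equality I would invoke the Lindström--Gessel--Viennot lemma. First one must verify that the sources $A_0,\dots,A_{n-1}$ and sinks $B_0,\dots,B_{n-1}$ forced by the hat-domain are compatible, i.e.\ that every nonintersecting family necessarily joins $A_j$ to $B_j$, so that only the identity permutation survives and the signed path sum collapses to a genuine determinant. Granting this, $\hcyL_{1,l-1,1}^{(m-1,a)}(n)$ equals the determinant whose $(i,j)$ entry is the weighted number of single H-Delannoy paths from the $j$-th source to the $i$-th sink, and it then remains to evaluate this single-path count in closed form and match it against
\begin{align*}
	\cyb_{a+1,1,a+1,a-1}^{(m,l)}(i,j) = l^{j+1}\binom{mi+j+a+1}{mi+a+1} + \binom{mi-j+a-1}{mi+a+1}.
\end{align*}

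The hard part will be this last single-path evaluation. I expect the two summands to arise from the reflection principle: the term $l^{j+1}\binom{mi+j+a+1}{mi+a+1}$ counts the weighted paths with no boundary constraint, while $\binom{mi-j+a-1}{mi+a+1}$ records, with the appropriate sign, the contribution of the paths reflected across the wall of the half-plane in which the paths are confined. Two points are delicate. First, the power $l^{j+1}$ should emerge from summing the diagonal-step weight $l-1$ over its admissible positions via the binomial theorem $\sum_{k}\binom{j+1}{k}(l-1)^k = l^{j+1}$, the exponent $j+1$ (rather than the $j$ of Theorem~\ref{th:main-d}) reflecting the one extra diagonal-eligible slot created by the hat-shift. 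Second, one must pin down the exact index shifts --- the two $+1$'s in the first binomial and the $a-1$ in the reflected one --- which encode the precise placement of the sources and sinks next to the wall. These shifts are exactly what distinguishes the present case from Theorem~\ref{th:main-d}, so although the computation is structurally identical, it must be redone with the displaced endpoints carefully tracked.
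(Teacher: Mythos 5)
Your reduction of the statement to the two intermediate theorems is exactly the paper's structure, and your treatment of the first equality $\hcyD_{1,l-1,1}^{(m-1,a)}(n) = \hcyL_{1,l-1,1}^{(m-1,a)}(n)$ (the weight-preserving CHK-style bijection between dominoes and H-Delannoy steps, i.e.\ the hat case of Theorem~\ref{th:D-L}) matches the paper. The LGV step is also correct and is precisely \eqref{eq:det-H}: $\hcyL_{1,l-1,1}^{(m-1,a)}(n) = \det_{0\le i,j\le n-1}\big(H_{1,l-1,1}(mi-j+a,j)\big)$.

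However, your plan for the remaining step contains a genuine gap: you propose to evaluate the single-path count $H_{1,l-1,1}(mi-j+a,j)$ in closed form and show it \emph{equals} the KKS entry $\cyb_{a+1,1,a+1,a-1}^{(m,l)}(i,j)$, with the two binomial terms explained by a reflection principle. This entry-wise identity is false, so no amount of careful index bookkeeping or reflection argument can establish it. Concretely, take $m=1$, $a=0$, $l=2$ and $(i,j)=(0,1)$: the LGV entry is $H_{1,1,1}(-1,1)=0$, since the target point has negative abscissa and no path exists, whereas the KKS entry is
\begin{align*}
	2^{2}\binom{2}{1} + \binom{-2}{1} = 8 - 2 = 6.
\end{align*}
More generally the LGV matrix has a large triangle of vanishing entries (whenever $j > mi+a$) which the KKS matrix does not, so the two matrices can only agree at the level of determinants, not entries. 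The missing idea is the column-operation mechanism of Lemma~\ref{le:right-action}: a substitution $F(u,v)\mapsto \alpha(v)F\big(u,v\beta(v)\big)$ with $\alpha,\beta$ of constant term $1$ amounts to right-multiplying the coefficient matrix by a unipotent upper triangular matrix, hence preserves all leading principal minors. The paper applies this with $\widehat{G}(u,v)=u^{-a}(l-1+u)/(1-u-(l-1)uv-u^2v)$, the substitution $v\mapsto v/((1-v)(1-lv))$ and prefactor $(1-lv^2)/((1-v)^2(1-lv)^2)$, then uses the $m$-dissection identities \eqref{eq:H-sr-gf} and \eqref{eq:B-gf} (together with the vanishing, for $a\ge 0$, of the regular part of the leftover term) to identify the transformed generating function with $B_{a+1,1,a+1,a-1}^{(m,l)}(u,v)$. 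Without this (or an equivalent unipotent transformation connecting the two matrices), your outline cannot be completed.
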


It is fair to admit that our initial goal was to search for nice \emph{product formulas} for the weighted enumerations of domino tilings of Aztec-type domains. Fortunately, with the results of Koutschan, Krattenthaler and Schlosser in \cite{KKS2025}, the following identities are clear.

\begin{corollary}
	For all $n\ge 1$,
	\begin{align}
		\cyD_{1,1,1}^{(1,0)}(n) &= \tfrac{1}{2} \cyB_{0,0,0,0}^{(2,2)}(n) = 2^{n(n-1)} \prod_{i=1}^n \frac{\Gamma(i)\Gamma(2i)\Gamma(4i-3)\Gamma(\frac{3i-1}{2})}{\Gamma(2i-1)\Gamma(3i-1)\Gamma(3i-2)\Gamma(\frac{i+1}{2})},\label{eq:D-111-10}\\
		\cyD_{1,1,1}^{(1,1)}(n) &= \tfrac{1}{2} \cyB_{1,0,1,1}^{(2,2)}(n) = 2^{n(n-1)} \prod_{i=1}^n \frac{\Gamma(i)\Gamma(4i-1)\Gamma(\frac{3i-2}{2})}{\Gamma(3i)\Gamma(3i-2)\Gamma(\frac{i}{2})},\label{eq:D-111-11}\\
		\cyD_{1,3,1}^{(1,1)}(n) &= \tfrac{1}{2} \cyB_{1,0,1,1}^{(2,4)}(n) = 3^{\frac{1}{2}n(n-1)} \prod_{i=1}^{n} \frac{\Gamma(3i-1) \Gamma(\frac{i+1}{2})}{\Gamma(2i) \Gamma(\frac{3i-1}{2})},\label{eq:D-131-11}\\
		\cyD_{1,2,1}^{(2,0)}(n) &= \tfrac{1}{2} \cyB_{0,0,0,0}^{(3,3)}(n) = 2^{\frac{1}{2}n(n-1)} \prod_{i=1}^{n} \frac{\Gamma(4i-3) \Gamma(\frac{i+1}{3})}{\Gamma(3i-2) \Gamma(\frac{4i-2}{3})},\label{eq:D-121-20}\\
		\cyD_{1,2,1}^{(2,1)}(n) &= \tfrac{1}{2} \cyB_{1,0,1,1}^{(3,3)}(n) = 2^{\frac{1}{2}n(n-3)} 3^{-n}\prod_{i=1}^n \frac{\Gamma(4i-1) \Gamma(\frac{i}{3})}{\Gamma(3i-1) \Gamma(\frac{4i}{3})},\label{eq:D-121-21}\\
		\cyD_{1,2,1}^{(2,2)}(n) &= \tfrac{1}{2} \cyB_{2,0,2,2}^{(3,3)}(n) = 2^{\frac{1}{2}n(n-5)} \prod_{i=1}^n \frac{\Gamma(4i+1) \Gamma(\frac{i+2}{3})}{\Gamma(3i+1) \Gamma(\frac{4i+2}{3})}.\label{eq:D-121-22}
	\end{align}
	In addition,
	\begin{align}
		\hcyD_{1,1,1}^{(1,0)}(n) &= \cyB_{1,1,1,-1}^{(2,2)}(n) = 2^{n(n-1)} \prod_{i=1}^n \frac{\Gamma(i)\Gamma(4i-1)\Gamma(\frac{3i-2}{2})}{\Gamma(3i)\Gamma(3i-2)\Gamma(\frac{i}{2})},\label{eq:H-111-10}\\
		\hcyD_{1,1,1}^{(1,1)}(n) &= \cyB_{2,1,2,0}^{(2,2)}(n) = \prod_{i=1}^n \frac{\Gamma(4i)\Gamma(\frac{i+2}{2})}{\Gamma(3i)\Gamma(\frac{3i+2}{2})},\label{eq:H-111-11}\\
		\hcyD_{1,3,1}^{(1,0)}(n) &= \cyB_{1,1,1,-1}^{(2,4)}(n) = 3^{\frac{1}{2}n(n+1)} \prod_{i=1}^{n} \frac{\Gamma(3i-1) \Gamma(\frac{i+1}{2})}{\Gamma(2i) \Gamma(\frac{3i-1}{2})}.\label{eq:H-131-10}
	\end{align}
\end{corollary}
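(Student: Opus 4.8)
The plan is to read each displayed line as a chain of two equalities and to settle the two halves by entirely different mechanisms. The \emph{first} equality in every line is a direct specialization of Theorem~\ref{th:main-d} or Theorem~\ref{th:main-h}: reading the weight triple $(1,l-1,1)$ and the superscript $(m-1,a)$ off the left-hand side pins down $l$, $m$ and $a$, and substituting these into \eqref{eq:main-d} or \eqref{eq:main-h} yields exactly the advertised $\cyB$-determinant. For example, in \eqref{eq:D-111-11} one has $l-1=1$, $m-1=1$ and $a=1$, so Theorem~\ref{th:main-d} gives $\cyD_{1,1,1}^{(1,1)}(n)=\tfrac12\cyB_{1,0,1,1}^{(2,2)}(n)$; the same bookkeeping disposes of \eqref{eq:D-111-10}, \eqref{eq:D-131-11} and \eqref{eq:D-121-20}--\eqref{eq:D-121-22} through Theorem~\ref{th:main-d}, and of \eqref{eq:H-111-10}--\eqref{eq:H-131-10} through Theorem~\ref{th:main-h}. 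Thus the entire combinatorial content of the corollary is already carried by the two main theorems, and what is left is the purely algebraic task of evaluating nine explicit binomial determinants.

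For the \emph{second} equality in each line I would invoke the determinant evaluations of Koutschan, Krattenthaler and Schlosser. Since $\cyB_{a,b,c,d}^{(m,l)}(n)$ coincides with the KKS determinant \eqref{eq:KKS-det-1} (the two matrices being transposes of one another), every one of the nine $\cyB$-values lands in a subclass for which \cite{KKS2025} records a product formula. The cleanest instance is \eqref{eq:D-111-11}: specializing the $\kappa$-evaluation quoted above (that is, $\cyB_{1,0,\kappa+1,\kappa+1}^{(2,2)}$) at $\kappa=0$, multiplying by $\tfrac12$ so that $\tfrac12\cdot 2^{n(n-1)+1}=2^{n(n-1)}$, and cancelling the common factor $\Gamma(2i)$, returns precisely $2^{n(n-1)}\prod_{i=1}^{n}\frac{\Gamma(i)\Gamma(4i-1)\Gamma(\frac{3i-2}{2})}{\Gamma(3i)\Gamma(3i-2)\Gamma(\frac{i}{2})}$. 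The remaining parameter choices, with $(m,l)$ equal to $(2,2)$, $(2,4)$ and $(3,3)$, are matched against the corresponding members of the KKS product theorems in the same spirit; because those evaluations are themselves expressed through $\Gamma$-quotients (already carrying the half-integer arguments for $m=2$ and the third-integer arguments for $m=3$), the matching reduces to substituting the integer parameters and performing elementary cancellation and index-shifting of $\Gamma$-factors, with the Gauss multiplication formula available should any argument need to be folded or unfolded.

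The genuinely non-routine point is that two of the KKS evaluations required here were only \emph{conjectured} in \cite{KKS2025} --- these are precisely the two enumerations flagged in the abstract --- so the associated product formulas cannot simply be cited. I would therefore isolate the two offending $\cyB$-determinants and establish their product evaluations from scratch, which is exactly what the later sections accomplish by coupling the holonomic Ansatz with the modular-reduction approach to creative telescoping: one proposes a candidate cofactor (normalized-determinant) certificate, verifies the governing recurrences by running creative telescoping modulo several primes and reconstructing the rational certificate by Chinese remaindering, and then checks the finitely many base cases. This last step is where I expect essentially all the difficulty to sit. The $\Gamma$-function bookkeeping of the preceding paragraph is elementary once the evaluations are in hand, whereas proving the two conjectural evaluations is the computational heart of the matter, and it is the sole obstacle standing between Theorems~\ref{th:main-d} and \ref{th:main-h} and the closed product forms collected in the corollary.
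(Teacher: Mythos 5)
Your treatment of the first equalities (reading off $l$, $m$, $a$ and specializing Theorems~\ref{th:main-d} and \ref{th:main-h}) and of the second equalities (citing the product evaluations of Koutschan--Krattenthaler--Schlosser via the transpose identification of $\cyB_{a,b,c,d}^{(m,l)}(n)$ with the determinant \eqref{eq:KKS-det-1}) is correct, and this is exactly the paper's proof: each of the nine product formulas is matched to a specific \emph{proven} equation of \cite{KKS2025}, namely eqs.~(5.11), (5.3), (8.1), (7.6)+(7.9)+(7.13), (7.7), (7.8), (6.6), (6.7), and (8.6)+(8.3) there. Your worked instance for \eqref{eq:D-111-11} (setting $\kappa=0$, halving, and cancelling $\Gamma(2i)$) is also correct.

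However, your final paragraph contains a genuine error: you assert that two of the evaluations required for this corollary were only \emph{conjectured} in \cite{KKS2025} and must therefore be established from scratch by the holonomic Ansatz with modular reduction. That is false for this corollary. Every $\cyB$-determinant appearing here has $(m,l)\in\{(2,2),(2,4),(3,3)\}$, and all of these fall under determinant evaluations that are proved in \cite{KKS2025}. The two conjectural evaluations (eqs.~(10.8) and (10.9) of \cite{KKS2025}) concern the parameters $(m,l)=(4,2)$, i.e.\ $\cyB_{3,0,3,3}^{(4,2)}(n)$ and $\cyB_{2,1,2,0}^{(4,2)}(n)$, which correspond to $\cyD_{1,1,1}^{(3,3)}(n)$ and $\hcyD_{1,1,1}^{(3,1)}(n)$; these form the separate Theorem~\ref{th:det-conj}, proved in Sect.~\ref{sec:KKS-conj}, and are not part of this corollary. (You may have conflated $\cyB_{2,1,2,0}^{(2,2)}(n)$ in \eqref{eq:H-111-11}, covered by the proven eq.~(6.7) of \cite{KKS2025}, with the conjectural $\cyB_{2,1,2,0}^{(4,2)}(n)$ --- same subscripts, different superscript.) Consequently the heavy computational program you outline is not needed here, and had you tried to carry it out you would have found no ``offending'' determinants among the nine: once Theorems~\ref{th:main-d} and \ref{th:main-h} are in place, the corollary follows purely by citation.
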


Here \eqref{eq:D-111-10} comes from \cite[eq.~(5.11) with $x=0$]{KKS2025}; \eqref{eq:D-111-11} comes from \cite[eq.~(5.3) with $x=0$]{KKS2025}; \eqref{eq:D-131-11} comes from \cite[eq.~(8.1)]{KKS2025}; \eqref{eq:D-121-20} comes from \cite[eqs.~(7.9)+(7.13)+(7.6)]{KKS2025}; \eqref{eq:D-121-21} comes from \cite[eq.~(7.7)]{KKS2025}; and \eqref{eq:D-121-22} comes from \cite[eq.~(7.8)]{KKS2025}. Meanwhile, \eqref{eq:H-111-10} comes from \cite[eq.~(6.6)]{KKS2025}; \eqref{eq:H-111-11} comes from \cite[eq.~(6.7)]{KKS2025}; and \eqref{eq:H-131-10} comes from \cite[eqs.~(8.6)+(8.3)]{KKS2025}.

It is notable that among the KKS determinants associated to our combinatorial context, there are still two \emph{conjectural} product formulas raised by Koutschan, Krattenthaler and Schlosser, as recorded in \cite[eq.~(10.8)]{KKS2025} and \cite[eq.~(10.9)]{KKS2025}, respectively. Therefore, the ultimate goal of this work is to confirm the two conjectural evaluations, which will be shown in Sect.~\ref{sec:KKS-conj}.

\begin{theorem}\label{th:det-conj}
	For all $n\ge 1$,
	\begin{align}
		\cyD_{1,1,1}^{(3,3)}(n) = \tfrac{1}{2} \cyB_{3,0,3,3}^{(4,2)}(n) = \prod_{i=1}^{n} \frac{\Gamma(6i-1) \Gamma(\frac{i+3}{4})}{\Gamma(5i) \Gamma(\frac{5i-1}{4})},
	\end{align}
	and
	\begin{align}
		\hcyD_{1,1,1}^{(3,1)}(n) = \cyB_{2,1,2,0}^{(4,2)}(n) = \frac{\Gamma(n+1)}{\Gamma(2n+1)} \prod_{i=1}^{n} \frac{\Gamma(6i-1) \Gamma(\frac{i+2}{4})}{\Gamma(5i-1) \Gamma(\frac{5i-2}{4})}.
	\end{align}
\end{theorem}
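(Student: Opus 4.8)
The plan is to prove the two conjectural determinant evaluations in Theorem~\ref{th:det-conj} by the \emph{holonomic Ansatz} of Zeilberger, as refined and automated by Koutschan. By Theorems~\ref{th:main-d} and \ref{th:main-h} the combinatorial quantities $\cyD_{1,1,1}^{(3,3)}(n)$ and $\hcyD_{1,1,1}^{(3,1)}(n)$ already equal the stated KKS determinants $\tfrac12\cyB_{3,0,3,3}^{(4,2)}(n)$ and $\cyB_{2,1,2,0}^{(4,2)}(n)$, so the entire burden reduces to evaluating these two specific instances of $\cyB_{a,b,c,d}^{(m,l)}(n)$ at $(m,l)=(4,2)$ in closed product form. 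Let me treat the first determinant; the second is handled in exactly the same way. Write $M_n=\big(\cyb(i,j)\big)_{0\le i,j\le n-1}$ for the relevant matrix with entries $\cyb(i,j)=2^{j}\binom{4i+j+3}{4i+3}+\binom{4i-j+3}{4i+3}$, and let $P(n)=\det M_n$ be the conjectured right-hand side $2\prod_{i=1}^n \Gamma(6i-1)\Gamma(\tfrac{i+3}{4})/(\Gamma(5i)\Gamma(\tfrac{5i-1}{4}))$.

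The holonomic Ansatz replaces the global determinant identity $\det M_n = P(n)$ by a statement about a single ``certificate'' sequence $c_{n,j}$ that encodes the cofactor structure. Concretely, I would first guess (using the actual numerical values of the determinants for small $n$, via rational reconstruction/linear algebra) a conjectural expression for the normalized last row of the inverse, i.e.\ a bivariate sequence $c_{n,j}$, normalized by $c_{n,n-1}=1$, satisfying the defining relations $\sum_{j=0}^{n-1} c_{n,j}\,\cyb(i,j)=0$ for $0\le i\le n-2$ together with $\sum_{j=0}^{n-1} c_{n,j}\,\cyb(n-1,j)=P(n)/P(n-1)$. Equivalently one guesses $c_{n,j}$ as a holonomic (hypergeometric-times-polynomial or $P$-recursive) sequence in $(n,j)$ and then must verify three families of identities: (i) the normalization $c_{n,n-1}=1$; (ii) the vanishing of $\sum_j c_{n,j}\cyb(i,j)$ for $i<n-1$; and (iii) that the quotient $\sum_j c_{n,j}\cyb(n-1,j)$ matches the hypergeometric ratio $P(n)/P(n-1)$. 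By the holonomic Ansatz (Zeilberger--Koutschan), these three verifications together imply $\det M_n=P(n)$ for all $n$, since they certify that the guessed $c_{n,j}$ is indeed the bottom row of $M_n^{-1}$ up to the correct scalar and that expanding $\det M_n$ along that combination produces the product formula recursively.

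The heart of the matter — and the step I expect to be the main obstacle — is verifying identity (ii), the defining summation $\sum_{j=0}^{n-1} c_{n,j}\,\cyb(i,j)=0$, because here the \emph{parameter $i$ is free} and ranges over $0\le i\le n-2$, so one needs a genuinely bivariate (in $n$ and $i$) creative-telescoping certificate for a sum whose summand mixes two binomial coefficients with the arithmetic progression step $m=4$ baked into $4i$. Standard creative telescoping (Zeilberger's algorithm / Koutschan's \texttt{HolonomicFunctions}) in principle produces the required recurrences, but for these $m=4$, multi-binomial summands the intermediate Gr\"obner-basis and rational-function computations blow up catastrophically in both size and time; a direct run is infeasible. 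The remedy, signaled in the abstract, is \emph{modular reduction for creative telescoping}: I would run the telescoping computation over several large primes $p$, reconstruct the telescoper and certificate by rational number reconstruction / Chinese remaindering, and thereby obtain the exact operators at a fraction of the cost. Once the telescoper annihilating the sum is in hand, checking that the summed relation reduces to $0$ (for (ii)) and to the correct $P(n)/P(n-1)$ ratio (for (iii)) becomes a finite verification of a low-order recurrence against finitely many initial values, and the normalization (i) is a routine closed-form evaluation of $c_{n,n-1}$.

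For the second determinant $\cyB_{2,1,2,0}^{(4,2)}(n)$ with entries $\cyb(i,j)=2^{j+1}\binom{4i+j+3}{4i+3}+\binom{4i-j+1}{4i+3}$ and target $P(n)=\tfrac{\Gamma(n+1)}{\Gamma(2n+1)}\prod_{i=1}^n \Gamma(6i-1)\Gamma(\tfrac{i+2}{4})/(\Gamma(5i-1)\Gamma(\tfrac{5i-2}{4}))$, I would repeat the identical three-part holonomic-Ansatz scheme with a freshly guessed certificate $c_{n,j}$; the only novelty is bookkeeping the extra prefactor $\Gamma(n+1)/\Gamma(2n+1)$ in the hypergeometric ratio $P(n)/P(n-1)$ that identity (iii) must reproduce. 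I would present the final verification as: guess $c_{n,j}$, state its defining $P$-recurrences, exhibit (in the supplementary material) the modular-reconstructed telescopers certifying (ii), and then collapse (i)--(iii) to finite recurrence checks, thereby filling the gap left by Koutschan, Krattenthaler and Schlosser in \cite{KKS2025}.
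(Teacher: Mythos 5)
Your proposal follows essentially the same route as the paper's own proof: reduce via Theorems~\ref{th:main-d} and \ref{th:main-h} to the two KKS determinants, then run the holonomic Ansatz --- guess the normalized cofactors $c_{n,j}$, verify the normalization (H1), the vanishing sums (H2), and the ratio identity (H3) --- with modular reduction and rational reconstruction making the creative-telescoping computations feasible. The one point where your sketch is more optimistic than the reality recorded in the paper is step (ii): there only two annihilators for $\Sigma_{i,n}$ (not a full Gr\"obner basis with finitely many singularities) were computable, so the paper must induct on $i$ down to the cases $0\le i\le 4$, where the telescopers' inhomogeneous parts turn out to be nonvanishing and an extra argument through the univariate annihilating ideals $\mathsf{annc}_b$ of $c_{n,b}$, $0\le b\le 2$, is required before everything collapses to finite initial-value checks.
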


\section{Domino tilings and nonintersecting lattice paths}\label{sec:tilings-paths-def}

The first \emph{act} features a \emph{duet} performed by the two combinatorial objects of our interest --- domino tilings and nonintersecting lattice paths.

To start with, we recall that a \emph{(generalized) partition of length $n$} is a sequence $\lambda:=(\lambda_{1},\lambda_{2},\ldots, \lambda_{n})$ of $n$ weakly decreasing \emph{nonnegative} integers $\lambda_1\ge \lambda_2\ge \cdots \ge \lambda_{n}\ge 0$. Throughout, all partitions are such generalized ones, with \emph{zero} allowed as a part.

\subsection{Nonintersecting lattice paths}

We begin with the simpler objects, nonintersecting lattice paths. For $(i_1,j_1),(i_2,j_2)\in \mathbb{Z}^2$, we first define a \emph{Delannoy path} from $(i_1,j_1)$ to $(i_2,j_2)$ as a lattice path using only east ($\rightarrow$), north ($\uparrow$) or northeast ($\nearrow$) steps. For example, in Fig.~\ref{fig:D-path}, we show all five Delannoy paths from $(0,0)$ to $(1,2)$. Furthermore, we define an \emph{H-Delannoy path} from $(i_1,j_1)$ to $(i_2,j_2)$ as a Delannoy path that does \emph{not} end with an east step. Therefore, in Fig.~\ref{fig:D-path}, only the first path is \emph{not} H-Delannoy because its last step is an east one.

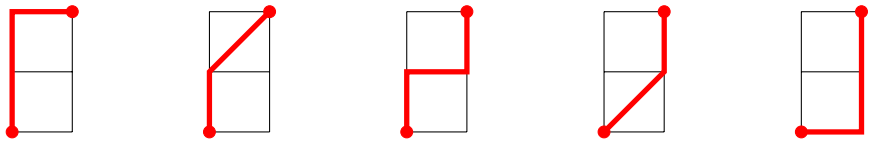
\begin{figure}[ht]
	\begin{tikzpicture}[scale=.8]
		\draw[black] (0,0) grid (1,2);
		\filldraw[red] (0,0) circle[radius=2.8pt];
		\filldraw[red] (1,2) circle[radius=2.8pt];
		\draw[line width=2pt,draw=red] (0,0) -- (0,1) -- (0,2) -- (1,2);
	\end{tikzpicture}\qquad\qquad
	\begin{tikzpicture}[scale=.8]
		\draw[black] (0,0) grid (1,2);
		\filldraw[red] (0,0) circle[radius=2.8pt];
		\filldraw[red] (1,2) circle[radius=2.8pt];
		\draw[line width=2pt,draw=red] (0,0) -- (0,1) -- (1,2);
	\end{tikzpicture}\qquad\qquad
	\begin{tikzpicture}[scale=.8]
		\draw[black] (0,0) grid (1,2);
		\filldraw[red] (0,0) circle[radius=2.8pt];
		\filldraw[red] (1,2) circle[radius=2.8pt];
		\draw[line width=2pt,draw=red] (0,0) -- (0,1) -- (1,1) -- (1,2);
	\end{tikzpicture}\qquad\qquad
	\begin{tikzpicture}[scale=.8]
		\draw[black] (0,0) grid (1,2);
		\filldraw[red] (0,0) circle[radius=2.8pt];
		\filldraw[red] (1,2) circle[radius=2.8pt];
		\draw[line width=2pt,draw=red] (0,0) -- (1,1) -- (1,2);
	\end{tikzpicture}\qquad\qquad
	\begin{tikzpicture}[scale=.8]
		\draw[black] (0,0) grid (1,2);
		\filldraw[red] (0,0) circle[radius=2.8pt];
		\filldraw[red] (1,2) circle[radius=2.8pt];
		\draw[line width=2pt,draw=red] (0,0) -- (1,0) -- (1,1) -- (1,2);
	\end{tikzpicture}
	\bigskip
	\caption{Delannoy paths from $(0,0)$ to $(1,2)$}\label{fig:D-path}
\end{figure}

Now let us fix a partition $\lambda=(\lambda_{1},\ldots, \lambda_{n})$ of length $n$.

A \emph{system of nonintersecting Delannoy paths marked by $\lambda$} is an $n$-tuple $p=(p_1,\ldots,p_n)$ of Delannoy paths such that $p_j$ moves from $(-j,j)$ to $(\lambda_j-j,n)$ for $1\le j\le n$ and that these Delannoy paths are pairwise \emph{nonintersecting} in the sense that every two different paths do not have any lattice point in common. We denote by $\LL_\lambda(n)$ the set of such nonintersecting Delannoy paths.

Similarly, a \emph{system of nonintersecting H-Delannoy paths marked by $\lambda$} is an $n$-tuple $\hp=(\hp_1,\ldots,\hp_n)$ of H-Delannoy paths such that $\hp_j$ moves from $(-j,j)$ to $(\lambda_j-j,n+1)$ for $1\le j\le n$ and that these H-Delannoy paths are pairwise nonintersecting. The set of nonintersecting H-Delannoy paths marked by $\lambda$ is written as $\hLL_\lambda(n)$.

\begin{figure}[ht]
	\begin{tikzpicture}[scale=.8]
		\draw[lightgray] (-3,1) grid (4,3);
		\filldraw[black] (-1,1) circle[radius=2.8pt];
		\filldraw[black] (4,3) circle[radius=2.8pt];
		\draw[line width=2pt,color=black] (-1,1) -- (3,1) -- (4,2) -- (4,3);
		\filldraw[black!40] (-2,2) circle[radius=2.8pt];
		\filldraw[black!40] (1,3) circle[radius=2.8pt];
		\draw[line width=2pt,color=black!40] (-2,2) -- (-1,3) -- (1,3);
		\filldraw[black!20] (-3,3) circle[radius=2.8pt];
		\filldraw[black!20] (-2,3) circle[radius=2.8pt];
		\draw[line width=2pt,color=black!20] (-3,3) -- (-2,3);
	\end{tikzpicture}\qquad\qquad
	\begin{tikzpicture}[scale=.8]
		\draw[lightgray] (-3,1) grid (4,4);
		\filldraw[black] (-1,1) circle[radius=2.8pt];
		\filldraw[black] (4,4) circle[radius=2.8pt];
		\draw plot[only marks,mark=x,mark size=4pt] (3,4);
		\draw[line width=2pt] (-1,1) -- (3,1) -- (4,2) -- (4,4);
		\filldraw[black!40] (-2,2) circle[radius=2.8pt];
		\filldraw[black!40] (1,4) circle[radius=2.8pt];
		\draw[color=black!40] plot[only marks,mark=x,mark size=4pt,color=black!40] (0,4);
		\draw[line width=2pt,color=black!40] (-2,2) -- (-1,3) -- (0,3) -- (1,4);
		\filldraw[black!20] (-3,3) circle[radius=2.8pt];
		\filldraw[black!20] (-2,4) circle[radius=2.8pt];
		\draw[color=black!20] plot[only marks,mark=x,mark size=4pt] (-3,4);
		\draw[line width=2pt,color=black!20] (-3,3) -- (-2,3) -- (-2,4);
	\end{tikzpicture}
	\bigskip
	\caption{Nonintersecting Delannoy (left) and H-Delannoy (right) paths marked by the partition $(5,3,1)$}\label{fig:non-p-ex}
\end{figure}
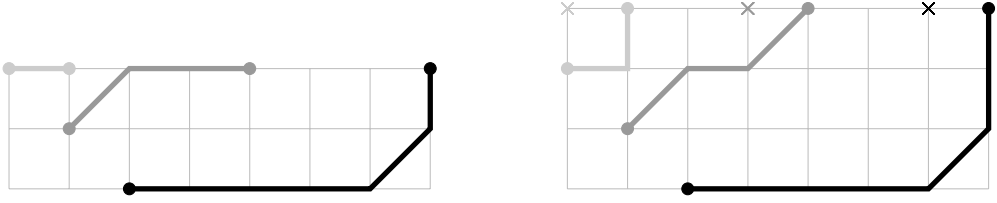

Let $w_1,w_2,w_3\in \mathbb{C}$ be weights. For every $p=(p_1,\ldots,p_n)\in \LL_\lambda(n)$ or $\hLL_\lambda(n)$, we assign a weight
\begin{align}
	w_{\mathrm{S}}(p) := w_1^{\#_p(\rightarrow)} w_2^{\#_p(\uparrow)} w_3^{\#_p(\nearrow)},
\end{align}
where we count each type of step over all $p_j$ of $p$. As an example, the nonintersecting Delannoy paths on the left of Fig.~\ref{fig:non-p-ex} have weight $w_1^7 w_2 w_3^2$, and the nonintersecting H-Delannoy paths on the right of Fig.~\ref{fig:non-p-ex} have weight $w_1^6 w_2^3 w_3^3$.

What lies at the center of our research are the following two \emph{weighted} enumerations:
\begin{align*}
	\cyL_{w_1,w_2,w_3}^{\lambda}(n) &:= \sum_{p\in \LL_{\lambda}(n)} w_{\mathrm{S}}(p),\\
	\hcyL_{w_1,w_2,w_3}^{\lambda}(n) &:= \sum_{\hp\in \hLL_{\lambda}(n)} w_{\mathrm{S}}(\hp).
\end{align*}
It is clear that for any $p\in \LL_{\lambda}(n)$,
\begin{align*}
	\#_p(\rightarrow) + \#_p(\nearrow) = \lambda_1 + \lambda_2 + \cdots + \lambda_n = |\lambda|,
\end{align*}
where $|\lambda|$ denotes the \emph{size} of the partition $\lambda$. In addition,
\begin{align*}
	\#_p(\uparrow) + \#_p(\nearrow) = (n-1) + (n-2) + \cdots + (n-n) = \tbinom{n}{2}.
\end{align*}
Likewise, for any $\hp\in \hLL_{\lambda}(n)$,
\begin{align*}
	\#_{\hp}(\rightarrow) + \#_{\hp}(\nearrow) = \lambda_1 + \lambda_2 + \cdots + \lambda_n = |\lambda|,
\end{align*}
and
\begin{align*}
	\#_{\hp}(\uparrow) + \#_{\hp}(\nearrow) = (n-0) + (n-1) + \cdots + (n-(n-1)) = \tbinom{n+1}{2}.
\end{align*}
The above observations imply the following relations for our weighted enumerations.
\begin{proposition}\label{eq:lattice-c-weight}
	Let $c\in \mathbb{C}$. Then for all $n\ge 1$,
	\begin{align*}
		\cyL_{cw_1,w_2,cw_3}^{\lambda}(n) &= c^{|\lambda|} \cyL_{w_1,w_2,w_3}^{\lambda}(n),\\
		\cyL_{w_1,cw_2,cw_3}^{\lambda}(n) &= c^{\binom{n}{2}} \cyL_{w_1,w_2,w_3}^{\lambda}(n).
	\end{align*}
	In addition,
	\begin{align*}
		\hcyL_{cw_1,w_2,cw_3}^{\lambda}(n) &= c^{|\lambda|} \hcyL_{w_1,w_2,w_3}^{\lambda}(n),\\
		\hcyL_{w_1,cw_2,cw_3}^{\lambda}(n) &= c^{\binom{n+1}{2}} \hcyL_{w_1,w_2,w_3}^{\lambda}(n).
	\end{align*}
\end{proposition}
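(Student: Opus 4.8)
The plan is to prove each of the four identities by manipulating the defining weighted sum term by term, exploiting the two step-count invariants recorded immediately above the statement. The essential observation is that, for a fixed partition $\lambda$ and fixed $n$, the sum $\#_p(\rightarrow) + \#_p(\nearrow)$ equals $|\lambda|$ for \emph{every} $p \in \LL_\lambda(n)$, and likewise $\#_p(\uparrow) + \#_p(\nearrow)$ equals $\binom{n}{2}$ for every such $p$; neither quantity depends on the individual path system. Hence, rescaling a suitable pair of weights by a common factor $c$ extracts the \emph{same} power of $c$ from each term of the sum, which can then be pulled out as a global constant.

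For the first identity I would substitute $w_1 \mapsto cw_1$ and $w_3 \mapsto cw_3$ into the weight of an arbitrary $p \in \LL_\lambda(n)$, giving
\[(cw_1)^{\#_p(\rightarrow)} w_2^{\#_p(\uparrow)} (cw_3)^{\#_p(\nearrow)} = c^{\#_p(\rightarrow) + \#_p(\nearrow)} w_{\mathrm{S}}(p) = c^{|\lambda|} w_{\mathrm{S}}(p),\]
where the final equality is the first invariant. Summing over all $p \in \LL_\lambda(n)$ and factoring out $c^{|\lambda|}$ yields $\cyL_{cw_1,w_2,cw_3}^{\lambda}(n) = c^{|\lambda|} \cyL_{w_1,w_2,w_3}^{\lambda}(n)$. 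The second identity is proved identically, this time substituting $w_2 \mapsto cw_2$ and $w_3 \mapsto cw_3$ and invoking the second invariant $\#_p(\uparrow) + \#_p(\nearrow) = \binom{n}{2}$ to extract the factor $c^{\binom{n}{2}}$ from each term before summing.

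The two H-Delannoy identities follow by repeating this argument verbatim over $\hLL_\lambda(n)$ in place of $\LL_\lambda(n)$, using the corresponding invariants stated above: $\#_{\hp}(\rightarrow) + \#_{\hp}(\nearrow) = |\lambda|$ produces the factor $c^{|\lambda|}$, while $\#_{\hp}(\uparrow) + \#_{\hp}(\nearrow) = \binom{n+1}{2}$ produces the factor $c^{\binom{n+1}{2}}$. Because each invariant holds uniformly across the whole configuration set, no further input is required and the sums close up exactly as in the Delannoy case.

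I do not expect any genuine obstacle: the proposition is a direct corollary of the invariance of the two step-count sums, which is the only structural fact used and has already been verified. The single point deserving a word of care is that these invariants hold \emph{pointwise} over the entire path set, not merely on average, so that the extracted power of $c$ is a true common factor; once this is noted, pulling it outside a finite sum is immediate.
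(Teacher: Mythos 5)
Your proof is correct and is essentially the paper's own argument: the paper records exactly these two step-count invariants immediately before the statement and then notes that they ``imply the following relations,'' which is precisely the term-by-term extraction of the common factor $c^{|\lambda|}$ or $c^{\binom{n}{2}}$ (resp.\ $c^{\binom{n+1}{2}}$) that you carry out explicitly. No gap, and no difference in approach.
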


For the moment, we concentrate on partitions that form an \emph{arithmetic progression}. Let $s,r\in \mathbb{N}$ be natural numbers. We fix our partition $\lambda$ as
\begin{align}\label{eq:lambda-sr}
	\lambda^{(s,r)} := (s(n-1)+r,s(n-2)+r,\ldots,r).
\end{align}

Let us denote by $\LL^{(s,r)}(n) := \LL_{\lambda^{(s,r)}}(n)$ the set of nonintersecting Delannoy paths marked by $\lambda^{(s,r)}$. We write
\begin{align}\label{eq:cyL-def}
	\cyL_{w_1,w_2,w_3}^{(s,r)}(n):=\sum_{p\in \LL^{(s,r)}(n)} w_{\mathrm{S}}(p).
\end{align}
Meanwhile, we denote by $\hLL^{(s,r)}(n) := \hLL_{\lambda^{(s,r)}}(n)$ the set of nonintersecting H-Delannoy paths marked by $\lambda^{(s,r)}$ and define
\begin{align}\label{eq:hcyL-def}
	\hcyL_{w_1,w_2,w_3}^{(s,r)}(n):=\sum_{\hp\in \hLL^{(s,r)}(n)} w_{\mathrm{S}}(\hp).
\end{align}

\begin{remark}\label{rmk:c1c2-scalar}
	Let $c_1,c_2\in \mathbb{C}\backslash\{0\}$. Then in view of Proposition~\ref{eq:lattice-c-weight}, we have
	\begin{align}
		\cyL_{c_1,(l-1)c_2,c_1c_2}^{(m-1,a)}(n) &= c_1^{(m-1)\binom{n}{2}+an} c_2^{\binom{n}{2}} \cyL_{1,l-1,1}^{(m-1,a)}(n),\label{eq:cyL-c1c2}\\
		\hcyL_{c_1,(l-1)c_2,c_1c_2}^{(m-1,a)}(n) &= c_1^{(m-1)\binom{n}{2}+an} c_2^{\binom{n}{2}+n} \hcyL_{1,l-1,1}^{(m-1,a)}(n).\label{eq:hcyL-c1c2}
	\end{align}
	So the KKS determinants in \eqref{eq:main-d} and \eqref{eq:main-h}, up to a scalar, may also be related to our enumerations of nonintersecting lattice paths counted by other choices of weights as indicated on the left of \eqref{eq:cyL-c1c2} and \eqref{eq:hcyL-c1c2}. A particularly interesting case here is to choose $c_1 = l-1$ and $c_2 = 1/(l-1)$ when $l\ne 1$, so that we arrive at expressions in terms of KKS determinants for $\cyL_{l-1,1,1}^{(m-1,a)}(n)$ and $\hcyL_{l-1,1,1}^{(m-1,a)}(n)$.
\end{remark}

\subsection{Domino tilings}

Now we move on to domino tilings of generalized Aztec triangles. Again, we fix a partition $\lambda=(\lambda_{1},\ldots, \lambda_{n})$ of length $n$. Our most crucial task is to construct the \emph{Aztec-type domains} for our tilings.

Let $M,N\in \mathbb{N}$ be natural numbers. We first introduce a domain $\frakA_{\mathrm{D}}(M;N)$ as a \emph{prototype} to facilitate our construction; this domain is built diagonal by diagonal. To start with, we work on a \emph{Cartesian grid} and label each unit square by its left-top point. Now for each $k$ with $0\le k\le N$, we place $M+\lceil\frac{k}{2}\rceil$ unit squares, labeled by $(i,i-k)$ with $0\le i\le M+\lceil\frac{k}{2}\rceil-1$, into our domain $\frakA_{\mathrm{D}}(M;N)$; these unit squares comprise the \emph{$k$-th diagonal} of the domain. We also color \emph{even-indexed} diagonals with \emph{white} and \emph{odd-indexed} diagonals with \emph{gray} in the way of a chessboard. See Fig.~\ref{fig:prototype-domain} for an example.

\begin{figure}[ht]
	\begin{tikzpicture}[scale=.5]

\draw[ultra thick] (0,1) rectangle (1,-8);
\draw[ultra thick] (1,2) rectangle (2,-7);
\draw[ultra thick] (2,3) rectangle (3,-6);
\draw[ultra thick] (3,4) rectangle (4,-5);
\draw[ultra thick] (4,4) rectangle (5,-4);
\draw[ultra thick] (5,3) rectangle (6,-3);
\draw[ultra thick] (6,2) rectangle (7,-2);
\draw[ultra thick] (7,1) rectangle (8,-1);
\foreach \i in {0,...,3}
{\draw[fill=white,draw=gray] (\i,\i) rectangle (\i+1,\i+1);}
\foreach \i in {0,...,4}
{\draw[fill=lightgray,draw=gray] (\i,\i-1) rectangle (\i+1,\i-1+1);}
\foreach \i in {0,...,4}
{\draw[fill=white,draw=gray] (\i,\i-2) rectangle (\i+1,\i-2+1);}
\foreach \i in {0,...,5}
{\draw[fill=lightgray,draw=gray] (\i,\i-3) rectangle (\i+1,\i-3+1);}
\foreach \i in {0,...,5}
{\draw[fill=white,draw=gray] (\i,\i-4) rectangle (\i+1,\i-4+1);}
\foreach \i in {0,...,6}
{\draw[fill=lightgray,draw=gray] (\i,\i-5) rectangle (\i+1,\i-5+1);}
\foreach \i in {0,...,6}
{\draw[fill=white,draw=gray] (\i,\i-6) rectangle (\i+1,\i-6+1);}
\foreach \i in {0,...,7}
{\draw[fill=lightgray,draw=gray] (\i,\i-7) rectangle (\i+1,\i-7+1);}
\foreach \i in {0,...,7}
{\draw[fill=white,draw=gray] (\i,\i-8) rectangle (\i+1,\i-8+1);}
\draw[color=cyan] (0,0) node[left,scale=.75]{diagonal 0} -- (4,4);
\draw[color=violet] (0,-1) node[left,scale=.75]{diagonal 1} -- (5,4);
\draw[color=cyan] (0,-2) node[left,scale=.75]{diagonal 2} -- (5,3);
\draw[color=violet] (0,-3) node[left,scale=.75]{diagonal 3} -- (6,3);
\draw[color=cyan] (0,-4) node[left,scale=.75]{diagonal 4} -- (6,2);
\draw[color=violet] (0,-5) node[left,scale=.75]{diagonal 5} -- (7,2);
\draw[color=cyan] (0,-6) node[left,scale=.75]{diagonal 6} -- (7,1);
\draw[color=violet] (0,-7) node[left,scale=.75]{diagonal 7} -- (8,1);
\draw[color=cyan] (0,-8) node[left,scale=.75]{diagonal 8} -- (8,0);
	\end{tikzpicture}
	\bigskip
	\caption{The prototypical domain $\mathfrak{A}_D(4;8)$}\label{fig:prototype-domain}
\end{figure}
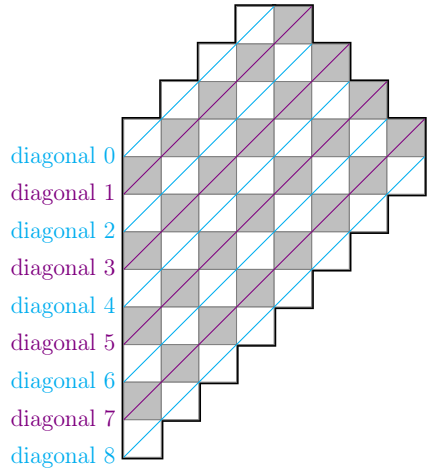

Next, we plot the \emph{Young diagram} of the partition $\lambda$. Note that the shape of this diagram can be uniquely characterized by its \emph{southeast} boundary, which consists of \emph{east} and \emph{north} edges. We can further encode this boundary by a sequence of \emph{circles} ($\circ$) and \emph{bullets} ($\bullet$), with each \emph{circle} denoting an \emph{east} edge and each \emph{bullet} denoting a \emph{north} edge; such an encoding is called a \emph{boundary encoding} of $\lambda$. We refer the reader to Fig.~\ref{fig:young} for an illustration of the following two boundary encodings:
\begin{align*}
	\begin{array}{rcl}
		(7,5,3,1) & \xrightarrow[\text{encoding}]{\text{boundary}} & \text{$\circ$ $\bullet$ $\circ$ $\circ$ $\bullet$ $\circ$ $\circ$ $\bullet$ $\circ$ $\circ$ $\bullet$}\\
		(6,4,2,0) & \xrightarrow[\text{encoding}]{\text{boundary}} & \text{$\bullet$ $\circ$ $\circ$ $\bullet$ $\circ$ $\circ$ $\bullet$ $\circ$ $\circ$ $\bullet$}
	\end{array}
\end{align*}

\begin{figure}[ht]
	\begin{tikzpicture}[scale=.5]
		\draw (0,0) grid (7,-1);
		\draw (0,-1) grid (5,-2);
		\draw (0,-2) grid (3,-3);
		\draw (0,-3) grid (1,-4);
		\draw[ultra thick] (0,-4) -- (1,-4) -- (1,-3) -- (3,-3) -- (3,-2) -- (5,-2) -- (5,-1) -- (7,-1) -- (7,0);
		\foreach\i\j in {1/-4, 3/-3, 5/-2, 7/-1} {
			\draw[black,fill=black] (\i+.25,\j+.5) circle (.1);
		}
		\foreach\i\j in {0/-4, 1/-3, 2/-3, 3/-2, 4/-2, 5/-1, 6/-1} {
			\draw[black,fill=white] (\i+.5,\j-.25) circle (.1);
		}
	\end{tikzpicture}\qquad\qquad
	\begin{tikzpicture}[scale=.5]
		\draw[white,fill=white] (0.5,-4.25) circle (.1);
		\draw (0,0) grid (6,-1);
		\draw (0,-1) grid (4,-2);
		\draw (0,-2) grid (2,-3);
		\draw[ultra thick] (0,-4) -- (0,-3) -- (2,-3) -- (2,-2) -- (4,-2) -- (4,-1) -- (6,-1) -- (6,0);
		\foreach\i\j in {0/-4, 2/-3, 4/-2, 6/-1} {
			\draw[black,fill=black] (\i+.25,\j+.5) circle (.1);
		}
		\foreach\i\j in {0/-3, 1/-3, 2/-2, 3/-2, 4/-1, 5/-1} {
			\draw[black,fill=white] (\i+.5,\j-.25) circle (.1);
		}
	\end{tikzpicture}
	\bigskip
	\caption{The boundary encoding of partitions $(7,5,3,1)$ (left) and $(6,4,2,0)$ (right)}\label{fig:young}
\end{figure}
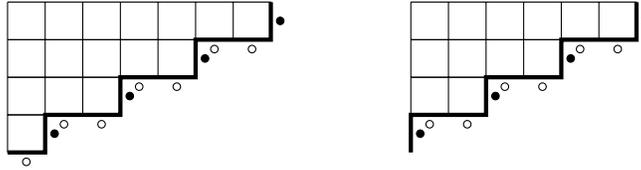

Recall that the partition $\lambda=(\lambda_{1},\ldots, \lambda_{n})$ is fixed. Now we introduce \emph{two} families of \emph{Aztec-type domains}, both of which are constructed by removing a number of unit squares from the last diagonal of a certain prototypical domain:

\begin{description}[itemsep=2pt]
	
	\item[Type 1] We start with the prototypical domain $\frakA_{\mathrm{D}}(\lambda_1;2n-1)$. Note that in this domain, there are $\lambda_1+n$ unit squares in the last diagonal, and this number equals the total number of circles and bullets in the boundary encoding of $\lambda$. Now in the last diagonal, we place these circles and bullets derived from the boundary encoding of $\lambda$ into the unit squares, arranging from left-bottom to right-top. We then remove the unit squares marked by a \textbf{bullet} from $\frakA_{\mathrm{D}}(\lambda_1;2n-1)$ to obtain the desired Aztec-type domain $\frakA_{\lambda}(n)$, which is called the \emph{Aztec-type domain of Type 1 marked by $\lambda$};
	
	\item[Type 2] We start with the prototypical domain $\frakA_{\mathrm{D}}(\lambda_1;2n)$. Note that in this domain, there are also $\lambda_1+n$ unit squares in the last diagonal. Now in the last diagonal, we again place these circles and bullets derived from the boundary encoding of $\lambda$ into the unit squares, arranging from left-bottom to right-top. We then remove the unit squares marked by a \textbf{circle} from $\frakA_{\mathrm{D}}(\lambda_1;2n)$ to obtain the desired Aztec-type domain $\hfrakA_{\lambda}(n)$, which is called the \emph{Aztec-type domain of Type 2 marked by $\lambda$}.
\end{description}

\begin{figure}[ht]
	\begin{tikzpicture}[scale=.4]

\draw[fill=white,draw=white] (0,-8) rectangle (1,-7);
\foreach \i in {1, 4, 7, 10}
{\draw[fill=lightgray,draw=gray] (\i,\i-7) rectangle (\i+1,\i-7+1);}
\draw[line width=2.4pt,color=orange] (0,1) rectangle (1,-7);
\draw[line width=2.4pt,color=orange] (1,2) rectangle (2,-5);
\draw[line width=2.4pt,color=orange] (2,3) rectangle (3,-5);
\draw[line width=2.4pt,color=orange] (3,4) rectangle (4,-4);
\draw[line width=2.4pt,color=orange] (4,5) rectangle (5,-2);
\draw[line width=2.4pt,color=orange] (5,6) rectangle (6,-2);
\draw[line width=2.4pt,color=orange] (6,7) rectangle (7,-1);
\draw[line width=2.4pt,color=orange] (7,7) rectangle (8,1);
\draw[line width=2.4pt,color=orange] (8,6) rectangle (9,1);
\draw[line width=2.4pt,color=orange] (9,5) rectangle (10,2);
\foreach \i in {0,...,6}
{\draw[fill=white,draw=gray] (\i,\i) rectangle (\i+1,\i+1);}
\foreach \i in {0,...,7}
{\draw[fill=lightgray,draw=gray] (\i,\i-1) rectangle (\i+1,\i-1+1);}
\foreach \i in {0,...,7}
{\draw[fill=white,draw=gray] (\i,\i-2) rectangle (\i+1,\i-2+1);}
\foreach \i in {0,...,8}
{\draw[fill=lightgray,draw=gray] (\i,\i-3) rectangle (\i+1,\i-3+1);}
\foreach \i in {0,...,8}
{\draw[fill=white,draw=gray] (\i,\i-4) rectangle (\i+1,\i-4+1);}
\foreach \i in {0,...,9}
{\draw[fill=lightgray,draw=gray] (\i,\i-5) rectangle (\i+1,\i-5+1);}
\foreach \i in {0,...,9}
{\draw[fill=white,draw=gray] (\i,\i-6) rectangle (\i+1,\i-6+1);}
\foreach \i in {0, 2, 3, 5, 6, 8, 9}
{\draw[fill=lightgray,draw=gray] (\i,\i-7) rectangle (\i+1,\i-7+1);}
\foreach\i in {1, 4, 7, 10} {
	\draw[black,fill=black] (\i+.5,\i-7+.5) circle (.1);
}
\foreach\i in {0, 2, 3, 5, 6, 8, 9} {
	\draw[black,fill=white] (\i+.5,\i-7+.5) circle (.1);
}
	\end{tikzpicture}\qquad\qquad
	\begin{tikzpicture}[scale=.4]

\foreach \i in {0, 2, 3, 5, 6, 8, 9}
{\draw[fill=white,draw=gray] (\i,\i-8) rectangle (\i+1,\i-8+1);}
\draw[line width=2.4pt,color=orange] (0,1) rectangle (1,-7);
\draw[line width=2.4pt,color=orange] (1,2) rectangle (2,-7);
\draw[line width=2.4pt,color=orange] (2,3) rectangle (3,-5);
\draw[line width=2.4pt,color=orange] (3,4) rectangle (4,-4);
\draw[line width=2.4pt,color=orange] (4,5) rectangle (5,-4);
\draw[line width=2.4pt,color=orange] (5,6) rectangle (6,-2);
\draw[line width=2.4pt,color=orange] (6,7) rectangle (7,-1);
\draw[line width=2.4pt,color=orange] (7,7) rectangle (8,-1);
\draw[line width=2.4pt,color=orange] (8,6) rectangle (9,1);
\draw[line width=2.4pt,color=orange] (9,5) rectangle (10,2);
\draw[line width=2.4pt,color=orange] (10,4) rectangle (11,2);
\foreach \i in {0,...,6}
{\draw[fill=white,draw=gray] (\i,\i) rectangle (\i+1,\i+1);}
\foreach \i in {0,...,7}
{\draw[fill=lightgray,draw=gray] (\i,\i-1) rectangle (\i+1,\i-1+1);}
\foreach \i in {0,...,7}
{\draw[fill=white,draw=gray] (\i,\i-2) rectangle (\i+1,\i-2+1);}
\foreach \i in {0,...,8}
{\draw[fill=lightgray,draw=gray] (\i,\i-3) rectangle (\i+1,\i-3+1);}
\foreach \i in {0,...,8}
{\draw[fill=white,draw=gray] (\i,\i-4) rectangle (\i+1,\i-4+1);}
\foreach \i in {0,...,9}
{\draw[fill=lightgray,draw=gray] (\i,\i-5) rectangle (\i+1,\i-5+1);}
\foreach \i in {0,...,9}
{\draw[fill=white,draw=gray] (\i,\i-6) rectangle (\i+1,\i-6+1);}
\foreach \i in {0,...,10}
{\draw[fill=lightgray,draw=gray] (\i,\i-7) rectangle (\i+1,\i-7+1);}
\foreach \i in {1, 4, 7, 10}
{\draw[fill=white,draw=gray] (\i,\i-8) rectangle (\i+1,\i-8+1);}
\foreach\i in {1, 4, 7, 10} {
	\draw[black,fill=black] (\i+.5,\i-8+.5) circle (.1);
}
\foreach\i in {0, 2, 3, 5, 6, 8, 9} {
	\draw[black,fill=white] (\i+.5,\i-8+.5) circle (.1);
}
	\end{tikzpicture}
	\bigskip
	\caption{The Aztec-type domains $\mathfrak{A}_{\lambda}(4)$ (left) and $\widehat{\mathfrak{A}}_{\lambda}(4)$ (right) marked by the partition $\lambda=(7,5,3,1)$}\label{fig:aztec-domain-lambda}
\end{figure}
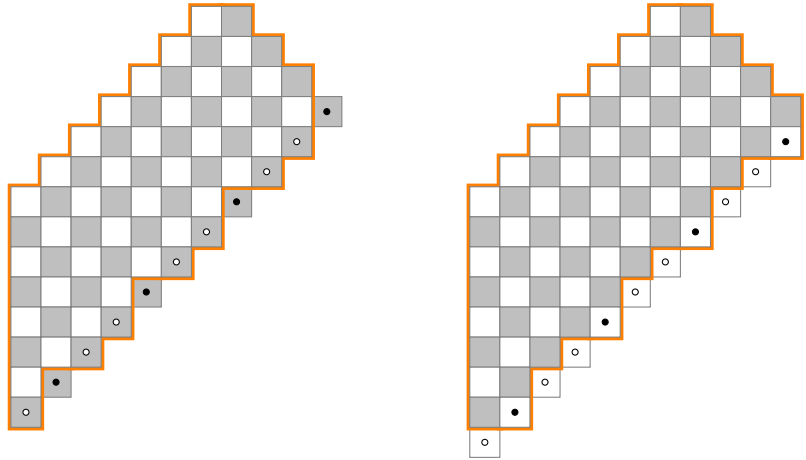

In Fig.~\ref{fig:aztec-domain-lambda}, the left side shows the Aztec-type domain of Type 1 marked by the partition $(7,5,3,1)$ and the right side shows the Aztec-type domain of Type 2 marked by the same partition.

\begin{remark}\label{rmk:Aztec}
	The original Aztec triangle $\frakA(n)$ introduced by Di Francesco is the Aztec-type domain of Type 1 marked by the partition $(n,n-1,\ldots,1)$.
\end{remark}

\begin{remark}\label{rmk:CHK}
	The domain involved in the counting function $\cyD_{k}(n)$ considered by Corteel, Huang and Krattenthaler is the Aztec-type domain of Type 1 marked by the partition $(k,k-1,\ldots,1,\underbrace{0, 0, \ldots, 0}_{n-k\text{ copies}})$.
\end{remark}

Let $\DD_\lambda(n)$ denote the set of domino tilings of the Aztec-type domain $\frakA_{\lambda}(n)$ and let $\hDD_\lambda(n)$ denote the set of domino tilings of the Aztec-type domain $\hfrakA_{\lambda}(n)$.

Note that our coloring of the Aztec-type domain introduces \emph{four} different dominoes:
\begin{align*}
	\begin{tikzpicture}[scale=1]
		\node[left] at (0, 0) {$\mathfrak{D}_1=$};
		\node[right] at (0.5, 0) {,};
		\draw[fill=lightgray,draw=gray,scale=1] (0,-0.5) rectangle (0.5,0);
		\draw[fill=white,draw=gray,scale=1] (0,0) rectangle (0.5,0.5);
		\draw[ultra thick,draw=blue] (0,-0.5) rectangle (0.5,0.5);
		\node[left] at (3, 0) {$\mathfrak{D}_2=$};
		\node[right] at (3.5, 0) {,};
		\draw[fill=white,draw=gray,scale=1] (3,-0.5) rectangle (3.5,0);
		\draw[fill=lightgray,draw=gray,scale=1] (3,0) rectangle (3.5,0.5);
		\draw[ultra thick,draw=blue] (3,-0.5) rectangle (3.5,0.5);
		\node[left] at (6, 0) {$\mathfrak{D}_3=$};
		\node[right] at (7, 0) {,};
		\draw[fill=lightgray,draw=gray,scale=1] (6,-0.25) rectangle (6.5,0.25);
		\draw[fill=white,draw=gray,scale=1] (6.5,-0.25) rectangle (7,0.25);
		\draw[ultra thick,draw=blue] (6,-0.25) rectangle (7,0.25);
		\node[left] at (9.5, 0) {$\mathfrak{D}_4=$};
		\node[right] at (10.5, 0) {.};
		\draw[fill=white,draw=gray,scale=1] (9.5,-0.25) rectangle (10,0.25);
		\draw[fill=lightgray,draw=gray,scale=1] (10,-0.25) rectangle (10.5,0.25);
		\draw[ultra thick,draw=blue] (9.5,-0.25) rectangle (10.5,0.25);
	\end{tikzpicture}
\end{align*}

Again, we let $w_1,w_2,w_3\in \mathbb{C}$ be weights. For every domino titling $T\in \DD_\lambda(n)$ or $\hDD_\lambda(n)$, we assign a weight
\begin{align}
	w_{\mathrm{D}}(T) := w_1^{\#_T(\frakD_1)} w_2^{\#_T(\frakD_2)} w_3^{\#_T(\frakD_3)} 1^{\#_T(\frakD_4)} = w_1^{\#_T(\frakD_1)} w_2^{\#_T(\frakD_2)} w_3^{\#_T(\frakD_3)},
\end{align}
where $\#_T(\frakD_i)$ counts the number of dominoes $\frakD_i$ in the tiling $T$ for each $i=1,2,3,4$.

Now the \emph{weighted} enumerations of our interest are
\begin{align*}
	\cyD_{w_1,w_2,w_3}^{\lambda}(n) &:= \sum_{T\in \DD_{\lambda}(n)} w_{\mathrm{D}}(T),\\
	\hcyD_{w_1,w_2,w_3}^{\lambda}(n) &:= \sum_{\hT\in \hDD_{\lambda}(n)} w_{\mathrm{D}}(\hT).
\end{align*}

We also require that the partitions in our study form an arithmetic progression as in \eqref{eq:lambda-sr}:
\begin{align*}
	\lambda^{(s,r)} := (s(n-1)+r,s(n-2)+r,\ldots,r),
\end{align*}
where $s,r\in \mathbb{N}$ are natural numbers.

Let $\DD^{(s,r)}(n) := \DD_{\lambda^{(s,r)}}(n)$ and
\begin{align}\label{eq:cyD-def}
	\cyD_{w_1,w_2,w_3}^{(s,r)}(n):=\sum_{T\in \DD^{(s,r)}(n)} w_{\mathrm{D}}(T).
\end{align}
Similarly, we write $\hDD^{(s,r)}(n) := \hDD_{\lambda^{(s,r)}}(n)$ and
\begin{align}\label{eq:hcyD-def}
	\hcyD_{w_1,w_2,w_3}^{(s,r)}(n):=\sum_{\hT\in \hDD^{(s,r)}(n)} w_{\mathrm{D}}(\hT).
\end{align}

\subsection{Bijections}

To close this \emph{act}, we will witness a \emph{piano four hands} with the rhythm centralized at bijections connecting our domino tilings and nonintersecting lattice paths.

\begin{lemma}\label{le:bij-1}
	For any $s,r\in \mathbb{N}$, there is a bijection
	\begin{align*}
		\begin{array}{cccc}
			\phi: & \DD^{(s,r)}(n) & \to & \LL^{(s,r)}(n)\\
			& T & \mapsto & p
		\end{array}
	\end{align*}
	such that
	\begin{align*}
		\#_T(\frakD_1) = \#_p(\rightarrow),\qquad \#_T(\frakD_2) = \#_p(\uparrow),\qquad \#_T(\frakD_3) = \#_p(\nearrow).
	\end{align*}
\end{lemma}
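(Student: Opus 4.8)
The plan is to construct $\phi$ and its inverse explicitly by a local \emph{domino-to-step} rule, reading each active domino as a unit segment of a lattice path and letting the fourth domino type play the role of background. Since the arithmetic-progression shape $\lambda^{(s,r)}$ plays no role in the construction, I would prove the statement for an arbitrary partition $\lambda$ of length $n$ and then specialize. The first step is to coordinatize the prototype: label the square in position $i$ of the $k$-th diagonal so that it occupies $[i,i+1]\times[i-k-1,i-k]$, and record that two squares are edge-adjacent exactly when they sit on consecutive diagonals $k$ and $k+1$, either stacked vertically (same $i$) or placed side by side (positions $i$ and $i+1$). Because even diagonals are white and odd diagonals are gray, every domino covers one white and one gray square, and a short case check on orientation and parity yields the key local dictionary: viewed from its \emph{white} square, a domino points \emph{down} iff it is $\frakD_1$, \emph{up} iff it is $\frakD_2$, \emph{left} iff it is $\frakD_3$, and \emph{right} iff it is $\frakD_4$.

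Next I define $\phi$. Inside each $\frakD_1$, $\frakD_2$, $\frakD_3$ I draw a single segment joining two midpoints of the domino's sides, chosen so that, after an affine change of variables sending the diagonal grid to the standard $\mathbb{Z}^2$ of Delannoy paths, it becomes respectively an east step $\rightarrow$, a north step $\uparrow$, and a northeast step $\nearrow$; inside each $\frakD_4$ I draw nothing, so these dominoes tile the background between the paths. The content of this step is that the segments concatenate: each interior lattice vertex of the region is the meeting point of exactly two drawn segments or of none, so the drawn edges assemble into a disjoint union of lattice paths. Tracking the left boundary of $\frakA_\lambda(n)$ shows these paths have their lower endpoints at $(-1,1),(-2,2),\dots,(-n,n)$, one per path, while the removed bullet squares of the top diagonal --- read off from the boundary encoding of $\lambda$ --- force the upper endpoints to height $n$ at the abscissae $\lambda_1-1,\lambda_2-2,\dots,\lambda_n-n$. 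Hence $\phi(T)$ is a system of $n$ Delannoy paths with $p_j$ running from $(-j,j)$ to $(\lambda_j-j,n)$, i.e.\ an element of $\LL^{(s,r)}(n)$.

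Two properties then come essentially for free. Nonintersection holds because the drawn segments live in the interiors of distinct dominoes of a fixed tiling, so no lattice point can be shared by two paths. The statistic identities $\#_T(\frakD_1)=\#_p(\rightarrow)$, $\#_T(\frakD_2)=\#_p(\uparrow)$, $\#_T(\frakD_3)=\#_p(\nearrow)$ hold term by term, since each east, north, or northeast step is produced by exactly one $\frakD_1$, $\frakD_2$, or $\frakD_3$. For bijectivity I would describe the inverse directly: given $p\in\LL^{(s,r)}(n)$, place an $\frakD_1$, $\frakD_2$, or $\frakD_3$ on each east, north, or northeast step according to the same dictionary, and then argue that the complement of this \emph{path skeleton} inside $\frakA_\lambda(n)$ admits a unique tiling, necessarily by $\frakD_4$ dominoes; checking that $\phi$ and this map are mutually inverse is then routine.

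The main obstacle is the \emph{global consistency} buried in the second step: proving that the purely local drawing rule really does glue into exactly $n$ connected paths with the asserted endpoints, and --- equivalently, for the inverse --- that deleting the active-domino skeleton always leaves a region forced to be tiled by $\frakD_4$ alone. This amounts to a careful bookkeeping of the affine dictionary between the diagonal grid and the Delannoy lattice, together with the translation of the boundary encoding of $\lambda$ into the start and end positions; once this correspondence is pinned down, connectivity and the tiling of the background follow from the parity of the chessboard colouring and the counts of white and gray squares on each diagonal. I expect nonintersection, the weight preservation, and the verification that the two maps invert each other to be the easy parts.
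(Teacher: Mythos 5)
Your proposal is correct and follows essentially the same route as the paper, which simply cites Corteel--Huang--Krattenthaler \cite[Sect.~3]{CHK2023} and illustrates the identical construction: a segment drawn inside each $\frakD_1$, $\frakD_2$, $\frakD_3$ (with $\frakD_4$ as background), followed by a vertical flip and $135^\circ$ rotation --- your ``affine change of variables'' --- so that the inserted segments assemble into the nonintersecting Delannoy paths with endpoints dictated by the boundary encoding of $\lambda$. The paper does not spell out the global-consistency bookkeeping either (it defers to \cite{CHK2023}), so your sketch is in effect a more explicit outline of the same argument.
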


This bijection is presented in \cite[Sect.~3]{CHK2023} and here we visually display the idea. As a beginning, in the first three types of dominoes, we add an extra edge to produce the required lattice path. To be specific, we have
\begin{align*}
	\begin{tikzpicture}[scale=1]
		\node[left] at (0, 0) {$\mathfrak{D}_1=$};
		\node[right] at (0.5, 0) {,};
		\draw[fill=lightgray,draw=gray,scale=1] (0,-0.5) rectangle (0.5,0);
		\draw[fill=white,draw=gray,scale=1] (0,0) rectangle (0.5,0.5);
		\draw[ultra thick,draw=blue] (0,-0.5) rectangle (0.5,0.5);
		\draw[ultra thick,draw=red] (0,-0.25) -- (0.5,0.25);
		\node[left] at (3, 0) {$\mathfrak{D}_2=$};
		\node[right] at (3.5, 0) {,};
		\draw[fill=white,draw=gray,scale=1] (3,-0.5) rectangle (3.5,0);
		\draw[fill=lightgray,draw=gray,scale=1] (3,0) rectangle (3.5,0.5);
		\draw[ultra thick,draw=blue] (3,-0.5) rectangle (3.5,0.5);
		\draw[ultra thick,draw=red] (3,0.25) -- (3.5,-0.25);
		\node[left] at (6, 0) {$\mathfrak{D}_3=$};
		\node[right] at (7, 0) {,};
		\draw[fill=lightgray,draw=gray,scale=1] (6,-0.25) rectangle (6.5,0.25);
		\draw[fill=white,draw=gray,scale=1] (6.5,-0.25) rectangle (7,0.25);
		\draw[ultra thick,draw=blue] (6,-0.25) rectangle (7,0.25);
		\draw[ultra thick,draw=red] (6,0) -- (7,0);
		\node[left] at (9.5, 0) {$\mathfrak{D}_4=$};
		\node[right] at (10.5, 0) {.};
		\draw[fill=white,draw=gray,scale=1] (9.5,-0.25) rectangle (10,0.25);
		\draw[fill=lightgray,draw=gray,scale=1] (10,-0.25) rectangle (10.5,0.25);
		\draw[ultra thick,draw=blue] (9.5,-0.25) rectangle (10.5,0.25);
	\end{tikzpicture}
\end{align*}
Now given a domino tiling $T$ in $\DD^{(s,r)}(n)$, as shown on the left of Fig.~\ref{fig:bij-1}, we first flip it vertically to get the plot on the right. Then we make a rotation of $135$ degrees clockwise as shown at the bottom. The desired nonintersecting lattice paths $p$ are produced by these newly inserted edges in the dominoes.

\begin{figure}[ht]
	\begin{tikzpicture}[scale=.33]

\draw[ultra thick] (0,1) rectangle (1,-7);
\draw[ultra thick] (1,2) rectangle (2,-5);
\draw[ultra thick] (2,3) rectangle (3,-5);
\draw[ultra thick] (3,4) rectangle (4,-4);
\draw[ultra thick] (4,5) rectangle (5,-2);
\draw[ultra thick] (5,6) rectangle (6,-2);
\draw[ultra thick] (6,7) rectangle (7,-1);
\draw[ultra thick] (7,7) rectangle (8,1);
\draw[ultra thick] (8,6) rectangle (9,1);
\draw[ultra thick] (9,5) rectangle (10,2);
\foreach \i in {0,...,6}
{\draw[fill=white,draw=gray] (\i,\i) rectangle (\i+1,\i+1);}
\foreach \i in {0,...,7}
{\draw[fill=lightgray,draw=gray] (\i,\i-1) rectangle (\i+1,\i-1+1);}
\foreach \i in {0,...,7}
{\draw[fill=white,draw=gray] (\i,\i-2) rectangle (\i+1,\i-2+1);}
\foreach \i in {0,...,8}
{\draw[fill=lightgray,draw=gray] (\i,\i-3) rectangle (\i+1,\i-3+1);}
\foreach \i in {0,...,8}
{\draw[fill=white,draw=gray] (\i,\i-4) rectangle (\i+1,\i-4+1);}
\foreach \i in {0,...,9}
{\draw[fill=lightgray,draw=gray] (\i,\i-5) rectangle (\i+1,\i-5+1);}
\foreach \i in {0,...,9}
{\draw[fill=white,draw=gray] (\i,\i-6) rectangle (\i+1,\i-6+1);}
\foreach \i in {0, 2, 3, 5, 6, 8, 9}
{\draw[fill=lightgray,draw=gray] (\i,\i-7) rectangle (\i+1,\i-7+1);}
\foreach\i\j in {0/-7, 0/-5, 0/-1, 1/0, 2/-3, 3/-4, 3/-2, 3/0, 4/1, 5/-2, 6/-1, 7/2}
{
	\draw[line width=2pt,draw=blue] (\i,\j) rectangle (\i+1,\j+2);
	\draw[line width=2pt,draw=red] (\i,\j+0.5) -- (\i+1,\j+1.5);
}
\foreach\i\j in {2/0, 4/-2}
{
	\draw[line width=2pt,draw=blue] (\i,\j) rectangle (\i+1,\j+2);
	\draw[line width=2pt,draw=red] (\i,\j+1.5) -- (\i+1,\j+0.5);
}
\foreach\i\j in {0/-3, 1/-4, 5/2, 8/3}
{
	\draw[line width=2pt,draw=blue] (\i,\j) rectangle (\i+2,\j+1);
	\draw[line width=2pt,draw=red] (\i,\j+0.5) -- (\i+2,\j+0.5);
}
\foreach\i\j in {0/-2, 1/-5, 1/-1, 2/2, 3/3, 4/0, 4/4, 5/1, 5/3, 5/5, 6/4, 6/6, 7/1, 7/5, 8/2, 8/4}
{
	\draw[line width=2pt,draw=blue] (\i,\j) rectangle (\i+2,\j+1);
}
\foreach\i\j in {0/-6.5, 0/-4.5, 0/-2.5, 0/-0.5, 1/-5.5, 4/-2.5, 7/0.5, 10/3.5}
{
	\filldraw[red] (\i,\j) circle[radius=4pt];
}
	\end{tikzpicture}\qquad\qquad
	\begin{tikzpicture}[scale=.33,xscale=-1,yscale=1]

\draw[ultra thick] (0,1) rectangle (1,-7);
\draw[ultra thick] (1,2) rectangle (2,-5);
\draw[ultra thick] (2,3) rectangle (3,-5);
\draw[ultra thick] (3,4) rectangle (4,-4);
\draw[ultra thick] (4,5) rectangle (5,-2);
\draw[ultra thick] (5,6) rectangle (6,-2);
\draw[ultra thick] (6,7) rectangle (7,-1);
\draw[ultra thick] (7,7) rectangle (8,1);
\draw[ultra thick] (8,6) rectangle (9,1);
\draw[ultra thick] (9,5) rectangle (10,2);
\foreach \i in {0,...,6}
{\draw[fill=white,draw=gray] (\i,\i) rectangle (\i+1,\i+1);}
\foreach \i in {0,...,7}
{\draw[fill=lightgray,draw=gray] (\i,\i-1) rectangle (\i+1,\i-1+1);}
\foreach \i in {0,...,7}
{\draw[fill=white,draw=gray] (\i,\i-2) rectangle (\i+1,\i-2+1);}
\foreach \i in {0,...,8}
{\draw[fill=lightgray,draw=gray] (\i,\i-3) rectangle (\i+1,\i-3+1);}
\foreach \i in {0,...,8}
{\draw[fill=white,draw=gray] (\i,\i-4) rectangle (\i+1,\i-4+1);}
\foreach \i in {0,...,9}
{\draw[fill=lightgray,draw=gray] (\i,\i-5) rectangle (\i+1,\i-5+1);}
\foreach \i in {0,...,9}
{\draw[fill=white,draw=gray] (\i,\i-6) rectangle (\i+1,\i-6+1);}
\foreach \i in {0, 2, 3, 5, 6, 8, 9}
{\draw[fill=lightgray,draw=gray] (\i,\i-7) rectangle (\i+1,\i-7+1);}
\foreach\i\j in {0/-7, 0/-5, 0/-1, 1/0, 2/-3, 3/-4, 3/-2, 3/0, 4/1, 5/-2, 6/-1, 7/2}
{
	\draw[line width=2pt,draw=blue] (\i,\j) rectangle (\i+1,\j+2);
	\draw[line width=2pt,draw=red] (\i,\j+0.5) -- (\i+1,\j+1.5);
}
\foreach\i\j in {2/0, 4/-2}
{
	\draw[line width=2pt,draw=blue] (\i,\j) rectangle (\i+1,\j+2);
	\draw[line width=2pt,draw=red] (\i,\j+1.5) -- (\i+1,\j+0.5);
}
\foreach\i\j in {0/-3, 1/-4, 5/2, 8/3}
{
	\draw[line width=2pt,draw=blue] (\i,\j) rectangle (\i+2,\j+1);
	\draw[line width=2pt,draw=red] (\i,\j+0.5) -- (\i+2,\j+0.5);
}
\foreach\i\j in {0/-2, 1/-5, 1/-1, 2/2, 3/3, 4/0, 4/4, 5/1, 5/3, 5/5, 6/4, 6/6, 7/1, 7/5, 8/2, 8/4}
{
	\draw[line width=2pt,draw=blue] (\i,\j) rectangle (\i+2,\j+1);
}
\foreach\i\j in {0/-6.5, 0/-4.5, 0/-2.5, 0/-0.5, 1/-5.5, 4/-2.5, 7/0.5, 10/3.5}
{
	\filldraw[red] (\i,\j) circle[radius=4pt];
}
	\end{tikzpicture}
	
	\bigskip\bigskip
	\begin{tikzpicture}[scale=.33,xscale=-1,yscale=1,rotate=135,transform shape]

\draw[ultra thick] (0,1) rectangle (1,-7);
\draw[ultra thick] (1,2) rectangle (2,-5);
\draw[ultra thick] (2,3) rectangle (3,-5);
\draw[ultra thick] (3,4) rectangle (4,-4);
\draw[ultra thick] (4,5) rectangle (5,-2);
\draw[ultra thick] (5,6) rectangle (6,-2);
\draw[ultra thick] (6,7) rectangle (7,-1);
\draw[ultra thick] (7,7) rectangle (8,1);
\draw[ultra thick] (8,6) rectangle (9,1);
\draw[ultra thick] (9,5) rectangle (10,2);
\foreach \i in {0,...,6}
{\draw[fill=white,draw=gray] (\i,\i) rectangle (\i+1,\i+1);}
\foreach \i in {0,...,7}
{\draw[fill=lightgray,draw=gray] (\i,\i-1) rectangle (\i+1,\i-1+1);}
\foreach \i in {0,...,7}
{\draw[fill=white,draw=gray] (\i,\i-2) rectangle (\i+1,\i-2+1);}
\foreach \i in {0,...,8}
{\draw[fill=lightgray,draw=gray] (\i,\i-3) rectangle (\i+1,\i-3+1);}
\foreach \i in {0,...,8}
{\draw[fill=white,draw=gray] (\i,\i-4) rectangle (\i+1,\i-4+1);}
\foreach \i in {0,...,9}
{\draw[fill=lightgray,draw=gray] (\i,\i-5) rectangle (\i+1,\i-5+1);}
\foreach \i in {0,...,9}
{\draw[fill=white,draw=gray] (\i,\i-6) rectangle (\i+1,\i-6+1);}
\foreach \i in {0, 2, 3, 5, 6, 8, 9}
{\draw[fill=lightgray,draw=gray] (\i,\i-7) rectangle (\i+1,\i-7+1);}
\foreach\i\j in {0/-7, 0/-5, 0/-1, 1/0, 2/-3, 3/-4, 3/-2, 3/0, 4/1, 5/-2, 6/-1, 7/2}
{
	\draw[line width=2pt,draw=blue] (\i,\j) rectangle (\i+1,\j+2);
	\draw[line width=2pt,draw=red] (\i,\j+0.5) -- (\i+1,\j+1.5);
}
\foreach\i\j in {2/0, 4/-2}
{
	\draw[line width=2pt,draw=blue] (\i,\j) rectangle (\i+1,\j+2);
	\draw[line width=2pt,draw=red] (\i,\j+1.5) -- (\i+1,\j+0.5);
}
\foreach\i\j in {0/-3, 1/-4, 5/2, 8/3}
{
	\draw[line width=2pt,draw=blue] (\i,\j) rectangle (\i+2,\j+1);
	\draw[line width=2pt,draw=red] (\i,\j+0.5) -- (\i+2,\j+0.5);
}
\foreach\i\j in {0/-2, 1/-5, 1/-1, 2/2, 3/3, 4/0, 4/4, 5/1, 5/3, 5/5, 6/4, 6/6, 7/1, 7/5, 8/2, 8/4}
{
	\draw[line width=2pt,draw=blue] (\i,\j) rectangle (\i+2,\j+1);
}
\foreach\i\j in {0/-6.5, 0/-4.5, 0/-2.5, 0/-0.5, 1/-5.5, 4/-2.5, 7/0.5, 10/3.5}
{
	\filldraw[red] (\i,\j) circle[radius=4pt];
}
	\end{tikzpicture}
	\bigskip
	\caption{The bijection $\phi$ in Lemma~\ref{le:bij-1} for a tiling in $\DD^{(2,1)}(4)$}\label{fig:bij-1}
\end{figure}

Likewise, we have a bijection between $\hDD^{(s,r)}(n)$ and $\hLL^{(s,r)}(n)$ in the same manner, which is visualized in Fig.~\ref{fig:bij-2}.

\begin{lemma}\label{le:bij-2}
	For any $s,r\in \mathbb{N}$, there is a bijection
	\begin{align*}
		\begin{array}{cccc}
			\hat{\phi}: & \hDD^{(s,r)}(n) & \to & \hLL^{(s,r)}(n)\\
			& \hT & \mapsto & \hp
		\end{array}
	\end{align*}
	such that
	\begin{align*}
		\#_{\hT}(\frakD_1) = \#_{\hp}(\rightarrow),\qquad \#_{\hT}(\frakD_2) = \#_{\hp}(\uparrow),\qquad \#_{\hT}(\frakD_3) = \#_{\hp}(\nearrow).
	\end{align*}
\end{lemma}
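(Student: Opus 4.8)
The plan is to run the construction of Lemma~\ref{le:bij-1} verbatim, now on the Type~2 domain $\hfrakA_{\lambda^{(s,r)}}(n)$, and to verify only the two features that genuinely distinguish the present statement from the previous one: the location of the endpoints and the H-Delannoy property of the output. First I would define $\hat\phi$ by the identical \emph{local} rule: in a tiling $\hT\in\hDD^{(s,r)}(n)$, insert into each domino $\frakD_1$ and $\frakD_2$ the oriented diagonal red edge, into each $\frakD_3$ the horizontal red edge, and leave $\frakD_4$ empty; then flip the picture vertically and rotate it $135^\circ$ clockwise, exactly as in the Type~1 case, with the result for a Type~2 tiling displayed in Fig.~\ref{fig:bij-2}. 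Under this rigid motion the diagonal edge of $\frakD_1$ becomes an east step, that of $\frakD_2$ a north step, and the horizontal edge of $\frakD_3$ a northeast step, so the required step-count correspondence $\#_{\hT}(\frakD_i)=\#_{\hp}(\cdot)$ is built into the rule and needs no separate argument.

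Next I would check that the inserted edges concatenate into a disjoint union of lattice paths. This is a purely local verification at each interior lattice point of the rotated figure: the chessboard coloring of $\hfrakA_\lambda(n)$ forces the red edges entering and leaving any such point to match up in exactly one way, so no branching occurs and the edges assemble into arcs, while the disjointness of the dominoes then yields the nonintersecting property directly. The starting points are read off from the staircase left boundary and are $(-j,j)$ for $1\le j\le n$, precisely as in the Type~1 case. The endpoints lie on the image of the last diagonal; since the prototype here is $\frakA_{\mathrm{D}}(\lambda_1;2n)$ rather than $\frakA_{\mathrm{D}}(\lambda_1;2n-1)$, that diagonal sits one unit higher after the rotation, which is exactly what moves each terminal height from $n$ to $n+1$ and yields $\hp_j$ running from $(-j,j)$ to $(\lambda_j-j,n+1)$.

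The essential new point — and the step I expect to be the main obstacle — is the H-Delannoy condition, that no path $\hp_j$ ends with an east step. This is precisely where the defining difference of Type~2 enters: we remove the \textbf{circles} of the boundary encoding from the last diagonal, whereas in Type~1 we removed the \textbf{bullets}. Consequently the squares that \emph{survive} in the top diagonal of $\hfrakA_\lambda(n)$ are exactly the bullet squares, i.e.\ the positions of the north edges of the Young diagram of $\lambda$, and each is the terminal cell of a unique path after the rigid motion. The hard part will be to make precise, cell by cell along the surviving top diagonal, that a final east step would force the path to terminate through a $\frakD_1$-domino seated at such a terminal cell, which in turn would require an adjacent circle-marked square — but those have been deleted. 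Hence every surviving terminal square is covered only by a $\frakD_2$- or $\frakD_3$-type domino, whose rotated edge is a north or a northeast step, so no $\hp_j$ ends east.

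Finally, bijectivity follows because every local edge-insertion rule and the flip-and-rotate motion are reversible, so the pair consisting of the endpoint data and the path system reconstructs $\hT$ uniquely; the H-Delannoy constraint established above guarantees that this reconstruction lands in $\hDD^{(s,r)}(n)$ rather than the Type~1 family. Together with the step-count identities this exhibits $\hat\phi$ as the desired bijection.
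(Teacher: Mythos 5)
Your construction coincides with the paper's: the paper defines $\hat{\phi}$ by exactly this edge-insertion-plus-flip-and-rotate rule, and instead of verifying the details it points to Corteel--Huang--Krattenthaler \cite[Sect.~3]{CHK2023} and to Fig.~\ref{fig:bij-2}. Your bookkeeping of start points, endpoint heights, and step counts agrees with that picture. However, the one step that you yourself single out as ``the essential new point'' --- why no path can end with an east step --- is justified by the wrong mechanism, and as sketched that step would fail if you tried to make it precise.

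You claim a terminal east step would force a $\frakD_1$ at the terminal cell, ``which in turn would require an adjacent circle-marked square --- but those have been deleted.'' That is not what blocks it. A path terminates at the midpoint of the right side of a surviving (bullet) cell of the last diagonal of $\hfrakA_{\lambda}(n)$, i.e.\ the diagonal of index $2n$ of the prototype $\frakA_{\mathrm{D}}(\lambda_1;2n)$, which is the \emph{outermost} (white) diagonal. For the final step to be east, that white cell would have to be the top cell of a $\frakD_1$, whose gray bottom cell sits directly below it --- in diagonal $2n+1$. That cell is not a deleted circle square (all deleted circles lie in diagonal $2n$ itself); it simply never existed, because the prototype stops at diagonal $2n$. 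So the correct reason is the extremality and parity of the last diagonal, not the circle-versus-bullet deletion rule: the deletion pattern only determines \emph{which} cells of diagonal $2n$ survive, hence the endpoint coordinates $(\lambda_j-j,\,n+1)$, whereas the H-Delannoy property would hold no matter which cells of diagonal $2n$ survived (deleting bullets instead of circles would still forbid east endings). The contrast with Type~1 confirms this: there the terminal cells lie in diagonal $2n-2$, which is not outermost, so a $\frakD_1$ with bottom cell in diagonal $2n-1$ is available and east endings do occur. The same boundary reason (nonexistence of diagonal $2n+1$) also rules out a $\frakD_4$ covering a surviving terminal cell, which you need in order to know an edge ends there at all. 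Finally, your assertion that each surviving bullet square ``is the terminal cell of a unique path'' silently uses that no path dies in the interior; this follows from the small observation that if an edge ends at the right side of a cell $c$ and the cell $c'$ to its right exists, then $c'$ cannot be the right cell of a $\frakD_4$ (that $\frakD_4$ would have to cover $c$ as well, contradicting that $c$ carries an edge), so the path continues --- a point worth stating rather than leaving implicit.
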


\begin{figure}[ht]
	\begin{tikzpicture}[scale=.33]

\draw[ultra thick] (0,1) rectangle (1,-7);
\draw[ultra thick] (1,2) rectangle (2,-7);
\draw[ultra thick] (2,3) rectangle (3,-5);
\draw[ultra thick] (3,4) rectangle (4,-4);
\draw[ultra thick] (4,5) rectangle (5,-4);
\draw[ultra thick] (5,6) rectangle (6,-2);
\draw[ultra thick] (6,7) rectangle (7,-1);
\draw[ultra thick] (7,7) rectangle (8,-1);
\draw[ultra thick] (8,6) rectangle (9,1);
\draw[ultra thick] (9,5) rectangle (10,2);
\draw[ultra thick] (10,4) rectangle (11,2);
\foreach \i in {0,...,6}
{\draw[fill=white,draw=gray] (\i,\i) rectangle (\i+1,\i+1);}
\foreach \i in {0,...,7}
{\draw[fill=lightgray,draw=gray] (\i,\i-1) rectangle (\i+1,\i-1+1);}
\foreach \i in {0,...,7}
{\draw[fill=white,draw=gray] (\i,\i-2) rectangle (\i+1,\i-2+1);}
\foreach \i in {0,...,8}
{\draw[fill=lightgray,draw=gray] (\i,\i-3) rectangle (\i+1,\i-3+1);}
\foreach \i in {0,...,8}
{\draw[fill=white,draw=gray] (\i,\i-4) rectangle (\i+1,\i-4+1);}
\foreach \i in {0,...,9}
{\draw[fill=lightgray,draw=gray] (\i,\i-5) rectangle (\i+1,\i-5+1);}
\foreach \i in {0,...,9}
{\draw[fill=white,draw=gray] (\i,\i-6) rectangle (\i+1,\i-6+1);}
\foreach \i in {0,...,10}
{\draw[fill=lightgray,draw=gray] (\i,\i-7) rectangle (\i+1,\i-7+1);}
\foreach \i in {1, 4, 7, 10}
{\draw[fill=white,draw=gray] (\i,\i-8) rectangle (\i+1,\i-8+1);}
\foreach\i\j in {0/-7, 0/-5, 0/-1, 2/-3, 2/-1, 3/-2, 3/0, 4/-1, 8/1}
{
	\draw[line width=2pt,draw=blue] (\i,\j) rectangle (\i+1,\j+2);
	\draw[line width=2pt,draw=red] (\i,\j+0.5) -- (\i+1,\j+1.5);
}
\foreach\i\j in {1/-7, 1/-1, 7/-1}
{
	\draw[line width=2pt,draw=blue] (\i,\j) rectangle (\i+1,\j+2);
	\draw[line width=2pt,draw=red] (\i,\j+1.5) -- (\i+1,\j+0.5);
}
\foreach\i\j in {0/-3, 1/-4, 3/-4, 4/1, 5/0, 6/1, 9/2}
{
	\draw[line width=2pt,draw=blue] (\i,\j) rectangle (\i+2,\j+1);
	\draw[line width=2pt,draw=red] (\i,\j+0.5) -- (\i+2,\j+0.5);
}
\foreach\i\j in {0/-2, 1/-5, 1/1, 2/2, 3/-3, 3/3, 4/-2, 4/2, 4/4, 5/-1, 5/3, 5/5, 6/2, 6/4, 6/6, 7/3, 7/5, 8/4, 9/3}
{
	\draw[line width=2pt,draw=blue] (\i,\j) rectangle (\i+2,\j+1);
}
\foreach\i\j in {0/-6.5, 0/-4.5, 0/-2.5, 0/-0.5, 2/-6.5, 5/-3.5, 8/-0.5, 11/2.5}
{
	\filldraw[red] (\i,\j) circle[radius=4pt];
}
	\end{tikzpicture}\qquad\qquad
	\begin{tikzpicture}[scale=.33,xscale=-1,yscale=1]

\draw[ultra thick] (0,1) rectangle (1,-7);
\draw[ultra thick] (1,2) rectangle (2,-7);
\draw[ultra thick] (2,3) rectangle (3,-5);
\draw[ultra thick] (3,4) rectangle (4,-4);
\draw[ultra thick] (4,5) rectangle (5,-4);
\draw[ultra thick] (5,6) rectangle (6,-2);
\draw[ultra thick] (6,7) rectangle (7,-1);
\draw[ultra thick] (7,7) rectangle (8,-1);
\draw[ultra thick] (8,6) rectangle (9,1);
\draw[ultra thick] (9,5) rectangle (10,2);
\draw[ultra thick] (10,4) rectangle (11,2);
\foreach \i in {0,...,6}
{\draw[fill=white,draw=gray] (\i,\i) rectangle (\i+1,\i+1);}
\foreach \i in {0,...,7}
{\draw[fill=lightgray,draw=gray] (\i,\i-1) rectangle (\i+1,\i-1+1);}
\foreach \i in {0,...,7}
{\draw[fill=white,draw=gray] (\i,\i-2) rectangle (\i+1,\i-2+1);}
\foreach \i in {0,...,8}
{\draw[fill=lightgray,draw=gray] (\i,\i-3) rectangle (\i+1,\i-3+1);}
\foreach \i in {0,...,8}
{\draw[fill=white,draw=gray] (\i,\i-4) rectangle (\i+1,\i-4+1);}
\foreach \i in {0,...,9}
{\draw[fill=lightgray,draw=gray] (\i,\i-5) rectangle (\i+1,\i-5+1);}
\foreach \i in {0,...,9}
{\draw[fill=white,draw=gray] (\i,\i-6) rectangle (\i+1,\i-6+1);}
\foreach \i in {0,...,10}
{\draw[fill=lightgray,draw=gray] (\i,\i-7) rectangle (\i+1,\i-7+1);}
\foreach \i in {1, 4, 7, 10}
{\draw[fill=white,draw=gray] (\i,\i-8) rectangle (\i+1,\i-8+1);}
\foreach\i\j in {0/-7, 0/-5, 0/-1, 2/-3, 2/-1, 3/-2, 3/0, 4/-1, 8/1}
{
	\draw[line width=2pt,draw=blue] (\i,\j) rectangle (\i+1,\j+2);
	\draw[line width=2pt,draw=red] (\i,\j+0.5) -- (\i+1,\j+1.5);
}
\foreach\i\j in {1/-7, 1/-1, 7/-1}
{
	\draw[line width=2pt,draw=blue] (\i,\j) rectangle (\i+1,\j+2);
	\draw[line width=2pt,draw=red] (\i,\j+1.5) -- (\i+1,\j+0.5);
}
\foreach\i\j in {0/-3, 1/-4, 3/-4, 4/1, 5/0, 6/1, 9/2}
{
	\draw[line width=2pt,draw=blue] (\i,\j) rectangle (\i+2,\j+1);
	\draw[line width=2pt,draw=red] (\i,\j+0.5) -- (\i+2,\j+0.5);
}
\foreach\i\j in {0/-2, 1/-5, 1/1, 2/2, 3/-3, 3/3, 4/-2, 4/2, 4/4, 5/-1, 5/3, 5/5, 6/2, 6/4, 6/6, 7/3, 7/5, 8/4, 9/3}
{
	\draw[line width=2pt,draw=blue] (\i,\j) rectangle (\i+2,\j+1);
}
\foreach\i\j in {0/-6.5, 0/-4.5, 0/-2.5, 0/-0.5, 2/-6.5, 5/-3.5, 8/-0.5, 11/2.5}
{
	\filldraw[red] (\i,\j) circle[radius=4pt];
}
	\end{tikzpicture}
	
	\bigskip\bigskip
	\begin{tikzpicture}[scale=.33,xscale=-1,yscale=1,rotate=135,transform shape]

\draw[ultra thick] (0,1) rectangle (1,-7);
\draw[ultra thick] (1,2) rectangle (2,-7);
\draw[ultra thick] (2,3) rectangle (3,-5);
\draw[ultra thick] (3,4) rectangle (4,-4);
\draw[ultra thick] (4,5) rectangle (5,-4);
\draw[ultra thick] (5,6) rectangle (6,-2);
\draw[ultra thick] (6,7) rectangle (7,-1);
\draw[ultra thick] (7,7) rectangle (8,-1);
\draw[ultra thick] (8,6) rectangle (9,1);
\draw[ultra thick] (9,5) rectangle (10,2);
\draw[ultra thick] (10,4) rectangle (11,2);
\foreach \i in {0,...,6}
{\draw[fill=white,draw=gray] (\i,\i) rectangle (\i+1,\i+1);}
\foreach \i in {0,...,7}
{\draw[fill=lightgray,draw=gray] (\i,\i-1) rectangle (\i+1,\i-1+1);}
\foreach \i in {0,...,7}
{\draw[fill=white,draw=gray] (\i,\i-2) rectangle (\i+1,\i-2+1);}
\foreach \i in {0,...,8}
{\draw[fill=lightgray,draw=gray] (\i,\i-3) rectangle (\i+1,\i-3+1);}
\foreach \i in {0,...,8}
{\draw[fill=white,draw=gray] (\i,\i-4) rectangle (\i+1,\i-4+1);}
\foreach \i in {0,...,9}
{\draw[fill=lightgray,draw=gray] (\i,\i-5) rectangle (\i+1,\i-5+1);}
\foreach \i in {0,...,9}
{\draw[fill=white,draw=gray] (\i,\i-6) rectangle (\i+1,\i-6+1);}
\foreach \i in {0,...,10}
{\draw[fill=lightgray,draw=gray] (\i,\i-7) rectangle (\i+1,\i-7+1);}
\foreach \i in {1, 4, 7, 10}
{\draw[fill=white,draw=gray] (\i,\i-8) rectangle (\i+1,\i-8+1);}
\foreach\i\j in {0/-7, 0/-5, 0/-1, 2/-3, 2/-1, 3/-2, 3/0, 4/-1, 8/1}
{
	\draw[line width=2pt,draw=blue] (\i,\j) rectangle (\i+1,\j+2);
	\draw[line width=2pt,draw=red] (\i,\j+0.5) -- (\i+1,\j+1.5);
}
\foreach\i\j in {1/-7, 1/-1, 7/-1}
{
	\draw[line width=2pt,draw=blue] (\i,\j) rectangle (\i+1,\j+2);
	\draw[line width=2pt,draw=red] (\i,\j+1.5) -- (\i+1,\j+0.5);
}
\foreach\i\j in {0/-3, 1/-4, 3/-4, 4/1, 5/0, 6/1, 9/2}
{
	\draw[line width=2pt,draw=blue] (\i,\j) rectangle (\i+2,\j+1);
	\draw[line width=2pt,draw=red] (\i,\j+0.5) -- (\i+2,\j+0.5);
}
\foreach\i\j in {0/-2, 1/-5, 1/1, 2/2, 3/-3, 3/3, 4/-2, 4/2, 4/4, 5/-1, 5/3, 5/5, 6/2, 6/4, 6/6, 7/3, 7/5, 8/4, 9/3}
{
	\draw[line width=2pt,draw=blue] (\i,\j) rectangle (\i+2,\j+1);
}
\foreach\i\j in {0/-6.5, 0/-4.5, 0/-2.5, 0/-0.5, 2/-6.5, 5/-3.5, 8/-0.5, 11/2.5}
{
	\filldraw[red] (\i,\j) circle[radius=4pt];
}
	\end{tikzpicture}
	\bigskip
	\caption{The bijection $\hat{\phi}$ in Lemma~\ref{le:bij-2} for a tiling in $\hDD^{(2,1)}(4)$}\label{fig:bij-2}
\end{figure}

In view of Lemmas~\ref{le:bij-1} and \ref{le:bij-2}, we immediately obtain the following relations that lead us to the first part in Theorems~\ref{th:main-d} and \ref{th:main-h}.

\begin{theorem}[$\Rightarrow$ First Part in Theorems~\ref{th:main-d} and \ref{th:main-h}]\label{th:D-L}
	For any $s,r\in \mathbb{N}$,
	\begin{align}
		\cyD_{w_1,w_2,w_3}^{(s,r)}(n) &= \cyL_{w_1,w_2,w_3}^{(s,r)}(n),\\
		\hcyD_{w_1,w_2,w_3}^{(s,r)}(n) &= \hcyL_{w_1,w_2,w_3}^{(s,r)}(n).
	\end{align}
\end{theorem}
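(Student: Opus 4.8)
The plan is to deduce both equalities directly from the weight-preserving bijections already established in Lemmas~\ref{le:bij-1} and \ref{le:bij-2}. The essential observation is that the tiling weight $w_{\mathrm{D}}(T)$ and the path weight $w_{\mathrm{S}}(p)$ are each defined purely in terms of three statistics --- the numbers of dominoes of types $\frakD_1, \frakD_2, \frakD_3$ on the tiling side, and the numbers of east, north, and northeast steps on the path side --- and that the bijection $\phi$ matches these statistics one to one. Thus the identity should reduce to a trivial reindexing of a sum.

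Concretely, I would fix $s, r \in \mathbb{N}$ and a tiling $T \in \DD^{(s,r)}(n)$, and set $p = \phi(T)$. By Lemma~\ref{le:bij-1} we have $\#_T(\frakD_1) = \#_p(\rightarrow)$, $\#_T(\frakD_2) = \#_p(\uparrow)$, and $\#_T(\frakD_3) = \#_p(\nearrow)$. Substituting these into the definitions of $w_{\mathrm{D}}$ and $w_{\mathrm{S}}$ gives $w_{\mathrm{D}}(T) = w_{\mathrm{S}}(\phi(T))$ for every $T$. Summing over $\DD^{(s,r)}(n)$ and using the bijectivity of $\phi$ to reindex the sum by $p \in \LL^{(s,r)}(n)$ then yields
\[
	\cyD_{w_1,w_2,w_3}^{(s,r)}(n) = \sum_{T \in \DD^{(s,r)}(n)} w_{\mathrm{D}}(T) = \sum_{T \in \DD^{(s,r)}(n)} w_{\mathrm{S}}(\phi(T)) = \sum_{p \in \LL^{(s,r)}(n)} w_{\mathrm{S}}(p) = \cyL_{w_1,w_2,w_3}^{(s,r)}(n),
\]
which is the first identity. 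Repeating the same argument verbatim with $\hat{\phi}$ from Lemma~\ref{le:bij-2} in place of $\phi$, and the statistic correspondences $\#_{\hT}(\frakD_i) = \#_{\hp}(\cdot)$ recorded there, establishes the hatted identity $\hcyD_{w_1,w_2,w_3}^{(s,r)}(n) = \hcyL_{w_1,w_2,w_3}^{(s,r)}(n)$.

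There is no genuine obstacle at this step: all the combinatorial difficulty has already been absorbed into the construction and verification of the two bijections, whose statistic preservation is exactly what the lemma statements record. The only point to note is the (essentially trivial) remark that the weight functions factor through precisely those preserved statistics, so that a statistic-matching bijection between the underlying sets is automatically weight-preserving and hence induces equality of the two weighted generating sums. Consequently, once Lemmas~\ref{le:bij-1} and \ref{le:bij-2} are granted, the proof is a one-line reindexing.
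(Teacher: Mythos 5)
Your proposal is correct and matches the paper's own treatment exactly: the paper derives Theorem~\ref{th:D-L} as an immediate consequence of the statistic-preserving bijections in Lemmas~\ref{le:bij-1} and \ref{le:bij-2}, which is precisely your reindexing argument. The paper in fact offers no further detail than this, so your write-up simply makes the implicit one-line reindexing explicit.
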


\section{Delannoy and H-Delannoy determinants}

Following the introduction of our domino tilings and nonintersecting lattice paths, now let us perform a short \emph{recitative} to express the weighted enumerations $\cyL_{w_1,w_2,w_3}^{(s,r)}(n)$ and $\hcyL_{w_1,w_2,w_3}^{(s,r)}(n)$ for nonintersecting lattice paths as determinants involving weighted Delannoy and H-Delannoy numbers.

Let $w_1,w_2,w_3\in \mathbb{C}$ be weights. To begin with, we define the \emph{weighted Delannoy numbers} by
\begin{align}
	D_{w_1,w_2,w_3}(i,j) := \sum_{p_0: (0,0)\overset{D}{\to} (i,j)} w_1^{\#_{p_0}(\rightarrow)} w_2^{\#_{p_0}(\uparrow)} w_3^{\#_{p_0}(\nearrow)},
\end{align}
where the sum runs over all Delannoy paths $p_0$ from $(0,0)$ to $(i,j)$. In particular, they specialize to the usual \emph{Delannoy numbers} by setting $(w_1,w_2,w_3) = (1,1,1)$. In the same way, we define the \emph{weighted H-Delannoy numbers}, which reduce to the usual \emph{H-Delannoy numbers} after the same specialization, by
\begin{align}
	H_{w_1,w_2,w_3}(i,j) := \sum_{\hp_0: (0,0)\overset{H}{\to} (i,j+1)} w_1^{\#_{\hp_0}(\rightarrow)} w_2^{\#_{\hp_0}(\uparrow)} w_3^{\#_{\hp_0}(\nearrow)},
\end{align}
where the sum runs over all H-Delannoy paths $\hp_0$ from $(0,0)$ to $(i,j+1)$.

It is clear that
\begin{align}\label{eq:D-gf}
	\sum_{i,j\ge 0} D_{w_1,w_2,w_3}(i,j) u^i v^j = \frac{1}{1-w_1u-w_2v-w_3uv}.
\end{align}
In addition,
\begin{align*}
	H_{w_1,w_2,w_3}(i,j) = D_{w_1,w_2,w_3}(i,j+1) - w_1 D_{w_1,w_2,w_3}(i-1,j+1),
\end{align*}
which implies that
\begin{align}\label{eq:H-gf}
	\sum_{i,j\ge 0} H_{w_1,w_2,w_3}(i,j) u^i v^j &= (1-w_1 u) v^{-1} \left(\frac{1}{1-w_1u-w_2v-w_3uv} - \frac{1}{1-w_1u}\right)\notag\\
	&= \frac{w_2+w_3u}{1-w_1u-w_2v-w_3uv}.
\end{align}

Now we have the following relations.

\begin{lemma}
	For any $s,r\in \mathbb{N}$,
	\begin{align}
		\cyL_{w_1,w_2,w_3}^{(s,r)}(n) &= \underset{{0\le i,j\le n-1}}{\det} \big(D_{w_1,w_2,w_3}((s+1)i-j+r,j)\big),\label{eq:det-D}\\
		\hcyL_{w_1,w_2,w_3}^{(s,r)}(n) &= \underset{{0\le i,j\le n-1}}{\det} \big(H_{w_1,w_2,w_3}((s+1)i-j+r,j)\big).\label{eq:det-H}
	\end{align}
\end{lemma}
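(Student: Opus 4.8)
The plan is to derive both identities from the \emph{Lindström--Gessel--Viennot (LGV) lemma} applied to the two path systems. The weight $w_{\mathrm{S}}$ is a product over the individual steps, hence multiplicative under concatenation, so LGV applies with each E/N/NE step carrying weight $w_1,w_2,w_3$ respectively. For the Delannoy case, recall $\lambda^{(s,r)}_b = s(n-b)+r$, so the $n$ paths of a member of $\LL^{(s,r)}(n)$ run from the sources $A_a := (-a,a)$ to the sinks $B_b := (s(n-b)+r-b,\,n)$, $a,b\in\{1,\dots,n\}$. First I would establish the \emph{nonpermutability} of this configuration: the identity is the only permutation $\sigma$ for which there exist Delannoy paths joining $A_a$ to $B_{\sigma(a)}$ that are pairwise nonintersecting. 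Granting this, the quantity $\cyL^{(s,r)}_{w_1,w_2,w_3}(n)$ is exactly the left-hand side of LGV, which then equals $\det(N_{ab})_{1\le a,b\le n}$, where $N_{ab}$ is the weighted number of Delannoy paths from $A_a$ to $B_b$.

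The key step is the nonpermutability claim, and I would prove it by the standard ``swapped endpoints force a crossing'' argument. All sources lie on the line $x+y=0$ and all sinks on the line $y=n$, and every admissible step strictly increases $x+y$; as $a$ grows the source $A_a$ moves up and to the left, while as $b$ grows the sink $B_b$ moves to the left. Thus if a matching had an inversion, i.e. indices $a<a'$ joined to sinks $B_{b'},B_{b}$ with $b<b'$, then the path from the lower-right source $A_a$ would have to reach the more-leftward sink $B_{b'}$ while the path from the upper-left source $A_{a'}$ reaches the more-rightward sink $B_b$; two such monotone up-right paths terminating on a common horizontal line must share a lattice point. This forces $\sigma=\mathrm{id}$.

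With the determinant in hand, I would translate the entries. By translation invariance of the step set and weights, $N_{ab}=D_{w_1,w_2,w_3}\big(s(n-b)+r-b+a,\,n-a\big)$. A direct check then shows that, after transposing and reversing the orderings of both the rows and the columns --- none of which alters the determinant --- the matrix $\big(N_{ab}\big)_{1\le a,b\le n}$ becomes precisely $\big(D_{w_1,w_2,w_3}((s+1)i-j+r,\,j)\big)_{0\le i,j\le n-1}$ via the substitution $a=n-i$, $b=n-j$; this yields \eqref{eq:det-D}.

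For \eqref{eq:det-H} the plan is identical, with sources $A_a=(-a,a)$ unchanged and sinks raised to $\hat B_b := (s(n-b)+r-b,\,n+1)$, and with all paths required to be H-Delannoy. The same crossing argument gives nonpermutability (the sinks now sit on $y=n+1$). Since $H_{w_1,w_2,w_3}(i,j)$ counts H-Delannoy paths to $(i,j+1)$, translation invariance gives the weighted count from $A_a$ to $\hat B_b$ as $H_{w_1,w_2,w_3}\big(s(n-b)+r-b+a,\,(n+1-a)-1\big)=H_{w_1,w_2,w_3}\big(s(n-b)+r-b+a,\,n-a\big)$, so the very same reindexing produces \eqref{eq:det-H}. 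The one point needing care --- the \textbf{main obstacle} --- is to confirm that LGV is legitimate for the H-Delannoy class, because the defining constraint ``the path does not end with an east step'' is global. I would resolve this by observing that the sign-reversing involution of LGV swaps the tails of two paths at their first common point, and that the terminal step of each path travels with its tail to the (swapped) sink at height $n+1$; hence both resulting paths inherit a non-east final step and remain H-Delannoy. The involution therefore preserves the class, and the lemma applies verbatim.
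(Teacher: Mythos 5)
Your proposal is correct and follows essentially the same route as the paper: an application of the Lindstr\"om--Gessel--Viennot lemma followed by a determinant-preserving reindexing (transpose plus reversal of rows and columns), and your index computations check out. You in fact supply two details the paper leaves implicit --- the nonpermutability of the source/sink configuration and the verification that the tail-swapping involution preserves the H-Delannoy class --- both of which are argued correctly.
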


\begin{proof}
	It follows from the \emph{Lindstr\"om--Gessel--Viennot lemma} \cite{GV1985, Lin1973} that
	\begin{align*}
		\cyL_{w_1,w_2,w_3}^{(s,r)}(n) &= \underset{{1\le i,j\le n}}{\det} \big(D_{w_1,w_2,w_3}(s(n-i)-i+j+r,n-j)\big)\\
		&= \underset{{0\le i,j\le n-1}}{\det} \big(D_{w_1,w_2,w_3}(s(n-i-1)-i+j+r,n-j-1)\big).
	\end{align*}
	Now in the associated matrix of the latter determinant, we interchange the rows and columns by $(i,j)\mapsto (n-1-i,n-1-j)$, and notice that the determinant stays invariant. Hence, we arrive at \eqref{eq:det-D}. For \eqref{eq:det-H}, the same analysis may be applied.
\end{proof}

\section{Subclasses of KKS determinants}

It remains to connect the Delannoy and H-Delannoy determinants with the desired KKS determinants, and this is the theme of our second \emph{act}.

Let
\begin{align*}
	F(z_1,\ldots,z_k):=\sum_{n_1,\ldots,n_k=-\infty}^{\infty} f_{n_1,\ldots,n_k} z_1^{n_1}\cdots z_k^{n_k}
\end{align*}
be a \emph{Laurent series} in variables $z_1,\ldots,z_k$. We adopt the conventional notation that
\begin{align*}
	[z_1^{n_1}\cdots z_k^{n_k}]F(z_1,\ldots,z_k) := f_{n_1,\ldots,n_k}.
\end{align*}
Furthermore, by separating the variables $z_1,\ldots,z_k$ into two \emph{disjoint} groups $z_{i_1},\ldots,z_{i_l}$ and $z_{i_{l+1}},\ldots,z_{i_{k}}$, the \emph{regular part} of $F$ with respect to the variables $z_{i_1},\ldots,z_{i_l}$ is defined by
\begin{align*}
	\RP_{z_{i_1},\ldots,z_{i_l}} F(z_1,\ldots,z_k) := \sum_{n_{i_1},\ldots,n_{i_l}= 0}^{\infty} \sum_{n_{i_{l+1}},\ldots,n_{i_{k}}=-\infty}^{\infty} f_{n_1,\ldots,n_k} z_1^{n_1}\cdots z_k^{n_k},
\end{align*}
where $n_{i_j}$ represents the exponent of the variable $z_{i_j}$.

Recalling \eqref{eq:D-gf} and \eqref{eq:H-gf}, for the moment we write
\begin{align}
	P_{w_1,w_2,w_3}(u,v) &:= \frac{1}{1-w_1u-w_2v-w_3uv} = \sum_{i,j\ge 0} D_{w_1,w_2,w_3}(i,j) u^i v^j,\\
	\hP_{w_1,w_2,w_3}(u,v) &:= \frac{w_2+w_3u}{1-w_1u-w_2v-w_3uv} = \sum_{i,j\ge 0} H_{w_1,w_2,w_3}(i,j) u^i v^j.
\end{align}
Let us define
\begin{align*}
	\cydje_{w_1,w_2,w_3}^{(s,r)}(i,j) &:= D_{w_1,w_2,w_3}((s+1)i-j+r,j),\\
	\hcydje_{w_1,w_2,w_3}^{(s,r)}(i,j) &:= H_{w_1,w_2,w_3}((s+1)i-j+r,j),
\end{align*}
and in addition,
\begin{align}
	P_{w_1,w_2,w_3}^{(s,r)}(u,v) &:= \sum_{i,j\ge 0} \cydje_{w_1,w_2,w_3}^{(s,r)}(i,j) u^i v^j,\label{eq:dje-gf}\\
	\hP_{w_1,w_2,w_3}^{(s,r)}(u,v) &:= \sum_{i,j\ge 0} \hcydje_{w_1,w_2,w_3}^{(s,r)}(i,j) u^i v^j.\label{eq:hdje-gf}
\end{align}

\begin{lemma}
	For $s,r\ge 0$,
	\begin{align}\label{eq:D-sr-gf}
		&P_{w_1,w_2,w_3}^{(s,r)}(u,v)\notag\\
		&\,\,= \RP_{u} \frac{1}{s+1}\sum_{k=0}^{s} \frac{\big(e^{\frac{2\pi \ii k}{s+1}} u^{\frac{1}{s+1}}\big)^{-r}}{1-w_1\big(e^{\frac{2\pi \ii k}{s+1}} u^{\frac{1}{s+1}}\big)-w_2\big(e^{\frac{2\pi \ii k}{s+1}} u^{\frac{1}{s+1}}\big)v-w_3\big(e^{\frac{2\pi \ii k}{s+1}} u^{\frac{1}{s+1}}\big)^{2}v},
	\end{align}
	and
	\begin{align}\label{eq:H-sr-gf}
		&\hP_{w_1,w_2,w_3}^{(s,r)}(u,v)\notag\\
		&\,\,= \RP_{u} \frac{1}{s+1}\sum_{k=0}^{s} \frac{\big(e^{\frac{2\pi \ii k}{s+1}} u^{\frac{1}{s+1}}\big)^{-r} \big(w_2+w_3\big(e^{\frac{2\pi \ii k}{s+1}} u^{\frac{1}{s+1}}\big)\big)}{1-w_1\big(e^{\frac{2\pi \ii k}{s+1}} u^{\frac{1}{s+1}}\big)-w_2\big(e^{\frac{2\pi \ii k}{s+1}} u^{\frac{1}{s+1}}\big)v-w_3\big(e^{\frac{2\pi \ii k}{s+1}} u^{\frac{1}{s+1}}\big)^{2}v}.
	\end{align}
\end{lemma}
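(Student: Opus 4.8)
The plan is to reduce both identities to a single coefficient-extraction computation, carried out first for the Delannoy generating function $P_{w_1,w_2,w_3}$ and then \emph{verbatim} for $\hP_{w_1,w_2,w_3}$, the only difference being the extra numerator $w_2+w_3u$. First I would rewrite the shifted coefficient $\cydje_{w_1,w_2,w_3}^{(s,r)}(i,j)=D_{w_1,w_2,w_3}((s+1)i-j+r,j)$ as a two-variable coefficient of a single substituted series. The substitution $y\mapsto xy$ in $P_{w_1,w_2,w_3}(x,y)$ converts $\sum_{a,b}D_{w_1,w_2,w_3}(a,b)x^ay^b$ into $\sum_{a,b}D_{w_1,w_2,w_3}(a,b)x^{a+b}y^b$, so that
\[
D_{w_1,w_2,w_3}((s+1)i-j+r,j)=[x^{(s+1)i+r}y^j]\,P_{w_1,w_2,w_3}(x,xy).
\]
Since $P_{w_1,w_2,w_3}(x,xy)$ involves only nonnegative powers of $y$, summing the $y$-coefficients against $v^j$ is just the substitution $y=v$, producing the single-variable (in $x$, with parameter $v$) rational function
\[
G(x):=P_{w_1,w_2,w_3}(x,xv)=\frac{1}{1-w_1x-w_2xv-w_3x^2v},
\]
whose denominator already matches the one appearing in the claimed formula at $x=e^{\frac{2\pi\ii k}{s+1}}u^{\frac{1}{s+1}}$.

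Next I would extract the arithmetic progression $N=(s+1)i+r$ of $x$-coefficients of $G$ by a standard root-of-unity multisection. Writing $\omega=e^{\frac{2\pi\ii}{s+1}}$, the orthogonality $\frac{1}{s+1}\sum_{k=0}^s\omega^{k(N-r)}=[\,N\equiv r\!\!\pmod{s+1}\,]$ gives $\frac{1}{s+1}\sum_{k=0}^s\omega^{-kr}G(\omega^k x)=\sum_{N\ge0,\,N\equiv r}([x^N]G(x))\,x^N$. Substituting $x=u^{\frac{1}{s+1}}$ turns $x^N$ into $u^{N/(s+1)}$, and absorbing the prefactor as $(\omega^k u^{\frac{1}{s+1}})^{-r}=\omega^{-kr}u^{-r/(s+1)}$ shifts every surviving exponent to the \emph{integer} $(N-r)/(s+1)$. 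The terms with $N=(s+1)i+r$, $i\ge0$, reproduce $P_{w_1,w_2,w_3}^{(s,r)}(u,v)=\sum_{i\ge0}([x^{(s+1)i+r}]G)\,u^i$ exactly, so the argument concludes by observing that all remaining terms carry strictly negative powers of $u$ and are annihilated by $\RP_u$.

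The main obstacle — and the step I would write out with care — is the bookkeeping of exactly which coefficients the raw multisection produces versus which ones $P_{w_1,w_2,w_3}^{(s,r)}$ needs. When $r>s$ the residue class $N\equiv r\pmod{s+1}$ contains indices $N<r$, and it is precisely these that $\RP_u$ must discard. I would make this explicit by writing $r=r_0+q(s+1)$ with $0\le r_0\le s$ and checking that the indices $N=r_0+m(s+1)$ with $0\le m\le q-1$ contribute exactly the $u$-powers $-q,\ldots,-1$, while $m\ge q$ gives $i=m-q\ge0$. I should also confirm that every manipulation is legitimate at the level of formal Laurent series in $u$ (the branch of $u^{\frac{1}{s+1}}$ appears only in intermediate expressions, and the sum over all $(s+1)$-th roots restores integrality of the exponents). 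Finally, the H-Delannoy case follows identically upon replacing $P_{w_1,w_2,w_3}$ by $\hP_{w_1,w_2,w_3}$: the substitution yields $\hP_{w_1,w_2,w_3}(x,xv)=(w_2+w_3x)/(1-w_1x-w_2xv-w_3x^2v)$, which supplies the numerator $w_2+w_3(e^{\frac{2\pi\ii k}{s+1}}u^{\frac{1}{s+1}})$ in the stated expression for $\hP_{w_1,w_2,w_3}^{(s,r)}$.
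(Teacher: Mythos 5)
Your proposal is correct and follows essentially the same route as the paper: rewrite $\cydje_{w_1,w_2,w_3}^{(s,r)}(i,j)$ as a coefficient of $P_{w_1,w_2,w_3}(x,xv)$ (the paper's series $\sum_{i,j}D_{w_1,w_2,w_3}(i-j,j)u^iv^j$), apply the $(s+1)$-dissection via roots of unity with the prefactor $\big(e^{2\pi\ii k/(s+1)}u^{1/(s+1)}\big)^{-r}$, and let $\RP_u$ discard the stray terms, with the H-Delannoy case handled verbatim. Your write-up is in fact more explicit than the paper's (which compresses all of this into the phrase ``apply the $(s+1)$-dissection''), notably in the bookkeeping of the negative $u$-powers that arise when $r>s$.
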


\begin{proof}
	We apply the $(s+1)$-dissection with respect to the variable $u$ to the series $\sum_{i,j\ge 0} D_{w_1,w_2,w_3}(i-j,j)u^i v^j$ and obtain that
	\begin{align*}
		&\sum_{i,j\ge 0} \cydje_{w_1,w_2,w_3}^{(s,r)}(i,j) u^i v^j\\
		&\qquad = \RP_{u} \frac{1}{s+1}\sum_{k=0}^{s}\big(e^{\frac{2\pi \ii k}{s+1}} u^{\frac{1}{s+1}}\big)^{-r}\sum_{i,j\ge 0} D_{w_1,w_2,w_3}(i-j,j) \big(e^{\frac{2\pi \ii k}{s+1}} u^{\frac{1}{s+1}}\big)^i v^j\\
		&\qquad= \RP_{u} \frac{1}{s+1}\sum_{k=0}^{s}\big(e^{\frac{2\pi \ii k}{s+1}} u^{\frac{1}{s+1}}\big)^{-r} P_{w_1,w_2,w_3}\Big(\big(e^{\frac{2\pi \ii k}{s+1}} u^{\frac{1}{s+1}}\big),\big(e^{\frac{2\pi \ii k}{s+1}} u^{\frac{1}{s+1}}\big)v\Big),
	\end{align*}
	which is \eqref{eq:D-sr-gf}. For \eqref{eq:H-sr-gf}, we use a similar argument.
\end{proof}

To determine the relations between the Delannoy and H-Delannoy determinants and our desired KKS determinants, the takeaway is a trick that has been widely utilized in the literature. In what follows, we provide an explicit form of a specialization of \cite[p.~24, Lemmas~4.1 and 4.2]{DF2021} for practical purposes.

\begin{lemma}\label{le:right-action}
	Let $F(u,v)\in \mathbb{C}[[u,v]]$. Suppose that $\alpha(v)$ and $\beta(v)$ are both in $\mathbb{C}[[v]]$ with constant term equal to $1$. Then
	\begin{align*}
		\underset{{0\le i,j\le n-1}}{\det} \big([u^i v^j] F(u,v)\big) = \underset{{0\le i,j\le n-1}}{\det} \big([u^i v^j] F^{\dagger}(u,v)\big),
	\end{align*}
	where
	\begin{align*}
		F^{\dagger}(u,v) = \alpha(v)\cdot  F\big(u,v\beta(v)\big).
	\end{align*}
\end{lemma}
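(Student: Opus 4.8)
The plan is to show that the matrix of coefficients of $F^{\dagger}$ is obtained from that of $F$ by \emph{right}-multiplication by an upper-triangular matrix with $1$'s on the diagonal, so that the determinant is unchanged. Write $F(u,v)=\sum_{i,j\ge 0}f_{i,j}u^iv^j$ with $f_{i,j}=[u^iv^j]F(u,v)$. The key observation is that the substitution $v\mapsto v\beta(v)$ together with multiplication by $\alpha(v)$ acts only on the $v$-variable, hence only mixes columns (the $j$-index) while leaving the row index $i$ untouched. Concretely, I would first expand
\begin{align*}
	F^{\dagger}(u,v)=\alpha(v)\,F\big(u,v\beta(v)\big)=\sum_{i\ge 0}u^i\,\alpha(v)\sum_{j\ge 0}f_{i,j}\big(v\beta(v)\big)^{j},
\end{align*}
so that $[u^iv^{j'}]F^{\dagger}(u,v)=\sum_{j\ge 0}f_{i,j}\,c_{j,j'}$, where $c_{j,j'}:=[v^{j'}]\,\alpha(v)\big(v\beta(v)\big)^{j}$ depends on $j,j'$ but \emph{not} on $i$.

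The next step is to record the two properties of the matrix $C=(c_{j,j'})_{j,j'\ge 0}$ that make it harmless. First, since $\alpha,\beta\in\mathbb{C}[[v]]$, the product $\alpha(v)\big(v\beta(v)\big)^{j}$ is a power series whose lowest-order term has degree exactly $j$ (because $v\beta(v)$ has order $1$ as $\beta(0)=1$ and $\alpha(0)=1$). Hence $c_{j,j'}=0$ whenever $j'<j$, i.e.\ $C$ is upper-triangular when indexed by $(j,j')$. Second, the coefficient of $v^{j}$ in $\alpha(v)\big(v\beta(v)\big)^{j}$ is $\alpha(0)\beta(0)^{j}=1$, so all diagonal entries satisfy $c_{j,j}=1$. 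Therefore, restricting to $0\le j,j'\le n-1$, the matrix $C$ is upper-unitriangular and the matrix identity
\begin{align*}
	\big([u^iv^{j'}]F^{\dagger}(u,v)\big)_{0\le i,j'\le n-1}=\big([u^iv^{j}]F(u,v)\big)_{0\le i,j\le n-1}\cdot\big(c_{j,j'}\big)_{0\le j,j'\le n-1}
\end{align*}
holds; taking determinants and using $\det C=1$ gives the claim.

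The one subtlety I would be careful about is that the finite $(n\times n)$ products are genuinely well defined and agree with the infinite-matrix product. Because $C$ is upper-triangular, the entry $[u^iv^{j'}]F^{\dagger}$ with $j'\le n-1$ only receives contributions from $f_{i,j}$ with $j\le j'\le n-1$; no coefficients outside the displayed $n\times n$ block enter. Thus the factorization restricts cleanly to the top-left $n\times n$ corner, and the Cauchy--Binet/multiplicativity of the determinant applies to honest finite matrices. I expect this bookkeeping — confirming triangularity and that the truncation to $0\le j,j'\le n-1$ loses nothing — to be the only point requiring care; the rest is the direct coefficient extraction above. The hypotheses $\alpha(0)=\beta(0)=1$ are exactly what is needed for both the unit diagonal and the triangularity, so no further assumptions are required.
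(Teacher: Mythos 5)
Your proof is correct and is essentially the paper's own argument: the paper also observes that the coefficient matrix of $F^{\dagger}$ is that of $F$ multiplied on the right by an upper-unitriangular matrix, which it packages as the coefficient matrix of $\alpha(v)/(1-uv\beta(v))$ — exactly your matrix $C$, since $[u^j v^{j'}]\,\alpha(v)/(1-uv\beta(v)) = [v^{j'}]\,\alpha(v)\big(v\beta(v)\big)^j = c_{j,j'}$. Your additional check that the factorization truncates cleanly to the top-left $n\times n$ block is a point the paper leaves implicit, but it is the same proof.
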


\begin{proof}
	It is clear that the coefficient matrix of $\alpha(v)/(1-uv\beta(v))$ is an upper triangular matrix with diagonal entries equal to $1$. To obtain the claimed relation, we notice that by multiplying this upper triangular matrix with the coefficient matrix of $F(u,v)$ from the right, we exactly arrive at the coefficient matrix of $F^{\dagger}(u,v)$.
\end{proof}

We have a parallel result that will not be used in this work but is of independent interest.

\begin{lemma}\label{le:left-action}
	Let $F(u,v)\in \mathbb{C}[[u,v]]$. Suppose that $\alpha(u)$ and $\beta(u)$ are both in $\mathbb{C}[[u]]$ with constant term equal to $1$. Then
	\begin{align*}
		\underset{{0\le i,j\le n-1}}{\det} \big([u^i v^j] F(u,v)\big) = \underset{{0\le i,j\le n-1}}{\det} \big([u^i v^j] F^{\ddagger}(u,v)\big),
	\end{align*}
	where
	\begin{align*}
		F^{\ddagger}(u,v) = \alpha(u)\cdot  F\big(u\beta(u),v\big).
	\end{align*}
\end{lemma}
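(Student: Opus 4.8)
The plan is to follow the proof of Lemma~\ref{le:right-action} line by line, merely exchanging the roles of $u$ and $v$: there the passage from $F$ to $F^{\dagger}$ was realized by multiplying the coefficient matrix of $F$ on the \emph{right} by an upper triangular matrix, and here the passage from $F$ to $F^{\ddagger}$ should be realized by multiplying it on the \emph{left} by a lower triangular matrix. Writing $F(u,v)=\sum_{i,j\ge 0}f_{i,j}u^iv^j$, I would first expand
\begin{align*}
	F^{\ddagger}(u,v)=\alpha(u)\,F\big(u\beta(u),v\big)=\sum_{i,j\ge 0} f_{i,j}\,\big(\alpha(u)\beta(u)^i\big)\,u^i v^j,
\end{align*}
so that $[u^m v^j]F^{\ddagger}(u,v)=\sum_{i\ge 0}\big([u^{m-i}]\alpha(u)\beta(u)^i\big)\,f_{i,j}$.

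Next I would identify the linear map sending the coefficient array of $F$ to that of $F^{\ddagger}$ with an explicit triangular matrix, in analogy with the series $\alpha(v)/(1-uv\beta(v))$ used before. Here the relevant series is $\alpha(u)/(1-uv\beta(u))=\sum_{i\ge 0}\alpha(u)\beta(u)^i u^i v^i$, whose coefficient matrix $L$, with $(m,i)$-entry $L_{m,i}=[u^m v^i]\,\alpha(u)/(1-uv\beta(u))=[u^{m-i}]\big(\alpha(u)\beta(u)^i\big)$, is lower triangular with $1$'s on the diagonal since $\alpha$ and $\beta$ have constant term $1$. The display above then reads exactly as the statement that the coefficient matrix of $F^{\ddagger}$ equals $L$ times the coefficient matrix $M$ of $F$.

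It remains to descend from this identity of infinite matrices to the asserted equality of $n\times n$ determinants, and this truncation step is where I expect the only real care to be needed. Because $L$ is lower triangular, the entry $(LM)_{m,j}$ with $m,j\le n-1$ involves only $L_{m,i}$ and $f_{i,j}$ with $0\le i\le m\le n-1$; hence the leading $n\times n$ block of $LM$ is the product of the leading $n\times n$ blocks of $L$ and of $M$. The leading block of $L$ is again lower triangular with unit diagonal, so its determinant is $1$, and multiplicativity of the determinant gives the claim. As an even shorter route, one may instead transpose: the coefficient matrix of $F(v,u)$ is the transpose of that of $F(u,v)$, so applying Lemma~\ref{le:right-action} to $H(u,v):=F(v,u)$ with the same $\alpha,\beta$ (now viewed in $\mathbb{C}[[v]]$) and transposing back recovers Lemma~\ref{le:left-action} with no new computation.
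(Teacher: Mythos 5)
Your proof is correct and follows essentially the same route as the paper: you identify the coefficient matrix of $\alpha(u)/(1-u v\beta(u))$ as lower triangular with unit diagonal and realize the passage from $F$ to $F^{\ddagger}$ as left multiplication by it, only spelling out the truncation-to-$n\times n$-blocks step that the paper leaves implicit. Your alternative via transposing and invoking Lemma~\ref{le:right-action} is also valid, but the main argument matches the paper's proof.
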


\begin{proof}
	This time we notice that the coefficient matrix of $\alpha(u)/(1-uv\beta(u))$ is lower triangular with diagonal entries equal to $1$. Now multiplying this matrix with the coefficient matrix of $F(u,v)$ from the left leads us to the desired claim.
\end{proof}

Recall that to define the KKS determinants, we have written
\begin{align*}
	\cyb_{a,b,c,d}^{(m,l)}(i,j) = l^{j+b}\binom{mi+j+c}{mi+a} + \binom{mi-j+d}{mi+a}.
\end{align*}
Let
\begin{align*}
	B_{a,b,c,d}^{(m,l)}(u,v) := \sum_{i,j\ge 0} \cyb_{a,b,c,d}^{(m,l)}(i,j) u^i v^j.
\end{align*}

\begin{lemma}\label{le:B-gf}
	For $m\ge 1$ and $a\ge 0$,
	\begin{align}\label{eq:B-gf}
		B_{a,b,c,d}^{(m,l)}(u,v) &= \RP_{u}\frac{l^b}{m} \sum_{k=0}^{m-1} \frac{\big(e^{\frac{2\pi \ii k}{m}} u^{\frac{1}{m}}\big)^{-a}}{\big(1-e^{\frac{2\pi \ii k}{m}} u^{\frac{1}{m}}\big)^{c-a} \big(1-lv-e^{\frac{2\pi \ii k}{m}} u^{\frac{1}{m}}\big)}\notag\\
		&\quad + \RP_{u} \frac{1}{m} \sum_{k=0}^{m-1} \frac{\big(e^{\frac{2\pi \ii k}{m}} u^{\frac{1}{m}}\big)^{-a}}{\big(1-e^{\frac{2\pi \ii k}{m}} u^{\frac{1}{m}}\big)^{1+d-a} \big(1-v+ve^{\frac{2\pi \ii k}{m}} u^{\frac{1}{m}}\big)}.
	\end{align}
\end{lemma}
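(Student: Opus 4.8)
The plan is to reduce the claim, term by term, to the same $m$-dissection technique already used in the preceding dissection lemma, the only genuinely new ingredient being two single-variable generating-function identities for binomial coefficients. Since $\cyb_{a,b,c,d}^{(m,l)}(i,j)$ is a sum of two terms, $B_{a,b,c,d}^{(m,l)}(u,v)$ splits accordingly, and the two summands on the right-hand side of \eqref{eq:B-gf} will emerge one from each term.

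First I would treat $v$ as an inert parameter and produce, for each term, a single-variable (formal Laurent) series in a variable $x$ whose coefficient of $x^N$, for every integer $N\ge -a$, reproduces the relevant binomial coefficient with $N$ in place of $mi$. For the first term, expanding the geometric factor as $\tfrac{1}{1-lv-x}=\sum_{j\ge0}(lv)^j(1-x)^{-j-1}$ and then reading off coefficients through $(1-x)^{-s}=\sum_{M\ge0}\binom{M+s-1}{M}x^M$ should give
\begin{align*}
	\frac{l^b x^{-a}}{(1-x)^{c-a}(1-lv-x)} = \sum_{j\ge0}\sum_{N\ge -a} l^{j+b}\binom{N+j+c}{N+a}\,x^N v^j,
\end{align*}
where the binomials are interpreted by the stated polynomial convention. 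For the second term, expanding $\tfrac{1}{1-v+vx}=\sum_{j\ge0}v^j(1-x)^j$ and extracting coefficients by means of $[x^p](1-x)^\alpha=\binom{p-\alpha-1}{p}$ (the generalized binomial theorem together with upper negation, both valid under the polynomial convention) should yield
\begin{align*}
	\frac{x^{-a}}{(1-x)^{1+d-a}(1-v+vx)} = \sum_{j\ge0}\sum_{N\ge -a}\binom{N-j+d}{N+a}\,x^N v^j.
\end{align*}
These two identities are the crux of the argument.

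With them in hand, I would invoke the multisection identity: for a formal series $f(x)=\sum_{N}a_N x^N$ one has $\tfrac{1}{m}\sum_{k=0}^{m-1} f\big(e^{2\pi \ii k/m}u^{1/m}\big)=\sum_{i}a_{mi}u^i$, exactly as in the $(s+1)$-dissection carried out in the preceding lemma. Applying this in $x$ to each of the two series (with $v$ merely a spectator) isolates the subseries indexed by $N\equiv 0\pmod m$, i.e. $N=mi$; the substitution $x=e^{2\pi \ii k/m}u^{1/m}$ then turns the two closed forms above into precisely the two summands of \eqref{eq:B-gf}. Finally, $\RP_u$ discards the contributions of the range $-a\le N<0$, which appear as negative integer powers of $u$, so that what remains is $\sum_{i,j\ge0}\cyb_{a,b,c,d}^{(m,l)}(i,j)\,u^i v^j=B_{a,b,c,d}^{(m,l)}(u,v)$, as required.

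The step I expect to be most delicate is the bookkeeping behind the two coefficient identities. The rational closed forms naturally encode all integer exponents $N\ge -a$ rather than only the combinatorially meaningful range $N\ge 0$, so I must check that the coefficient extraction is correct for every such $N$ under the polynomial binomial convention, and that the spurious terms with $N<0$ are exactly those removed by $\RP_u$ after the dissection. By contrast, the fractional-power substitution $x=e^{2\pi \ii k/m}u^{1/m}$ is a purely formal manipulation, justified verbatim as in the preceding dissection lemma, and the splitting of $B_{a,b,c,d}^{(m,l)}$ into two pieces is immediate.
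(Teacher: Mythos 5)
Your proof is correct and essentially identical to the paper's: both split $B_{a,b,c,d}^{(m,l)}(u,v)$ into its two constituent sums, expand each via the binomial-series identity \eqref{eq:bin-gf} (with upper negation handling the second term) together with a geometric series in $j$, and then apply the $m$-dissection by roots of unity, with $\RP_u$ discarding exactly the spurious terms coming from exponents $-a\le N<0$. The only difference is the order of operations---you sum over $j$ first to obtain a closed bivariate rational form and dissect once, whereas the paper dissects at each fixed $j$ and sums the geometric series afterwards---an immaterial reordering.
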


\begin{proof}
	Note that
	\begin{align*}
		\sum_{i,j\ge 0} l^{j+b}\binom{mi+j+c}{mi+a} u^i v^j = l^b \sum_{j\ge 0} (lv)^j \sum_{i\ge 0} \binom{mi+a+j+c-a}{mi+a} u^i.
	\end{align*}
	For the inner sum, we apply the $m$-dissection to the following variant of the \emph{binomial theorem}:
	\begin{align}\label{eq:bin-gf}
		\sum_{i\ge 0} \binom{i+X}{i} u^i = \frac{1}{(1-u)^{1+X}}.
	\end{align}
	Thus,
	\begin{align*}
		\sum_{i\ge 0} \binom{mi+a+j+c-a}{mi+a} u^i = \RP_{u} \frac{1}{m} \sum_{k=0}^{m-1} \frac{\big(e^{\frac{2\pi \ii k}{m}} u^{\frac{1}{m}}\big)^{-a}}{\big(1-e^{\frac{2\pi \ii k}{m}} u^{\frac{1}{m}}\big)^{j+1+c-a}}.
	\end{align*}
	It follows that
	\begin{align*}
		&\sum_{i,j\ge 0} l^{j+b}\binom{mi+j+c}{mi+a} u^i v^j\\
		&\qquad= \RP_{u} \frac{l^b}{m} \sum_{k=0}^{m-1} \frac{\big(e^{\frac{2\pi \ii k}{m}} u^{\frac{1}{m}}\big)^{-a}}{\big(1-e^{\frac{2\pi \ii k}{m}} u^{\frac{1}{m}}\big)^{1+c-a}} \sum_{j\ge 0} \left(\frac{lv}{1-e^{\frac{2\pi \ii k}{m}} u^{\frac{1}{m}}}\right)^j\\
		&\qquad = \RP_{u} \frac{l^b}{m} \sum_{k=0}^{m-1} \frac{\big(e^{\frac{2\pi \ii k}{m}} u^{\frac{1}{m}}\big)^{-a}}{\big(1-e^{\frac{2\pi \ii k}{m}} u^{\frac{1}{m}}\big)^{c-a} \big(1-lv-e^{\frac{2\pi \ii k}{m}} u^{\frac{1}{m}}\big)}.
	\end{align*}
	In a similar way, we have
	\begin{align*}
		&\sum_{i,j\ge 0} \binom{mi-j+d}{mi+a} u^i v^j\\
		&\qquad = \RP_{u} \frac{1}{m} \sum_{k=0}^{m-1} \frac{\big(e^{\frac{2\pi \ii k}{m}} u^{\frac{1}{m}}\big)^{-a}}{\big(1-e^{\frac{2\pi \ii k}{m}} u^{\frac{1}{m}}\big)^{1+d-a} \big(1-v+ve^{\frac{2\pi \ii k}{m}} u^{\frac{1}{m}}\big)}.
	\end{align*}
	The claimed relation then follows.
\end{proof}

We close this section with the following relations, from which the second part in Theorems~\ref{th:main-d} and \ref{th:main-h} can be confirmed.

\begin{theorem}[$=$ Second Part in Theorems~\ref{th:main-d} and \ref{th:main-h}]\label{th:L-B}
	For $m\ge 1$ and $a\ge 0$,
	\begin{align}
		\cyL_{1,l-1,1}^{(m-1,a)}(n) &= \tfrac{1}{2} \cyB_{a,0,a,a}^{(m,l)}(n),\label{eq:L-B-D}\\
		\hcyL_{1,l-1,1}^{(m-1,a)}(n) &= \cyB_{a+1,1,a+1,a-1}^{(m,l)}(n).\label{eq:L-B-H}
	\end{align}
\end{theorem}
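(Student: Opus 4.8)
The plan is to realize both sides of \eqref{eq:L-B-D} and \eqref{eq:L-B-H} as determinants of coefficient matrices of explicit bivariate generating functions and then to connect the two generating functions by a single right action of the type in Lemma~\ref{le:right-action}. On the path side, combining \eqref{eq:det-D} and \eqref{eq:det-H} with the dissection formulas \eqref{eq:D-sr-gf} and \eqref{eq:H-sr-gf} (specialized to $(w_1,w_2,w_3)=(1,l-1,1)$, $s=m-1$, $r=a$) writes $\cyL_{1,l-1,1}^{(m-1,a)}(n)$ and $\hcyL_{1,l-1,1}^{(m-1,a)}(n)$ as $\det_{0\le i,j\le n-1}\big([u^iv^j]\,\RP_u\tfrac1m\sum_k(\cdots)\big)$, where each summand is indexed by a root of unity and built from $\zeta=e^{2\pi\ii k/m}u^{1/m}$. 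By Lemma~\ref{le:B-gf} the KKS determinants $\cyB_{a,0,a,a}^{(m,l)}(n)$ and $\cyB_{a+1,1,a+1,a-1}^{(m,l)}(n)$ already come in exactly this shape. Since the transformation I intend to apply involves only $v$, it commutes with $\RP_u\tfrac1m\sum_k$, so it suffices to compare the two families of summands as rational functions of the single formal parameter $x=\zeta$.

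First I would simplify all four summands. Writing $\rho=\tfrac{x(l-1+x)}{1-x}$, $\sigma=\tfrac{l}{1-x}$ and $\tau=1-x$, a direct manipulation of \eqref{eq:D-sr-gf}, \eqref{eq:H-sr-gf} and \eqref{eq:B-gf} puts them in the ``geometric'' forms
\begin{align*}
P_{1,l-1,1}^{(m-1,a)}&\leftrightarrow \frac{x^{-a}}{1-x}\cdot\frac{1}{1-\rho v}, & B_{a,0,a,a}^{(m,l)}&\leftrightarrow \frac{x^{-a}}{1-x}\Big(\frac{1}{1-\sigma v}+\frac{1}{1-\tau v}\Big),\\
\hP_{1,l-1,1}^{(m-1,a)}&\leftrightarrow \frac{x^{-a-1}\rho}{1-\rho v}, & B_{a+1,1,a+1,a-1}^{(m,l)}&\leftrightarrow x^{-a-1}\Big(\frac{\sigma}{1-\sigma v}+\frac{\tau}{1-\tau v}\Big),
\end{align*}
where $\leftrightarrow$ abbreviates ``single summand of''. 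The crucial algebraic fact is that $\sigma\tau=l$ and $\sigma+\tau=\rho+l+1$, so $\sigma,\tau$ are the roots of $T^2-(\rho+l+1)T+l=0$; this is what will let a substitution $v\mapsto v\beta(v)$ convert the single pole of the path-side summand into the two poles of the KKS-side summand.

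Next I would determine $\beta$ and $\alpha$. Matching denominators forces $\beta(v)=\tfrac{1}{(1-v)(1-lv)}$, since this makes $1-\rho v\beta(v)$ proportional to $(1-\sigma v)(1-\tau v)$. With $\beta$ fixed, taking $\alpha(v)=\tfrac{1-lv^2}{(1-v)(1-lv)}$ in the D-case and $\alpha(v)=\tfrac{1-lv^2}{(1-v)^2(1-lv)^2}$ in the H-case (both with constant term $1$) yields the exact single-summand identities
\begin{align*}
\alpha(v)\,\frac{x^{-a}}{1-x}\cdot\frac{1}{1-\rho v\beta(v)}&=\frac{x^{-a}}{1-x}\Big(\frac{1}{1-\sigma v}+\frac{1}{1-\tau v}\Big)-\frac{x^{-a}}{1-x},\\
\alpha(v)\,\frac{x^{-a-1}\rho}{1-\rho v\beta(v)}&=x^{-a-1}\Big(\frac{\sigma}{1-\sigma v}+\frac{\tau}{1-\tau v}\Big)-x^{-a-1}\,\frac{(1+l)-2lv}{(1-v)(1-lv)}.
\end{align*}
Applying Lemma~\ref{le:right-action} with these $\alpha,\beta$ then shows that the right-action image of the path-side generating function differs from the KKS-side generating function only by the indicated correction term, while keeping the determinant unchanged.

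Finally I would analyze each correction through $\RP_u$, using that $[u^i]\RP_u\tfrac1m\sum_k f(\zeta)=[x^{mi}]f(x)$ for $i\ge0$. In the H-case the correction is $x^{-a-1}$ times a power series in $v$ with scalar coefficients; as $a+1\ge1$ this is a pure negative power of $u$, so it is annihilated by $\RP_u\tfrac1m\sum_k$, the two coefficient matrices coincide, and \eqref{eq:L-B-H} follows with no extra factor. In the D-case the correction $\tfrac{x^{-a}}{1-x}$ is $v$-independent, so it touches only the $j=0$ column; under $\RP_u$ it contributes the all-ones vector, turning the all-ones zeroth column of the path-side matrix into the all-twos zeroth column of the KKS-side matrix while leaving the remaining columns intact, and multilinearity of the determinant in that column produces the factor $\tfrac12$ in \eqref{eq:L-B-D}. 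The main obstacle is precisely the guessing and verification of the pair $(\alpha,\beta)$ together with the check that the leftover term lands exactly in the part of the Laurent expansion that $\RP_u$ kills (H-case) or cleanly controls in a single column (D-case); once that is in place, everything else is routine bookkeeping.
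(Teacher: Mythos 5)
Your proposal is correct and follows essentially the same route as the paper: the same substitution $v\mapsto v/((1-v)(1-lv))$ with the same prefactors $\alpha(v)$, an application of Lemma~\ref{le:right-action}, and the same treatment of the leftover terms (the $v$-independent correction $\tfrac{1}{1-u}$ producing the factor $\tfrac12$ in \eqref{eq:L-B-D}, and the pure negative power $u^{-(a+1)/m}$-type correction killed by $\RP_u$ in \eqref{eq:L-B-H}). The only difference is presentational: you organize the computation summand-by-summand in the root-of-unity variable with the $\rho,\sigma,\tau$ partial-fraction bookkeeping, whereas the paper states the equivalent rational-function identities for $G$ and $\widehat{G}$ directly in $u$ before dissecting.
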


\begin{proof}
	We first show \eqref{eq:L-B-D}. Let
	\begin{align*}
		G(u,v):=\frac{u^{-a}}{1-u-(l-1)uv-u^2v}.
	\end{align*}
	A direct computation reveals that
	\begin{align*}
		&\frac{1-lv^2}{(1-v)(1-lv)}G\left(u,\frac{v}{(1-v)(1-lv)}\right)\\
		&\qquad = \frac{u^{-a}}{1-lv-u} + \frac{u^{-a}}{(1-u)(1-v+uv)} - \frac{u^{-a}}{1-u}.
	\end{align*}
	Now on both sides, we replace $u$ with $e^{\frac{2\pi \ii k}{m}} u^{\frac{1}{m}}$ for each $k$ with $0\le k\le m-1$ and sum over these $k$. Dividing the sum by $m$ and applying $\RP_{u}$, then in light of \eqref{eq:D-sr-gf}, the left-hand side becomes
	\begin{align*}
		\frac{1-lv^2}{(1-v)(1-lv)} P_{1,l-1,1}^{(m-1,a)}\left(u,\frac{v}{(1-v)(1-lv)}\right),
	\end{align*}
	while it follows from \eqref{eq:B-gf} that the right-hand side becomes
	\begin{align*}
		B_{a,0,a,a}^{(m,l)}(u,v) - \RP_{u} \frac{1}{m}\sum_{k=0}^{m-1} \frac{\big(e^{\frac{2\pi \ii k}{m}} u^{\frac{1}{m}}\big)^{-a}}{1-e^{\frac{2\pi \ii k}{m}} u^{\frac{1}{m}}}.
	\end{align*}
	Note that
	\begin{align*}
		\RP_{u} \frac{1}{m}\sum_{k=0}^{m-1} \frac{\big(e^{\frac{2\pi \ii k}{m}} u^{\frac{1}{m}}\big)^{-a}}{1-e^{\frac{2\pi \ii k}{m}} u^{\frac{1}{m}}} = \RP_{u} \sum_{j\ge -a} \frac{1}{m} \sum_{k=0}^{m-1} \big(e^{\frac{2\pi \ii k}{m}} u^{\frac{1}{m}}\big)^{j} = \sum_{\substack{j\ge 0\\m\mid j}} u^{\frac{j}{m}} = \frac{1}{1-u},
	\end{align*}
	where we have invoked the assumption that $a\ge 0$ for the second equality. Thus,
	\begin{align}\label{eq:P-B-relation}
		\frac{1-lv^2}{(1-v)(1-lv)} P_{1,l-1,1}^{(m-1,a)}\left(u,\frac{v}{(1-v)(1-lv)}\right) = B_{a,0,a,a}^{(m,l)}(u,v) - \frac{1}{1-u}.
	\end{align}
	Finally, we recall from \eqref{eq:det-D} and \eqref{eq:dje-gf},
	\begin{align*}
		\cyL_{1,l-1,1}^{(m-1,a)}(n) = \underset{{0\le i,j\le n-1}}{\det} \big(\cydje_{1,l-1,1}^{(m-1,a)}(i,j)\big) = \underset{{0\le i,j\le n-1}}{\det} \big([u^i v^j] P_{1,l-1,1}^{(m-1,a)}(u,v)\big).
	\end{align*}
	Therefore, an application of Lemma~\ref{le:right-action} gives
	\begin{align*}
		\cyL_{1,l-1,1}^{(m-1,a)}(n) = \underset{{0\le i,j\le n-1}}{\det} \left([u^i v^j] \frac{1-lv^2}{(1-v)(1-lv)} P_{1,l-1,1}^{(m-1,a)}\left(u,\frac{v}{(1-v)(1-lv)}\right)\right),
	\end{align*}
	which, with recourse to \eqref{eq:P-B-relation}, further yields that
	\begin{align*}
		\cyL_{1,l-1,1}^{(m-1,a)}(n) = \underset{{0\le i,j\le n-1}}{\det} \big(\cyb_{a,0,a,a}^{(m,l)}(i,j) - \delta_{0,j}\big),
	\end{align*}
	where $\delta_{0,j}$ is the \emph{Kronecker delta}. Note that for every $i\ge 0$,
	\begin{align*}
		\cyb_{a,0,a,a}^{(m,l)}(i,0) = l^{0+0}\binom{mi+0+a}{mi+a} + \binom{mi-0+a}{mi+a} = 2,
	\end{align*}
	so that
	\begin{align*}
		\cyb_{a,0,a,a}^{(m,l)}(i,0) - \delta_{0,0} = 2 - 1 = 1 = \tfrac{1}{2} \cyb_{a,0,a,a}^{(m,l)}(i,0).
	\end{align*}
	Hence, 
	\begin{align*}
		\underset{{0\le i,j\le n-1}}{\det} \big(\cyb_{a,0,a,a}^{(m,l)}(i,j) - \delta_{0,j}\big) = \tfrac{1}{2}\cdot \underset{{0\le i,j\le n-1}}{\det} \big(\cyb_{a,0,a,a}^{(m,l)}(i,j)\big),
	\end{align*}
	thereby giving the claimed result \eqref{eq:L-B-D}.
	
	Next, we prove \eqref{eq:L-B-H}. Let
	\begin{align*}
		\widehat{G}(u,v):=\frac{u^{-a}(l-1+u)}{1-u-(l-1)uv-u^2v}.
	\end{align*}
	This time we utilize the identity
	\begin{align*}
		&\frac{1-lv^2}{(1-v)^2(1-lv)^2}\widehat{G}\left(u,\frac{v}{(1-v)(1-lv)}\right)\\
		&\qquad = \frac{lu^{-a-1}}{1-lv-u} + \frac{u^{-a-1}}{(1-u)^{-1}(1-v+uv)} - \frac{u^{-a-1}(l+1-2lv)}{(1-v)(1-lv)},
	\end{align*}
	and invoke \eqref{eq:H-sr-gf} and \eqref{eq:B-gf} to derive that
	\begin{align*}
		&\frac{1-lv^2}{(1-v)^2(1-lv)^2} \hP_{1,l-1,1}^{(m-1,a)}\left(u,\frac{v}{(1-v)(1-lv)}\right)\\
		&\qquad = B_{a+1,1,a+1,a-1}^{(m,l)}(u,v) - \RP_{u} \frac{1}{m}\sum_{k=0}^{m-1} \frac{\big(e^{\frac{2\pi \ii k}{m}} u^{\frac{1}{m}}\big)^{-a-1}(l+1-2lv)}{(1-v)(1-lv)}.
	\end{align*}
	Since $a\ge 0$, it is clear that
	\begin{align*}
		\RP_{u} \frac{1}{m}\sum_{k=0}^{m-1} \frac{\big(e^{\frac{2\pi \ii k}{m}} u^{\frac{1}{m}}\big)^{-a-1}(l+1-2lv)}{(1-v)(1-lv)} = 0.
	\end{align*}
	Therefore,
	\begin{align*}
		\frac{1-lv^2}{(1-v)^2(1-lv)^2} \hP_{1,l-1,1}^{(m-1,a)}\left(u,\frac{v}{(1-v)(1-lv)}\right) = B_{a+1,1,a+1,a-1}^{(m,l)}(u,v).
	\end{align*}
	The claimed relation \eqref{eq:L-B-H} then follows by applying Lemma~\ref{le:right-action} and recalling \eqref{eq:det-H} and \eqref{eq:hdje-gf}.
\end{proof}

\section{Proof of two conjectural product formulas for KKS determinants}\label{sec:KKS-conj}

The \emph{apotheosis} of the present work showcases the proof of two conjectural determinants of Koutschan, Krattenthaler and Schlosser~\cite[p.~30, Conjecture~23, eqs.~(10.9) and (10.8)]{KKS2025}.

\begin{theorem}[$=$ Theorem \ref{th:det-conj}]
	For all $n\ge 1$,
	\begin{align}\label{eq:KKS-WH31}
		\underset{{0\le i,j\le n-1}}{\det} \left(2^{i+1}\binom{4j+i+2}{4j+2} + \binom{4j-i}{4j+2}\right) = \prod_{i=1}^{n} \frac{\Gamma(6i-1) \Gamma(\frac{i+2}{4})}{2(2i-1)\Gamma(5i-1) \Gamma(\frac{5i-2}{4})},
	\end{align}
	and
	\begin{align}\label{eq:KKS-WD33}
		\underset{{0\le i,j\le n-1}}{\det} \left(2^{i}\binom{4j+i+3}{4j+3} + \binom{4j-i+3}{4j+3}\right) = 2\prod_{i=1}^{n} \frac{\Gamma(6i-1) \Gamma(\frac{i+3}{4})}{\Gamma(5i) \Gamma(\frac{5i-1}{4})}.
	\end{align}
\end{theorem}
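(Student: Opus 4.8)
The two evaluations \eqref{eq:KKS-WH31} and \eqref{eq:KKS-WD33} are, via Theorems~\ref{th:main-d} and \ref{th:main-h}, precisely the two conjectural KKS determinants, so it suffices to establish the determinant identities directly. The plan is to deploy Zeilberger's \emph{holonomic Ansatz}. Writing a given determinant as $D_n=\det_{0\le i,j\le n-1}(a_{i,j})$ with conjectured value $b_n$ (the product on the right-hand side), it suffices to produce a \emph{certificate} $c_{n,j}$, defined for $n\ge 1$ and $0\le j\le n-1$, satisfying
\begin{align*}
	c_{n,n-1}=1,\qquad \sum_{j=0}^{n-1}c_{n,j}\,a_{i,j}=0 \quad (0\le i\le n-2),\qquad \sum_{j=0}^{n-1}c_{n,j}\,a_{n-1,j}=\frac{b_n}{b_{n-1}},
\end{align*}
together with the base case $D_1=b_1$. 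Indeed, the first two conditions force $c_{n,\bullet}$ to be the bottom row of the cofactor matrix of $M_n=(a_{i,j})$, normalized so that its $(n-1)$-entry equals $1$; granting $D_{n-1}\ne 0$ inductively, the third condition then reads $D_n/D_{n-1}=b_n/b_{n-1}$, and the evaluation follows by induction. The base cases are immediate: in both determinants $D_1=a_{0,0}=2$, matching $b_1=2$.

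First I would guess $c_{n,j}$. Solving the defining linear system for small $n$ and feeding the rational values into guessing routines, I expect to obtain a \emph{holonomic} description of $c_{n,j}$, namely a system of $P$-finite recurrences in $n$ and $j$ together with finitely many initial values (and, with luck, an explicit closed form). The normalization $c_{n,n-1}=1$ is imposed at this stage.

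The decisive step is the verification of the vanishing condition $\sum_{j}c_{n,j}a_{i,j}=0$ for $0\le i\le n-2$. Since each binomial summand $a_{i,j}$ is holonomic in $(i,j)$ and the guessed $c_{n,j}$ is holonomic in $(n,j)$, the product $c_{n,j}a_{i,j}$ is holonomic in $(n,i,j)$, and I would apply creative telescoping with respect to the summation variable $j$ to produce a telescoper $P(n,i;S_n,S_i)$ and a certificate $g(n,i,j)$ with $P\,(c_{n,j}a_{i,j})=g(n,i,j+1)-g(n,i,j)$. Summing over $0\le j\le n-1$ and handling the boundary contributions yields a recurrence for the bivariate sum $T(n,i):=\sum_{j}c_{n,j}a_{i,j}$; verifying that $T$ vanishes on a sufficient range of initial $(n,i)$ then forces $T\equiv 0$ throughout the triangular region, which is the middle condition. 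The normalization condition is a single summation in $n$ whose right-hand side $b_n/b_{n-1}$ is an explicit ratio of $\Gamma$-products, and is dispatched by creative telescoping in the same way.

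The main obstacle is purely computational rather than conceptual: the telescopers and certificates carry rational-function coefficients of very high degree with enormous integer numerators and denominators, so that working over $\mathbb{Q}$ is infeasible. To overcome this I would carry out the creative telescoping by \emph{modular reduction}, running the elimination modulo a battery of primes, reconstructing the telescoper's rational coefficients by Chinese remaindering together with rational reconstruction, and then verifying the reconstructed operator exactly. This modular approach is what renders both evaluations tractable, and the proof is completed once the base cases and the three certificate conditions have been checked for each of \eqref{eq:KKS-WH31} and \eqref{eq:KKS-WD33}.
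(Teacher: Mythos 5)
Your proposal is correct and follows essentially the same route as the paper: Zeilberger's holonomic Ansatz with guessed annihilators for the normalized cofactors $c_{n,j}$, creative telescoping to certify the normalization, vanishing, and quotient conditions, and modular reduction (interpolation modulo many primes plus rational reconstruction) to make the otherwise infeasible telescoping computations tractable. The one wrinkle you gloss over is that the paper, having only two telescopers rather than a full Gr\"obner basis for the vanishing condition, faces an infinite singularity set and must treat the cases $0\le i\le 4$ by a separate argument with nonvanishing inhomogeneous parts --- but this is a refinement inside the same method, not a different approach.
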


\subsection{Holonomic Ansatz}

As explained and utilized by Koutschan, Krattenthaler and Schlosser in \cite{KKS2025}, one of the most powerful techniques to evaluate determinants with holonomic entries is the principle of \emph{holonomic Ansatz} \cite{Zei2007}. (We refer the reader to \cite[pp.~4--5, Sect.~2]{KKS2025} for terminology and mathematical details.)

Generically, we are to evaluate the determinants $\det (A_{n})$ where the square matrices $A_n := (a_{i,j})_{0\le i,j< n}$ come from the leading principal submatrices of a fixed \emph{infinite} matrix $A:=(a_{i,j})_{i,j\ge 0}$ whose entries $a_{i,j}$ form holonomic sequences in the indices $i$ and $j$.

Now the takeaway of the holonomic Ansatz principle is that by defining the \emph{$(n-1,j)$-th normalized cofactors} of $A_n$:
\begin{align}\label{eq:c-n,j-def}
	c_{n,j} := (-1)^{n-1+j} \frac{M_{n-1,j}}{M_{n-1,n-1}} \qquad (0\le j\le n-1),
\end{align}
where $M_{i,j}$ is the \emph{$(i,j)$-th minor} of $A_n$, we \emph{often} have, although \emph{not universally} true, that the bivariate sequence $c_{n,j}$ is \emph{holonomic}. In practice, we can compute a dataset of $c_{n,j}$ from the infinite matrix $A$, and take advantage of Kauers' \emph{Mathematica} package \texttt{Guess} \cite{Kau2009} to predict a set of recurrences satisfied by $c_{n,j}$, which in turn, produces a \emph{(left) Gr\"obner basis} $\mathsf{annc}$ of the \emph{annihilators} for $c_{n,j}$.

It remains to show $c_{n,j}$ is indeed annihilated by $\mathsf{annc}$. We start by noting from \eqref{eq:c-n,j-def} that
\begin{align}\label{eq:H1}
	c_{n,n-1} = 1\qquad (n\ge 1),\tag{H1}
\end{align}
and from the Laplace expansion with respect to the $i$-th row of $A_n$ that
\begin{align}\label{eq:H2}
	\Sigma_{i,n}:=\sum_{j=0}^{n-1} a_{i,j} c_{n,j} = 0\qquad (0\le i < n-1).\tag{H2}
\end{align}
In the meantime, if a bivariate sequence $c'_{n,j}$ satisfies the relations \eqref{eq:H1} and \eqref{eq:H2}, then it is uniquely determined under the \emph{a priori} assumption that each $A_n$ has full rank, or equivalently, $\det (A_n) \ne 0$. (In the cases of \eqref{eq:KKS-WH31} and \eqref{eq:KKS-WD33}, since the two determinants count certain positively weighted nonintersecting lattice paths that always exist, their values are nonvanishing.) Hence, if such $c'_{n,j}$ is produced under the annihilation of the guessed $\mathsf{annc}$ together with the initial values from $c_{n,j}$, we may argue that $c_{n,j} =c'_{n,j}$ by the uniqueness so that $c_{n,j}$ is also annihilated by $\mathsf{annc}$.

To show $c'_{n,j}$ satisfies \eqref{eq:H1}, we only need to produce an annihilator for $c'_{n,n-1}$ from $\mathsf{annc}$ in light of the properties of \emph{holonomic closure} \cite[p.~130, Theorem 4]{Kau2013}. Then we examine that this annihilator admits a constant sequence as its solution so that checking a suitable number of initial values concludes the claim. This part can be quickly done.

To show $c'_{n,j}$ satisfies \eqref{eq:H2}, we apply the method of \emph{creative telescoping} \cite{Zei1991} to the summand $a_{i,j}c'_{n,j}$ to get an annihilating basis for the summation
\begin{align*}
	\Sigma'_{i,n}:= \sum_{j=0}^{n-1} a_{i,j} c'_{n,j}.
\end{align*}
Then we compute the singularities of this basis and determine for which \emph{finite} set of initial values we have to check separately.

Once we have proven the annihilators $\mathsf{annc}$ for $c_{n,j}$, we apply creative telescoping to get a recurrence for the univariate sequence
\begin{align*}
	\Sigma_{n-1,n} := \sum_{j=0}^{n-1} a_{n-1,j} c_{n,j}\qquad (n\ge 1).
\end{align*}
On the other hand, by the Laplace expansion with respect to the last row of $A_n$, it is true that
\begin{align}\label{eq:H3}
	\sum_{j=0}^{n-1} a_{n-1,j} c_{n,j} = \frac{\det (A_n)}{\det (A_{n-1})}\qquad (n\ge 1).\tag{H3}
\end{align}
Hence, as long as we can formulate a potential expression for each $\det (A_n)$, it suffices to verify that the right-hand side of \eqref{eq:H3} satisfies the same recurrence for $\Sigma_{n-1,n}$ while a suitable number of initial values also match. This potential expression therefore becomes valid.

The principle of holonomic Ansatz works perfectly for most determinants evaluated by Koutschan, Krattenthaler and Schlosser; see \cite[Theorems~3, 4, 10, 12, 14 and 15]{KKS2025}. Taking one of the most complicated cases among those, namely, \cite[p.~20, eqs.~(7.11)+(7.8)]{KKS2025}:
\begin{align}\label{eq:KKS-ex}
	\underset{{0\le i,j\le n-1}}{\det} \left(3^{i+4}\binom{3j+i+8}{3j+8} + \binom{3j-i}{3j+8}\right) = \prod_{i=1}^{n} \frac{2^{i-1}\Gamma(4i+7) \Gamma(\frac{i+1}{3})}{\Gamma(3i+6) \Gamma(\frac{4i+4}{4})},
\end{align}
as an example, the entries in the square matrices are first split as
\begin{align*}
	a_{i,j} := a_{i,j}^{(1)} + a_{i,j}^{(2)},
\end{align*}
where
\begin{align*}
	a_{i,j}^{(1)} := 3^{i+4}\binom{3j+i+8}{3j+8},\qquad a_{i,j}^{(2)} := \binom{3j-i}{3j+8}.
\end{align*}
We still use $c_{n,j}$ to denote the normalized cofactors of $A_n$, which are annihilated by a certain guessed Gr\"obner basis $\mathsf{annc}$. (Below we shall not distinguish $c_{n,j}$ and $c'_{n,j}$ by abuse of notation.) As we have discussed, the relation \eqref{eq:H1} is always easy to establish. Hence, we focus on \eqref{eq:H2} and \eqref{eq:H3}. Let
\begin{align*}
	\Sigma_{i,n}^{(1)} := \sum_{j=0}^{n-1} a_{i,j}^{(1)} c_{n,j},\qquad \Sigma_{i,n}^{(2)} := \sum_{j=0}^{n-1} a_{i,j}^{(2)} c_{n,j}.
\end{align*}

For \eqref{eq:H2}, we apply creative telescoping to deduce annihilating bases for $\Sigma_{i,n}^{(1)}$ and $\Sigma_{i,n}^{(2)}$, respectively. Then the holonomic closure properties allow us to get an annihilating basis for the desired $\Sigma_{i,n}$. In particular, such a procedure can be executed with recourse to Koutschan's \textit{Mathematica} package \texttt{HolonomicFunctions} \cite{Kou2010b} due to the algorithm in \cite{Kou2010a}. For example, the five annihilators in the Gr\"obner basis for $\Sigma_{i,n}$ are supported on
\begin{align*}
	& \{S_i^4, S_i^3, S_i^2S_n, S_iS_n^2, S_n^3, S_i^2, S_iS_n, S_n^2, S_i,S_n, 1\},\\
	& \{S_i^3S_n, S_i^3, S_i^2S_n, S_iS_n^2, S_n^3, S_i^2, S_iS_n, S_n^2, S_i,S_n, 1\},\\
	& \{S_i^2S_n^2, S_i^3, S_i^2S_n, S_iS_n^2, S_n^3, S_i^2, S_iS_n, S_n^2, S_i,S_n, 1\},\\
	& \{S_iS_n^3, S_i^3, S_i^2S_n, S_iS_n^2, S_n^3, S_i^2, S_iS_n, S_n^2, S_i,S_n, 1\},\\
	& \{S_n^4, S_i^3, S_i^2S_n, S_iS_n^2, S_n^3, S_i^2, S_iS_n, S_n^2, S_i,S_n, 1\},
\end{align*}
respectively, where $S$ represents the \emph{forward shift operator} defined by $S_l \circ f(l) := f(l+1)$ for functions $f$ in the variable $l$. These annihilators occupy $819\,944$ \textit{Mathematica} bytes in total.

For \eqref{eq:H3}, we also use the \texttt{HolonomicFunctions} package to derive an annihilator for $\Sigma_{n-1,n}$ from the annihilators for $\Sigma_{n-1,n}^{(1)}$ and $\Sigma_{n-1,n}^{(2)}$. This annihilator is supported on
\begin{align*}
	\{S_n^{10}, S_n^{9}, S_n^{8}, S_n^{7}, S_n^{6}, S_n^{5}, S_n^{4}, S_n^{3}, S_n^{2}, S_n^{1}, 1\},
\end{align*}
and occupies $200\,760$ \textit{Mathematica} bytes.

\subsection{Modular reduction}

In \emph{theory}, the same method should also apply to the determinants \eqref{eq:KKS-WH31} and \eqref{eq:KKS-WD33}. As before, the relatively easy relation \eqref{eq:H1} will not cause a problem. However, the scale of the annihilators for \eqref{eq:H2} and \eqref{eq:H3} in the two cases, as will be seen in Table~\ref{tab:ct}, is much larger than that for the previous example \eqref{eq:KKS-ex}. Therefore, in \emph{practice}, we are not able to compute these annihilators \emph{directly} by the \textsf{FindCreativeTelescoping} command offered in the \texttt{HolonomicFunctions} package within a reasonable time frame.

Fortunately, such an issue on \eqref{eq:H2} and \eqref{eq:H3} can be resolved by the trick of \emph{modular reduction} \cite[Sect.~3.4]{Kou2009}. In what follows, we present the proof of \eqref{eq:KKS-WH31} as an illustration, while the determinant evaluation \eqref{eq:KKS-WD33} can be shown analogously. All necessary computational details, including those for the relation \eqref{eq:H1}, can be found in the supplementary material \cite{CCY2025}.

While keeping other notation as before, from now on we put
\begin{align*}
	a_{i,j}^{(1)} := 2^{i+1}\binom{4j+i+2}{4j+2},\qquad a_{i,j}^{(2)} := \binom{4j-i}{4j+2}.
\end{align*}

\subsubsection{Relation \eqref{eq:H3}}

The most crucial task here is to derive annihilators for
\begin{align*}
	\Sigma_{n-1,n}^{(1)} = \sum_{j=0}^{n-1} a_{n-1,j}^{(1)} c_{n,j},\qquad \Sigma_{n-1,n}^{(2)} = \sum_{j=0}^{n-1} a_{n-1,j}^{(2)} c_{n,j},
\end{align*}
respectively, while we have already guessed the annihilators for $c_{n,j}$. Therefore, we are about to apply creative telescoping to the summands $a_{n-1,j}^{(1)} c_{n,j}$ and $a_{n-1,j}^{(2)} c_{n,j}$. In what follows, we focus on the former, and the latter can be worked out in the same way.

To apply the method of creative telescoping for $\Sigma_{n-1,n}^{(1)}$, a classical way due to Chyzak \cite{Chy2000} is to first construct an annihilating operator for the summand $a_{n-1,j}^{(1)} c_{n,j}$, which takes the form \cite[p.~260, eq.~(2.1)]{Kou2010a}:
\begin{align}\label{eq:creative-telescoping-short}
	\boldsymbol{P} + (S_j-1)\boldsymbol{Q} := \sum P_{\alpha_n}(n)S_n^{\alpha_n} + (S_j-1) \sum Q_{\beta_n,\beta_j}(n;j) S_{n}^{\beta_n}S_{j}^{\beta_j},
\end{align}
where all $P$ are rational functions in $\mathbb{Q}(n)$ and all $Q$ are rational functions in $\mathbb{Q}(n,j)$. The first part $\boldsymbol{P}$, namely, the summation involving the $P$-terms, is called the \emph{telescoper part}, and the remaining part $(S_j-1)\boldsymbol{Q}$ is called the \emph{delta part} of this annihilating operator. In \cite{Kou2010a}, Koutschan offered a faster approach by further making an Ansatz on the delta part and specifying \eqref{eq:creative-telescoping-short} as \cite[p.~261, eq.~(2.3)]{Kou2010a}:
\begin{align}\label{eq:creative-telescoping}
	\sum P_{\alpha_n}(n)S_n^{\alpha_n} + (S_j-1) \sum \frac{\tilde{Q}_{\beta_n,\mu,\nu}(n)\,j^\mu}{D_{\beta_n,\nu}(n;j)} S_{n}^{\beta_n}S_{j}^{\nu}.
\end{align}
Now all $\tilde{Q}$ are rational functions in $\mathbb{Q}(n)$ and all $D$ are polynomials in $\mathbb{Q}(n)[j]$. In particular, these denominator polynomials $D$ are highly predictable. The \textsf{FindCreativeTelescoping} command in the \texttt{HolonomicFunctions} package \cite{Kou2010b} provides an \emph{automatic} way to compute this annihilating operator \eqref{eq:creative-telescoping}. Finally, with \eqref{eq:creative-telescoping-short} or \eqref{eq:creative-telescoping} constructed, we have \cite[p.~14]{Kou2010b}:
\begin{align}\label{eq:inhomogeneous}
	\boldsymbol{P}\circ \sum_{j=0}^{n-1} a_{n-1,j}^{(1)} c_{n,j} = \left[\boldsymbol{Q}\circ a_{n-1,j}^{(1)} c_{n,j}\right]_{j=0} - \left[\boldsymbol{Q}\circ a_{n-1,j}^{(1)} c_{n,j}\right]_{j=n}.
\end{align}
The right-hand side of the above is called the \emph{inhomogeneous part}, and it evaluates to zero here (and at many other places especially for summations with \emph{natural} bounds). So we may conclude that the telescoper part annihilates $\Sigma_{n-1,n}^{(1)}$.

Since an automatic computation of \eqref{eq:creative-telescoping} becomes unrealistic in our case, we make use of the method of \emph{modular reduction}, which requires a considerable amount of \emph{manual} input. The basic idea is that we first make an \emph{Ansatz} on what form the annihilating operator \eqref{eq:creative-telescoping}, especially its telescoper part, could be like. By certain numerical experiments, we assume \emph{a priori} that the support of the telescoper is
\begin{align}\label{eq:Supp-H3}
	\Supp_{\rH_3} := \{S_n^{20}, S_n^{19}, \ldots, S_n^2, S_n, 1\}.
\end{align}
The size of this support is much bigger in comparison with that for \eqref{eq:KKS-ex}. This is the first reason why \eqref{eq:creative-telescoping} cannot be calculated automatically, especially by noting that such an automatic computation for \eqref{eq:KKS-ex} already takes up to \emph{half a day} according to \cite[p.~18, Table~1]{KKS2025}.

Once the support of the telescoper has been assumed, we use the ``\textit{FindSupport}'' mode of Koutschan's \textsf{FindCreativeTelescoping} command to construct the delta part. It is notable that the polynomials $D$ will be predicted automatically, and hence the unknown \emph{parameters} to be determined are the rational functions $P$ and $\tilde{Q}$. In particular, there are $21$ (which is exactly the size of the support $\Supp_{\rH_3}$) parameters in the telescoper part, and $691$ parameters in the delta part (after a full expansion).

Our next task is to formulate these unknown parameters.

Taking an arbitrary parameter $P(n)$ in the telescoper part as an example, we first keep in mind that it is a rational function in $\mathbb{Q}(n)$. Now we choose a set of $n$-values, say $\{n_1,\ldots,n_L\}$, and a set of \emph{large} primes, say $\{p_1,\ldots,p_M\}$. The ``\textit{Modular}'' mode of Koutschan's \textsf{FindCreativeTelescoping} command produces $L\times M$ interpolated values
\begin{align*}
	P(n_l) \bmod p_m.
\end{align*}
By the classical approach of \emph{rational reconstruction} \cite[Sect.~5.7]{vzGG2013}, we may recover this rational function $P(n)$ from the interpolations. In practice, we choose $L=1296$ different $n$-values and $M=300$ different large primes of size around $2^{32}$. The latter is especially necessary for rational reconstruction because the largest coefficient among the whole telescoper part has $977$ digits! For the size of required interpolations for other quantities in our proof of \eqref{eq:KKS-WH31} and \eqref{eq:KKS-WD33}, we refer the reader to Table~\ref{tab:interpolation}.

\begin{table}[ht!]
	\def\arraystretch{1.5}
	\centering
	\caption{Size of the required interpolations}\label{tab:interpolation}
	\begin{tabular}{p{0.15\textwidth}|>{\centering\arraybackslash}p{0.11\textwidth}|>{\centering\arraybackslash}p{0.11\textwidth}|>{\centering\arraybackslash}p{0.11\textwidth}|>{\centering\arraybackslash}p{0.11\textwidth}|>{\centering\arraybackslash}p{0.11\textwidth}|>{\centering\arraybackslash}p{0.11\textwidth}}
		\hline
		\cellcolor{gray!20}Det.~\eqref{eq:KKS-WH31} & \multicolumn{2}{c|}{\eqref{eq:H2}, \#1} & \multicolumn{2}{c|}{\eqref{eq:H2}, \#2} & \multicolumn{2}{c}{\eqref{eq:H3}}\\
		\hline
		& Smnd 1 & Smnd 2 & Smnd 1 & Smnd 2 & Smnd 1 & Smnd 2\\
		\hline
		\# of $i$ & $160$ & $160$ & $160$ & $160$ & --- & ---\\
		\# of $n$ & $528$ & $528$ & $528$ & $528$ & $1296$ & $1296$\\
		\# of primes & $40$ & $40$ & $40$ & $40$ & $300$ & $300$\\
		\hline
		\multicolumn{7}{l}{}\\
		\hline
		\cellcolor{gray!20}Det.~\eqref{eq:KKS-WD33} & \multicolumn{2}{c|}{\eqref{eq:H2}, \#1} & \multicolumn{2}{c|}{\eqref{eq:H2}, \#2} & \multicolumn{2}{c}{\eqref{eq:H3}}\\
		\hline
		& Smnd 1 & Smnd 2 & Smnd 1 & Smnd 2 & Smnd 1 & Smnd 2\\
		\hline
		\# of $i$ & $200$ & $200$ & $200$ & $200$ & --- & ---\\
		\# of $n$ & $528$ & $528$ & $528$ & $528$ & $1296$ & $1296$\\
		\# of primes & $40$ & $40$ & $40$ & $40$ & $300$ & $300$\\
		\hline
	\end{tabular}
\end{table}

After working out the rational reconstruction for all unknown parameters in the telescoper and delta parts, we arrive at a yet postulated annihilating operator \eqref{eq:creative-telescoping} for $a_{n-1,j}^{(1)} c_{n,j}$, and this can be finally certified by the \textsf{OreReduce} command of the \texttt{HolonomicFunctions} package after inputting the information of $a_{n-1,j}^{(1)} c_{n,j}$.

The above computation for modular reduction was executed in \emph{parallel} on an Amazon cluster (AWS EC2, instance type: m5.24xlarge\footnote{See \url{https://instances.vantage.sh/aws/ec2/m5.24xlarge} for details.}, 96 CPUs, 384 GB Memory\footnote{We report that a Memory overflow problem was always encountered due to the scale of our computation at our initial attempts so eventually we had to launch only 48 kernels for interpolations and 72 kernels for rational reconstruction.}). In particular, it took around \emph{1.5 Days} to generate interpolating points and another \emph{2.5 Days} for the rational reconstruction of all $21+691=712$ parameters. The outputting telescoper part occupies $6\,102\,528$ \textit{Mathematica} bytes and the delta part occupies $3\,182\,669\,576$ bytes. The scale of them, which is extraordinarily huge, gives the second (and the most critical) reason why an automatic application of Koutschan's \textsf{FindCreativeTelescoping} command is impractical. For the data of other calculations, we refer the reader to Table~\ref{tab:ct}.

\begin{table}[ht!]
	\def\arraystretch{1.5}
	\centering
	\caption{Data for the modular reduction of creative telescoping}\label{tab:ct}
	\begin{tabular}{p{0.15\textwidth}|>{\centering\arraybackslash}p{0.11\textwidth}|>{\centering\arraybackslash}p{0.11\textwidth}|>{\centering\arraybackslash}p{0.11\textwidth}|>{\centering\arraybackslash}p{0.11\textwidth}|>{\centering\arraybackslash}p{0.11\textwidth}|>{\centering\arraybackslash}p{0.11\textwidth}}
		\hline
		\cellcolor{gray!20}Det.~\eqref{eq:KKS-WH31} & \multicolumn{2}{c|}{\eqref{eq:H2}, \#1} & \multicolumn{2}{c|}{\eqref{eq:H2}, \#2} & \multicolumn{2}{c}{\eqref{eq:H3}}\\
		\hline
		& Smnd 1 & Smnd 2 & Smnd 1 & Smnd 2 & Smnd 1 & Smnd 2\\
		\hline
		\multicolumn{7}{l}{\textit{Telescoper part}}\\
		\hline
		Support & \multicolumn{2}{c|}{$\Supp_{\rH_2}^\dagger$ in \eqref{eq:Supp-H2-1}} & \multicolumn{2}{c|}{$\Supp_{\rH_2}^\ddagger$ in \eqref{eq:Supp-H2-2}} & \multicolumn{2}{c}{$\Supp_{\rH_3}$ in \eqref{eq:Supp-H3}}\\
		\hline
		\# of paras. & $21$ & $21$ & $21$ & $21$ & $21$ & $21$\\
		ByteCount & {\footnotesize $27\,393\,808$} & {\footnotesize $27\,393\,808$} & {\footnotesize $21\,764\,496$} & {\footnotesize $20\,869\,424$} & {\footnotesize $6\,102\,528$} & {\footnotesize $5\,638\,048$}\\
		\hline
		\multicolumn{7}{l}{\textit{Delta part}}\\
		\hline
		\# of paras. & $148$ & $166$ & $123$ & $137$ & $691$ & $595$\\
		ByteCount & {\tiny $1\,777\,395\,424$} & {\tiny $2\,657\,465\,696$} & {\tiny $1\,033\,241\,824$} & {\tiny $1\,558\,607\,768$} & {\tiny $3\,182\,669\,576$} & {\tiny $4\,665\,533\,376$}\\
		\hline
		\multicolumn{7}{l}{}\\
		\hline
		\cellcolor{gray!20}Det.~\eqref{eq:KKS-WD33} & \multicolumn{2}{c|}{\eqref{eq:H2}, \#1} & \multicolumn{2}{c|}{\eqref{eq:H2}, \#2} & \multicolumn{2}{c}{\eqref{eq:H3}}\\
		\hline
		& Smnd 1 & Smnd 2 & Smnd 1 & Smnd 2 & Smnd 1 & Smnd 2\\
		\hline
		\multicolumn{7}{l}{\textit{Telescoper part}}\\
		\hline
		Support & \multicolumn{2}{c|}{$\Supp_{\rH_2}^\dagger$ in \eqref{eq:Supp-H2-1}} & \multicolumn{2}{c|}{$\Supp_{\rH_2}^\ddagger$ in \eqref{eq:Supp-H2-2}} & \multicolumn{2}{c}{$\Supp_{\rH_3}$ in \eqref{eq:Supp-H3}}\\
		\hline
		\# of paras. & $21$ & $21$ & $21$ & $21$ & $21$ & $21$\\
		ByteCount & {\footnotesize $39\,571\,320$} & {\footnotesize $36\,878\,968$} & {\footnotesize $31\,668\,024$} & {\footnotesize $28\,931\,264$} & {\footnotesize $6\,530\,808$} & {\footnotesize $6\,029\,816$}\\
		\hline
		\multicolumn{7}{l}{\textit{Delta part}}\\
		\hline
		\# of paras. & $173$ & $191$ & $148$ & $162$ & $716$ & $620$\\
		ByteCount & {\tiny $3\,876\,381\,000$} & {\tiny $5\,048\,293\,960$} & {\tiny $2\,413\,087\,160$} & {\tiny $3\,177\,278\,424$} & {\tiny $3\,827\,584\,248$} & {\tiny $5\,432\,429\,512$}\\
		\hline
	\end{tabular}
\end{table}

Recall that the telescoper part in \eqref{eq:creative-telescoping} indeed gives an annihilator for $\Sigma_{n-1,n}^{(1)}$ as we can easily check that the inhomogeneous part vanishes. By the same application of modular reduction for creative telescoping, we also obtain an annihilator for $\Sigma_{n-1,n}^{(2)}$. More importantly, the two annihilators differ by a fixed scalar. Hence, either of them annihilates $\Sigma_{n-1,n}$, as desired.

Finally, we check that, after substituting the product formula in \eqref{eq:KKS-WH31}, the right-hand side of \eqref{eq:H3} is also annihilated by the annihilator for $\Sigma_{n-1,n}$. Meanwhile, this annihilator has no singularity when $n>20$. Considering the support $\Supp_{\rH_3}$, which has leading exponent $20$, it would be sufficient to verify the initial values at $1\le n\le 20$ for
\begin{align*}
	\sum_{j=0}^{n-1} a_{n-1,j} c_{n,j} = \frac{\det (A_n)}{\det (A_{n-1})} = \frac{\Gamma(6n-1) \Gamma(\frac{n+2}{4})}{2(2n-1)\Gamma(5n-1) \Gamma(\frac{5n-2}{4})}
\end{align*}
in order to prove \eqref{eq:H3}, and hence \eqref{eq:KKS-WH31}, for all $n\ge 1$.

\subsubsection{Relation \eqref{eq:H2}}

This time we are supposed to construct annihilators for the bivariate sequences
\begin{align*}
	\Sigma_{i,n}^{(1)} = \sum_{j=0}^{n-1} a_{i,j}^{(1)} c_{n,j},\qquad \Sigma_{i,n}^{(2)} = \sum_{j=0}^{n-1} a_{i,j}^{(2)} c_{n,j},
\end{align*}
where $0\le i< n-1$. As before, we apply creative telescoping under modular reduction for the two summands $a_{i,j}^{(1)} c_{n,j}$ and $a_{i,j}^{(2)} c_{n,j}$, respectively, and we will still use the former as an instance.

The to-be-determined annihilators for $a_{i,j}^{(1)} c_{n,j}$ have the form
\begin{align}\label{eq:creative-telescoping-H2}
	\sum P_{\alpha_i,\alpha_n}(i,n)S_i^{\alpha_i}S_n^{\alpha_n} + (S_j-1) \sum \frac{\tilde{Q}_{\beta_i,\beta_n,\mu,\nu}(i,n)\,j^\mu}{D_{\beta_i,\beta_n,\nu}(i,n;j)} S_{i}^{\beta_i}S_{n}^{\beta_n}S_{j}^{\nu},
\end{align}
where all $P$ and $\tilde{Q}$ are rational functions in $\mathbb{Q}(i,n)$ and all $D$ are polynomials in $\mathbb{Q}(i,n)[j]$.

We make the \emph{first Ansatz} that the telescoper part is supported on
\begin{align}\label{eq:Supp-H2-1}
	\Supp_{\rH_2}^\dagger := \{&S_i^5, S_i^4S_n, S_i^3S_n^2, S_i^2S_n^3, S_iS_n^4, S_n^5, S_i^4, S_i^3S_n, S_i^2S_n^2,\notag\\
	& S_iS_n^3, S_n^4, S_i^3, S_i^2S_n, S_iS_n^2, S_n^3, S_i^2, S_iS_n, S_n^2, S_i,S_n, 1\}.
\end{align}
Then an application of the ``\textit{FindSupport}'' mode of the \textsf{FindCreativeTelescoping} command decides the shape of the delta part and the associated denominator polynomials $D$, and we are left to determine the $21$ (which is exactly the size of the support $\Supp_{\rH_2}^\dagger$) unknown parameters in the telescoper part, and $148$ parameters in the delta part. Note that these parameters are rational functions in $i$ and $n$. Still using a certain $P(i,n)$ in the telescoper part as an example, we are about to choose a set of $i$-values, say $\{i_1,\ldots,i_K\}$, a set of $n$-values, say $\{n_1,\ldots,n_L\}$, and a set of \emph{large} primes, say $\{p_1,\ldots,p_M\}$, and then utilize the ``\textit{Modular}'' mode of Koutschan's \textsf{FindCreativeTelescoping} command to produce $K\times L\times M$ interpolated values
\begin{align*}
	P(i_k,n_l) \bmod p_m.
\end{align*}
In practice, we choose $K=160$ different $i$-values, $L=528$ different $n$-values, and $M=40$ different large primes of size around $2^{32}$. Finally, the rational function $P(i,n)$ can be reconstructed by these interpolations. As soon as we obtain all unknown parameters in the telescoper and delta parts, we arrive at a conjectural annihilating operator \eqref{eq:creative-telescoping-H2} for $a_{i,j}^{(1)} c_{n,j}$, which can be certified by the \textsf{OreReduce} command of the \texttt{HolonomicFunctions} package. The whole computation was executed \emph{parallelly} on our Amazon cluster, and it took around \emph{1.5 Days} to generate interpolating points and another \emph{1 Day} for the rational reconstruction of all $21+148=169$ parameters. The outputting telescoper part occupies $27\,393\,808$ \textit{Mathematica} bytes and the delta part occupies $1\,777\,395\,424$ bytes. We remark that by the theory of creative telescoping, the telescoper part annihilates $\Sigma_{i,n}^{(1)}$. In the same way, we may also obtain an annihilator for $\Sigma_{i,n}^{(2)}$ supported on $\Supp_{\rH_2}^\dagger$. It is easy to check that they differ by a constant scalar, and hence either of them annihilates $\Sigma_{i,n}$. We denote such an annihilator for $\Sigma_{i,n}$ by $\mathsf{annH2a}$.

We execute the same procedure by making the \emph{second Ansatz} that the telescoper part is supported on
\begin{align}\label{eq:Supp-H2-2}
	\Supp_{\rH_2}^\ddagger := \{&S_i^6, S_i^5, S_i^4S_n, S_i^3S_n^2, S_i^2S_n^3, S_iS_n^4, S_i^4, S_i^3S_n, S_i^2S_n^2,\notag\\
	& S_iS_n^3, S_n^4, S_i^3, S_i^2S_n, S_iS_n^2, S_n^3, S_i^2, S_iS_n, S_n^2, S_i,S_n, 1\}.
\end{align}
By modular reduction, we also have an annihilator $\mathsf{annH2b}$ for $\Sigma_{i,n}$ supported on $\Supp_{\rH_2}^\ddagger$.

Now we should point out that when Koutschan, Krattenthaler and Schlosser worked on \eqref{eq:KKS-ex}, a \emph{full} annihilating basis for $\Sigma_{i,n}$ was computed. The merit is that the set of singularities of these annihilators is \emph{finite}. However, doing so would be extremely time-consuming in our case.

If instead only the two annihilators $\mathsf{annH2a}$ and $\mathsf{annH2b}$ are known, we have to give up the benefit of a finite check for \eqref{eq:H2} and work on an \emph{infinite} set of singularities. To begin with, we define the region in our case by
\begin{align*}
	\RR := \big\{(i,n)\in \mathbb{N}^2: 0\le i < n-1\big\}.
\end{align*}
Taking the supports $\Supp_{\rH_2}^\dagger$ and $\Supp_{\rH_2}^\ddagger$ into consideration, we may translate the annihilators $\mathsf{annH2a}$ and $\mathsf{annH2b}$ into recurrences for $\Sigma_{i,n}$ restricted on $\RR$:
\begin{align}
	\gamma_{5,0}^\dagger(i,n)\cdot \Sigma_{i,n} &= \sum_{(a_1,b_1)\in \Ind_{\rH_2}^{\dagger}} \gamma_{a_1,b_1}^\dagger(i,n)\cdot \Sigma_{i-5+a_1,n+b_1},\\
	\gamma_{6,0}^\ddagger(i,n)\cdot \Sigma_{i,n} &= \sum_{(a_2,b_2)\in \Ind_{\rH_2}^{\ddagger}} \gamma_{a_2,b_2}^\ddagger(i,n)\cdot \Sigma_{i-6+a_2,n+b_2},
\end{align}
where $\gamma^\dagger(i,n)$ and $\gamma^\ddagger(i,n)$ are polynomials in $i$ and $n$, and the index sets are given by
\begin{align*}
	\Ind_{\rH_2}^{\dagger} &:= \big\{(a,b):S_i^aS_n^b\in \Supp_{\rH_2}^\dagger\backslash\{S_i^5\}\big\},\\
	\Ind_{\rH_2}^{\ddagger} &:= \big\{(a,b):S_i^aS_n^b\in \Supp_{\rH_2}^\ddagger\backslash\{S_i^6\}\big\}.
\end{align*}
We then verify that at least one of $\gamma_{5,0}^\dagger(i,n)$ and $\gamma_{6,0}^\ddagger(i,n)$ is nonvanishing at each point in the subregion $\big\{(i,n)\in \RR: i\ge 6\big\}$, and $\gamma_{5,0}^\dagger(i,n)$ is further nonvanishing in the subregion $\big\{(i,n)\in \RR: i= 5\big\}$. Furthermore, whenever $(i,n)\in \RR$ is such that $i\ge 5$, we always have $(i-5+a_1,n+b_1)\in \RR$ for all $(a_1,b_1)\in \Ind_{\rH_2}^{\dagger}$ while $i-5+a_1<i$; similarly, whenever $(i,n)\in \RR$ is such that $i\ge 6$, we always have $(i-6+a_2,n+b_2)\in \RR$ for all $(a_2,b_2)\in \Ind_{\rH_2}^{\ddagger}$ while $i-6+a_2<i$.

Hence, to show \eqref{eq:H2},
\begin{align*}
	\Sigma_{i,n}=\sum_{j=0}^{n-1} a_{i,j} c_{n,j} = 0\qquad (0\le i < n-1),
\end{align*}
it suffices to show $\Sigma_{i,n} = 0$ for all points in $\RR$ with $0\le i\le 4$. This verification, although still on an \emph{infinite} set of points, becomes feasible because we can work separately on $\Sigma_{0,n},\ldots,\Sigma_{4,n}$ with each treated as a univariate sequence. In theory, a direct application of creative telescoping by Koutschan's \textsf{FindCreativeTelescoping} command allows us to finalize the computation. However, we unfortunately notice that for the five cases of $i$, the inhomogeneous parts might be \emph{nonvanishing}. This situation makes our verification somehow trickier.

From now on, let us fix $i_0$ with $0\le i_0\le 4$. An application of creative telescoping reveals that the telescoper part equals $1$. Then the desired annihilator for $\Sigma_{i_0,n}^{(1)}$ is formulated as
\begin{align*}
	1+ (S_k-1)\boldsymbol{Q},
\end{align*}
where $\boldsymbol{Q}$ is in the operators $S_n$ and $S_j$. According to \eqref{eq:inhomogeneous}, we can express $\Sigma_{i_0,n}^{(1)}$ as
\begin{align*}
	\Sigma_{i_0,n}^{(1)} = \left[\boldsymbol{Q}\circ a_{i_0,j}^{(1)} c_{n,j}\right]_{j=0} - \left[\boldsymbol{Q}\circ a_{i_0,j}^{(1)} c_{n,j}\right]_{j=n}.
\end{align*}
In particular, we find that $\boldsymbol{Q}$ is supported on
\begin{align*}
	\{S_nS_j, S_j^2, S_n, S_j, 1\}.
\end{align*}
Using the facts that $c_{n,n-1}=1$ from \eqref{eq:H1} and that $c_{n,j}=0$ whenever $j\ge n$ by definition, we find that the inhomogeneous part vanishes at $j=n$. For the inhomogeneous part at $j=0$, we split it according to the above support of $\boldsymbol{Q}$ (while noting that $a_{i_0,j}^{(1)}$ is a polynomial in $j$):
\begin{align*}
	\left[\boldsymbol{Q}\circ a_{i_0,j}^{(1)} c_{n,j}\right]_{j=0} = \sum_{b=0}^2 O_b \circ c_{n,b},
\end{align*}
where each $O_b$ with $0\le b\le 2$ is a certain Ore polynomial in the operator $S_n$. Hence,
\begin{align}
	\Sigma_{i_0,n}^{(1)} = \sum_{b=0}^2 O_b \circ c_{n,b}.
\end{align}

Let $\mathsf{annc}_b$ denote the annihilating ideal for the \emph{univariate} sequence $c_{n,b}$ for each $b$ with $0\le b\le 2$. They can be computed from the annihilating ideal $\mathsf{annc}$ for the \emph{bivariate} sequence $c_{n,j}$ in light of the holonomic closure properties.

Now our goal is to show that $\Sigma_{i_0,n}^{(1)}$ is annihilated by $\mathsf{annc}_0$.

To see this, we apply $\mathsf{annc}_0$ to $\Sigma_{i_0,n}^{(1)}$ and show that the resulting sequence evaluates to zero for every $n\ge i_0+2$. Again, by the holonomic closure properties, we have
\begin{align*}
	\mathsf{annc}_0\circ \Sigma_{i_0,n}^{(1)} = \sum_{b=0}^2 \tilde{O}_b \circ c_{n,b},
\end{align*}
for certain new Ore polynomials $\tilde{O}_b$. We may then compute an annihilating ideal for each $\tilde{O}_b \circ c_{n,b}$ from $\mathsf{annc}_b$, and hence an annihilating ideal for their linear combination. In particular, this ideal annihilates $\mathsf{annc}_0\circ \Sigma_{i_0,n}^{(1)}$. Now we only need to verify $\mathsf{annc}_0\circ \Sigma_{i_0,n}^{(1)} = 0$ for a suitable number of initial values after working out the singularities. Then it is safe to conclude that $\mathsf{annc}_0\circ \Sigma_{i_0,n}^{(1)} = 0$ for all $n\ge i_0+2$.

Finally, we notice that $a_{i_0,j}^{(2)} = \binom{4j-i_0}{4j+2}$ equals $\binom{-i_0}{2}$ when $j=0$ and $0$ whenever $j\ge 1$. Hence,
\begin{align*}
	\Sigma_{i_0,n}^{(2)} = \sum_{j=0}^{n-1} a_{i_0,j}^{(2)} c_{n,j} = \binom{-i_0}{2} c_{n,0}.
\end{align*}
It follows that $\Sigma_{i_0,n}^{(2)}$ is also annihilated by $\mathsf{annc}_0$.

As a consequence, $\mathsf{annc}_0$ annihilates $\Sigma_{i_0,n}=\Sigma_{i_0,n}^{(1)}+\Sigma_{i_0,n}^{(2)}$. After working out the singularities, we check $\Sigma_{i_0,n}=0$ for a certain number of initial values to conclude the desired vanishing of $\Sigma_{i_0,n}$ for all $n\ge i_0+2$.

\section{Epilogue}

Before bringing down the \emph{curtain}, we revisit the question of Koutschan, Krattenthaler and Schlosser, and think about other connections between KKS determinants and our weighted enumerations of domino tilings, $\cyD_{w_1,w_2,w_3}^{(s,r)}(n)$, or equivalently, nonintersecting lattice paths, $\cyL_{w_1,w_2,w_3}^{(s,r)}(n)$. Based on a brief computer search for relations concerning $\cyL_{w_1,w_2,w_3}^{(s,r)}(n)$ and $\cyB_{a,b,c,d}^{(m,l)}(n)$, we find that apart from \eqref{eq:main-d}, it seems necessary to require one of $w_2$ and $w_3$ to be zero when we set $s$ and $r$ \emph{generic}. Note that the case where $w_1=0$ is not at all interesting because there always exists an east step in the Delannoy path from $(-n,n)$ to $(r-n,n)$ for $r\ge 1$ so that the weighted contribution from the associated system of nonintersecting Delannoy paths becomes zero. In view of Remark~\ref{rmk:c1c2-scalar}, for the moment we only look at $\cyL_{1,1,0}^{(s,r)}(n)$ and $\cyL_{1,0,1}^{(s,r)}(n)$.

\begin{theorem}
	For $s\ge 1$ and $r\ge 0$,
	\begin{align}\label{eq:L-110}
		\cyL_{1,1,0}^{(s,r)}(n) = \tfrac{1}{2} \cyB_{0,0,c,c}^{(1,s+2)}(n) = (s+1)^{\binom{n}{2}}.
	\end{align}
	In addition, letting $\varrho\ge 1$ be a positive integer, then
	\begin{align}\label{eq:L-101}
		\cyL_{1,0,1}^{(s,r)}(n) = \cyB_{r+2\varrho,b,r+2\varrho,r}^{(s+1,1)}(n) = \tfrac{1}{2} \cyB_{r,b,r,r}^{(s+1,1)}(n).
	\end{align}
\end{theorem}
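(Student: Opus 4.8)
The plan is to reduce every enumeration to a determinant of weighted Delannoy numbers through \eqref{eq:det-D}, and then to transform the associated bivariate generating functions by the column operation of Lemma~\ref{le:right-action} and the row operation of Lemma~\ref{le:left-action}, in the same spirit as the proof of Theorem~\ref{th:L-B}.

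For \eqref{eq:L-110} I would first handle the lattice-path side. Since $D_{1,1,0}(a,b)=\binom{a+b}{a}$, and this vanishes whenever $a<0$ exactly because then $0\le a+b=(s+1)i+r<b$, the entry $\cydje_{1,1,0}^{(s,r)}(i,j)$ is honestly the polynomial $\binom{(s+1)i+r}{j}$ in the row variable $y_i:=(s+1)i$, of degree $j$ and leading coefficient $1/j!$. Column-reducing to the Vandermonde matrix $(y_i^j)$ then gives
\begin{align*}
\cyL_{1,1,0}^{(s,r)}(n)=\prod_{j=0}^{n-1}\frac1{j!}\prod_{0\le i<i'\le n-1}\big((s+1)i'-(s+1)i\big)=(s+1)^{\binom n2}.
\end{align*}
On the determinant side I would avoid a direct column reduction (the second binomial in $\cyb_{0,0,c,c}^{(1,s+2)}$ is not a convenient polynomial), and instead note from Lemma~\ref{le:B-gf} that $B_{0,0,c,c}^{(1,s+2)}(u,v)=(1-u)^{-c}B_{0,0,0,0}^{(1,s+2)}(u,v)$; as $(1-u)^{-c}$ has constant term $1$, Lemma~\ref{le:left-action} (with $\beta(u)=1$) removes the parameter $c$, giving $\cyB_{0,0,c,c}^{(1,s+2)}(n)=\cyB_{0,0,0,0}^{(1,s+2)}(n)$. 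Finally I would apply Theorem~\ref{th:main-d} with $m=1$, $a=0$, $l=s+2$ to obtain $\tfrac12\cyB_{0,0,0,0}^{(1,s+2)}(n)=\cyL_{1,s+1,1}^{(0,0)}(n)$, and observe that for $\lambda^{(0,0)}=(0,\dots,0)$ every path $p_j$ is forced to run straight north from $(-j,j)$ to $(-j,n)$, so the single configuration has weight $(s+1)^{\binom n2}$. This closes the chain in \eqref{eq:L-110}.

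For \eqref{eq:L-101}, the second equality is the $l=1$ specialization of Theorem~\ref{th:main-d}: with $m=s+1$ and $a=r$ it reads $\cyL_{1,0,1}^{(s,r)}(n)=\tfrac12\cyB_{r,0,r,r}^{(s+1,1)}(n)$, and since $l=1$ makes $l^{j+b}$ trivial the value is independent of $b$. For the first equality I would seek the column operation carrying $P_{1,0,1}^{(s,r)}$ to $B_{r+2\varrho,b,r+2\varrho,r}^{(s+1,1)}$. Writing $t=e^{2\pi\ii k/(s+1)}u^{1/(s+1)}$, Lemma~\ref{le:B-gf} expresses the target as $\RP_u\frac1{s+1}\sum_k t^{-r-2\varrho}\big(\tfrac1{1-v-t}+\tfrac{(1-t)^{2\varrho-1}}{1-v+vt}\big)$; computing $[t^m]$ of the inner function for $m\ge 2\varrho$ yields the closed form $\tfrac{1+(-1)^m v^{m-2\varrho+1}}{(1-v)^{m+1}}$, and assembling the $2\varrho$-shifted coefficients into a series in $t$ collapses to
\begin{align*}
\frac{1-v^2}{(1-v)^{2\varrho}(1-v-t)(1-v+vt)}=\frac{\alpha(v)}{1-t-v\beta(v)t^2},\qquad \alpha(v)=\frac{1+v}{(1-v)^{2\varrho+1}},\quad \beta(v)=\frac1{(1-v)^2}.
\end{align*}
The right-hand side is $\alpha(v)$ times $P_{1,0,1}$ evaluated at $v\mapsto v\beta(v)$, so after applying $\RP_u\frac1{s+1}\sum_k t^{-r}(\cdot)$ the $2\varrho$-shift in the extraction index is absorbed and one gets $\alpha(v)P_{1,0,1}^{(s,r)}(u,v\beta(v))=B_{r+2\varrho,b,r+2\varrho,r}^{(s+1,1)}(u,v)$. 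Because $\alpha,\beta$ have constant term $1$, Lemma~\ref{le:right-action} gives $\cyL_{1,0,1}^{(s,r)}(n)=\cyB_{r+2\varrho,b,r+2\varrho,r}^{(s+1,1)}(n)$ with no spurious factor $\tfrac12$: unlike the second equality, whose correction $-\tfrac1{1-u}$ survives $\RP_u$ and halves the $j=0$ column, the shift by $2\varrho$ pushes all correction terms below the extraction threshold, so they vanish.

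The main obstacle is precisely this first equality of \eqref{eq:L-101}: pinning down the pair $(\alpha,\beta)$ and proving the exact generating-function identity above. Two points demand care. First, $D_{1,0,1}$, unlike $D_{1,1,0}$, does \emph{not} coincide with the naive binomial $\binom{(s+1)i-j+r}{j}$ once its first argument is negative, so the elementary column-reduction shortcut used for \eqref{eq:L-110} is unavailable and one genuinely must work through the generating function. Second, the $2\varrho$-shift alters the $\RP_u$-extraction index, which is exactly what forces $\varrho\ge1$ and simultaneously accounts for the disappearance of the factor $\tfrac12$; verifying uniformly in $s$ and $r$ that the correction terms really do lie below threshold is the one place needing real attention, while the rest reduces to bookkeeping with Lemmas~\ref{le:right-action}, \ref{le:left-action} and \ref{le:B-gf} together with Theorem~\ref{th:main-d}.
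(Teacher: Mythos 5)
Your proposal is correct, and it reaches both chains of equalities by a route that genuinely differs from the paper's in two places. For \eqref{eq:L-110} the paper simply quotes two external evaluations --- the Gessel--Viennot formula for $\det\big(\binom{(s+1)i+r}{j}\big)$ and \cite[Theorem~3]{KKS2025} for $\cyB_{0,0,c,c}^{(1,s+2)}(n)=2(s+1)^{\binom{n}{2}}$ --- whereas you re-derive the path determinant by an explicit Vandermonde column reduction and evaluate the $\cyB$-determinant internally: Lemma~\ref{le:left-action} with $\alpha(u)=(1-u)^{-c}$ strips off $c$ (the factorization $B_{0,0,c,c}^{(1,s+2)}(u,v)=(1-u)^{-c}B_{0,0,0,0}^{(1,s+2)}(u,v)$ is immediate from Lemma~\ref{le:B-gf} at $m=1$), then Theorem~\ref{th:main-d} with $(m,a,l)=(1,0,s+2)$ reduces everything to $\cyL_{1,s+1,1}^{(0,0)}(n)$, which is $(s+1)^{\binom{n}{2}}$ because the all-north configuration is the unique element of $\LL^{(0,0)}(n)$. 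This buys a self-contained proof that also explains the $c$-independence structurally, at the cost of invoking Theorem~\ref{th:main-d} where the paper needs only citations. For \eqref{eq:L-101} both you and the paper get the second equality as the $l=1$ case of Theorem~\ref{th:main-d}; the difference is the first equality. The paper proves the $B$-to-$B$ identity \eqref{eq:a+2delta} via the matrix identity with $\big((-1)^{j-i}\tbinom{2\varrho}{j-i}\big)$ (multiplication of the generating function by $(1-v)^{2\varrho}$ plus the correction matrix $\varepsilon$) and then composes with the second equality, while you relate $\cyL_{1,0,1}^{(s,r)}(n)$ to $\cyB_{r+2\varrho,b,r+2\varrho,r}^{(s+1,1)}(n)$ in one stroke through Lemma~\ref{le:right-action}. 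Your pair $\alpha(v)=\tfrac{1+v}{(1-v)^{2\varrho+1}}$, $\beta(v)=\tfrac{1}{(1-v)^{2}}$ is exactly the composition of the paper's $l=1$ substitution from Theorem~\ref{th:L-B} with the $(1-v)^{2\varrho}$ multiplication, and your coefficient computation checks out: $[t^{m}]$ of the dissected kernel equals $\tfrac{1+(-1)^{m}v^{m-2\varrho+1}}{(1-v)^{m+1}}$ already for $m\ge 2\varrho-1$, so every extraction index $(s+1)i+r+2\varrho$ lies in the valid range and no correction term survives $\RP_u$ --- a cleaner structural explanation of why the factor $\tfrac12$ is absent from the first equality than the paper's bookkeeping with $\varepsilon(i,j)$. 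The two caveats you flag ($\varrho\ge 1$ so that $(1-t)^{2\varrho-1}$ is a polynomial, and the failure of $D_{1,0,1}$ to equal the naive binomial when its first argument is negative) are precisely the right pressure points, and both are handled correctly in your outline.
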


\begin{proof}[Proof of \eqref{eq:L-110}]
	For the evaluation of $\cyB_{0,0,c,c}^{(1,s+2)}(n)$, we directly make use of a result of Koutschan, Krattenthaler and Schlosser~\cite[p.~6, Theorem~3]{KKS2025} to get
	\begin{align*}
		\cyB_{0,0,c,c}^{(1,s+2)}(n) = \underset{{0\le i,j\le n-1}}{\det} \left((s+2)^{j}\binom{i+j+c}{i} + \binom{i-j+c}{i}\right) = 2(s+1)^{\binom{n}{2}}.
	\end{align*}
	
	For the evaluation of $\cyL_{1,1,0}^{(s,r)}(n)$, we first recall from \eqref{eq:det-D} that
	\begin{align*}
		\cyL_{1,1,0}^{(s,r)}(n) = \underset{{0\le i,j\le n-1}}{\det} \big(D_{1,1,0}((s+1)i-j+r,j)\big).
	\end{align*}
	In addition,
	\begin{align*}
		D_{1,1,0}((s+1)i-j+r,j) = \binom{(s+1)i+r}{j},
	\end{align*}
	which follows from either the double series expansion of \eqref{eq:D-gf} or the fact that $D_{1,1,0}((s+1)i-j+r,j)$ indeed counts the number of paths from $(0,0)$ to $((s+1)i-j+r,j)$ by using only east and north steps. Therefore,
	\begin{align*}
		\cyL_{1,1,0}^{(s,r)}(n) = \underset{{0\le i,j\le n-1}}{\det} \left(\binom{(s+1)i+r}{j}\right).
	\end{align*}
	Finally, we utilize a formula of Gessel and Viennot~\cite[p.~308]{GV1985} to conclude that
	\begin{align*}
		\underset{{0\le i,j\le n-1}}{\det} \left(\binom{(s+1)i+r}{j}\right) = \frac{\displaystyle \prod_{0\le i<j \le n-1} (s+1)(j-i)}{\displaystyle \prod_{0\le k\le n-1} k!} = (s+1)^{\binom{n}{2}},
	\end{align*}
	as claimed.
\end{proof}

\begin{proof}[Proof of \eqref{eq:L-101}]
	Note that
	\begin{align*}
		\cyB_{a,b,a,d}^{(m,1)}(n) = \underset{{0\le i,j\le n-1}}{\det} \left(\binom{mi+j+a}{mi+a} + \binom{mi-j+d}{mi+a}\right),
	\end{align*}
	so that the value of $b$ does not affect this determinant. Therefore, the relation
	\begin{align*}
		\cyL_{1,0,1}^{(s,r)}(n) = \tfrac{1}{2} \cyB_{r,b,r,r}^{(s+1,1)}(n)
	\end{align*}
	simply becomes the $l=1$ case of \eqref{eq:main-d}.
	
	Now it remains to show that for $a\ge 0$ and $\varrho\ge 1$,
	\begin{align}\label{eq:a+2delta}
		\cyB_{a+2\varrho,b,a+2\varrho,a}^{(m,1)}(n) = \tfrac{1}{2} \cyB_{a,b,a,a}^{(m,1)}(n).
	\end{align}
	Let us introduce the following infinite matrix
	\begin{align*}
		\BB_{a,b,c,d}^{(m,l)} := \big(\cyb_{a,b,c,d}^{(m,l)}(i,j)\big)_{i,j\ge 0},
	\end{align*}
	where
	\begin{align*}
		\cyb_{a,b,c,d}^{(m,l)}(i,j) = l^{j+b}\binom{mi+j+c}{mi+a} + \binom{mi-j+d}{mi+a}.
	\end{align*}
	In what follows, we shall show the matrix identity
	\begin{align}
		\BB_{a,b,a,a}^{(m,1)} - \big(\varepsilon(i,j)\big)_{i,j\ge 0} = \BB_{a+2\varrho,b,a+2\varrho,a}^{(m,1)}\cdot \big((-1)^{j-i}\tbinom{2\varrho}{j-i}\big)_{i,j\ge 0},
	\end{align}
	where $\varepsilon(i,j) = 1$ when $j=0$, and $\varepsilon(i,j) = 0$ otherwise. This relation clearly implies \eqref{eq:a+2delta} because
	\begin{align*}
		\underset{{0\le i,j\le n-1}}{\det} \big((-1)^{j-i}\tbinom{2\varrho}{j-i}\big) = 1,
	\end{align*}
	while for $i\ge 0$,
	\begin{align*}
		\cyb_{a,b,a,a}^{(m,1)}(i,j) - \varepsilon(i,j) = \begin{cases}
			\tfrac{1}{2}\cyb_{a,b,a,a}^{(m,1)}(i,j), & \text{if $j=0$},\\[6pt]
			\cyb_{a,b,a,a}^{(m,1)}(i,j), & \text{if $j\ge 1$}.
		\end{cases}
	\end{align*}
	
	For the moment, let us write
	\begin{align*}
		\big(\tcyb_{a+2\varrho,b,a+2\varrho,a}^{(m,1)}(i,j)\big)_{i,j\ge 0} := \BB_{a+2\varrho,b,a+2\varrho,a}^{(m,1)}\cdot \big((-1)^{j-i}\tbinom{2\varrho}{j-i}\big)_{i,j\ge 0}.
	\end{align*}
	We notice that
	\begin{align*}
		\sum_{i,j\ge 0} \tcyb_{a+2\varrho,b,a+2\varrho,a}^{(m,1)}(i,j) u^i v^j = (1-v)^{2\varrho} \sum_{i,j\ge 0} \cyb_{a+2\varrho,b,a+2\varrho,a}^{(m,1)}(i,j) u^i v^j.
	\end{align*}
	Now it suffices to prove
	\begin{align}\label{eq:rho-gf}
		\sum_{i,j\ge 0} \cyb_{a,b,a,a}^{(m,1)}(i,j) u^i v^j - \frac{1}{1-u} = (1-v)^{2\varrho} \sum_{i,j\ge 0} \cyb_{a+2\varrho,b,a+2\varrho,a}^{(m,1)}(i,j) u^i v^j.
	\end{align}
	To do so, we establish two separate identities \eqref{eq:rho-gf-1} and \eqref{eq:rho-gf-2}, which imply \eqref{eq:rho-gf} jointly.
	
	First, we show that
	\begin{align}\label{eq:rho-gf-1}
		\sum_{i,j\ge 0} \binom{mi+j+a}{mi+a} u^i v^j = (1-v)^{2\varrho} \sum_{i,j\ge 0} \binom{mi+j+a+2\varrho}{mi+a+2\varrho} u^i v^j.
	\end{align}
	To begin with, we have
	\begin{align*}
		\sum_{i,j\ge 0} \binom{mi+j+a}{mi+a} u^i v^j = \sum_{i\ge 0} u^i \sum_{j\ge 0} \binom{j+mi+a}{mi+a} v^j = \sum_{i\ge 0} \frac{u^i}{(1-v)^{mi+a+1}}, 
	\end{align*}
	where we have applied \eqref{eq:bin-gf}. Now \eqref{eq:rho-gf-1} becomes
	\begin{align*}
		\sum_{i\ge 0} \frac{u^i}{(1-v)^{mi+a+1}} = (1-v)^{2\varrho} \sum_{i\ge 0} \frac{u^i}{(1-v)^{mi+a+2\varrho+1}},
	\end{align*}
	which is obvious.
	
	Next, we show that
	\begin{align}\label{eq:rho-gf-2}
		\sum_{i,j\ge 0} \binom{mi-j+a}{mi+a} u^i v^j - \frac{1}{1-u} = (1-v)^{2\varrho} \sum_{i,j\ge 0} \binom{mi-j+a}{mi+a+2\varrho} u^i v^j.
	\end{align}
	Note that
	\begin{align*}
		&\sum_{i,j\ge 0} \binom{mi-j+a}{mi+a} u^i v^j\\
		&\qquad= \sum_{i\ge 0} u^i \sum_{j\ge 0} \binom{-j+mi+a}{mi+a} v^j\\
		&\qquad= \sum_{i\ge 0} u^i (-1)^{mi+a} \sum_{j\ge 0} \binom{j-1}{mi+a} v^j\\
		&\qquad= \sum_{i\ge 0} u^i (-1)^{mi+a} \binom{-1}{mi+a} - \sum_{i\ge 0} u^i (-v)^{mi+a+1} \sum_{j\ge 0} \binom{j+mi+a}{mi+a} v^{j}\\
		&\qquad= \frac{1}{1-u} - \sum_{i\ge 0} \frac{u^i (-v)^{mi+a+1}}{(1-v)^{mi+a+1}}.
	\end{align*}
	We similarly have, with the assumption $\varrho\ge 1$ in mind, that
	\begin{align*}
		\sum_{i,j\ge 0} \binom{mi-j+a}{mi+a+2\varrho} u^i v^j &= \sum_{i\ge 0} u^i (-1)^{mi+a+2\varrho} \sum_{j\ge 0} \binom{j+2\varrho-1}{mi+a+2\varrho} v^j\\
		&= - \sum_{i\ge 0} u^i (-v)^{mi+a+1} \sum_{j\ge 0} \binom{j+mi+a+2\varrho}{mi+a+2\varrho} v^{j}\\
		&=  - \sum_{i\ge 0} \frac{u^i (-v)^{mi+a+1}}{(1-v)^{mi+a+2\varrho+1}}.
	\end{align*}
	Combining the above two relations finally leads us to \eqref{eq:rho-gf-2}.
\end{proof}

For the H-Delannoy case, we have results of the same nature.

\begin{theorem}
	For $s\ge 1$ and $r\ge 0$,
	\begin{align}\label{eq:hL-110}
		\hcyL_{1,1,0}^{(s,r)}(n) = \tfrac{1}{2} \cyB_{0,0,c,c}^{(1,s+2)}(n) = (s+1)^{\binom{n}{2}}.
	\end{align}
	In addition, letting $\varrho\ge 1$ be a positive integer and assuming further that $r\ge 1$, then
	\begin{align}\label{eq:hL-101}
		\hcyL_{1,0,1}^{(s,r)}(n) = \cyB_{r+2\varrho-1,b,r+2\varrho-1,r-1}^{(s+1,1)}(n) = \tfrac{1}{2} \cyB_{r-1,b,r-1,r-1}^{(s+1,1)}(n).
	\end{align}
\end{theorem}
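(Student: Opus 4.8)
The plan is to mirror the proof of the Delannoy counterpart (the theorem establishing \eqref{eq:L-110} and \eqref{eq:L-101}), reducing every assertion to results already in hand: the Lindstr\"om--Gessel--Viennot representation \eqref{eq:det-H}, the Gessel--Viennot product formula, the evaluation of $\cyB_{0,0,c,c}^{(1,s+2)}(n)$ via \cite[p.~6, Theorem~3]{KKS2025}, the relation \eqref{eq:main-h} (equivalently Theorem~\ref{th:L-B}) at $l=1$, and the index-shift identity \eqref{eq:a+2delta}. I expect no genuinely new computation to be required; the content is careful bookkeeping of indices, with the hypothesis $r\ge1$ surfacing exactly where it is needed.

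For \eqref{eq:hL-110}, the evaluation $\tfrac12\cyB_{0,0,c,c}^{(1,s+2)}(n)=(s+1)^{\binom{n}{2}}$ is literally the one already carried out for \eqref{eq:L-110}, so nothing remains on that side. For the path side I would invoke \eqref{eq:det-H} to write
\[
\hcyL_{1,1,0}^{(s,r)}(n)=\underset{0\le i,j\le n-1}{\det}\big(H_{1,1,0}((s+1)i-j+r,j)\big),
\]
and then compute $H_{1,1,0}$ explicitly. Since $w_3=0$ kills every northeast step, an H-Delannoy path contributing to $H_{1,1,0}(p,q)$ is an east/north path from $(0,0)$ to $(p,q+1)$ whose last step is north; such paths are counted by the number of east/north paths to $(p,q)$, namely $\binom{p+q}{q}$. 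Hence $H_{1,1,0}((s+1)i-j+r,j)=\binom{(s+1)i+r}{j}$, which is \emph{exactly} the matrix entry appearing in the proof of \eqref{eq:L-110}. The Gessel--Viennot formula \cite[p.~308]{GV1985} then yields $(s+1)^{\binom{n}{2}}$ verbatim.

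For \eqref{eq:hL-101} I would first dispatch the last equality. Specializing \eqref{eq:main-h} (through Theorem~\ref{th:L-B}) to $l=1$, with $m=s+1$ and $a=r$, gives $\hcyL_{1,0,1}^{(s,r)}(n)=\cyB_{r+1,1,r+1,r-1}^{(s+1,1)}(n)$; recall that for $l=1$ the second subscript is immaterial, since $l^{j+b}=1$. This is the $\varrho=1$ instance of the middle expression. I then apply the index-shift identity \eqref{eq:a+2delta}, $\cyB_{a+2\varrho,b,a+2\varrho,a}^{(m,1)}(n)=\tfrac12\cyB_{a,b,a,a}^{(m,1)}(n)$, with $m=s+1$ and $a=r-1$; since $r-1+2\varrho=r+2\varrho-1$ and the repeated final subscript is $a=r-1$, this reads
\[
\cyB_{r+2\varrho-1,b,r+2\varrho-1,r-1}^{(s+1,1)}(n)=\tfrac12\cyB_{r-1,b,r-1,r-1}^{(s+1,1)}(n)\qquad(\varrho\ge1).
\]
The middle equality of \eqref{eq:hL-101} is precisely this statement, while the last equality follows by combining it (at $\varrho=1$) with the specialization above. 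Note that \eqref{eq:a+2delta} demands $a\ge0$, i.e. $r\ge1$, which is exactly the extra hypothesis; this is the sole place where it is invoked.

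The main ``obstacle'' is therefore purely one of correct index alignment: checking that specializing \eqref{eq:main-h} at $l=1$ lands on the $\varrho=1$ member of the family $\cyB_{r+2\varrho-1,b,r+2\varrho-1,r-1}^{(s+1,1)}$, and that \eqref{eq:a+2delta} with $a=r-1$ reproduces exactly the factor $\tfrac12$ together with the target subscripts. No new generating-function identity or creative-telescoping argument is needed; unlike the evaluations in Section~\ref{sec:KKS-conj}, both parts here are immediate corollaries of the structural results proved earlier.
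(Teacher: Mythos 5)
Your proposal is correct and takes essentially the same approach as the paper: for \eqref{eq:hL-110} the paper likewise identifies $H_{1,1,0}$ with $D_{1,1,0}$ (your explicit computation $H_{1,1,0}(p,q)=\binom{p+q}{q}$ is precisely the combinatorial form of that identity, since the forbidden final east step forces the path through $(p,q)$) and then falls back on \eqref{eq:L-110}. For \eqref{eq:hL-101} the paper uses exactly your two ingredients in the same way: the shift identity \eqref{eq:a+2delta} with $m=s+1$, $a=r-1$ (which is where $r\ge 1$ enters), combined with the $l=1$, $m=s+1$, $a=r$ specialization of \eqref{eq:main-h} giving the $\varrho=1$, $b=1$ instance.
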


\begin{proof}
	It is clear from \eqref{eq:D-gf} and \eqref{eq:H-gf} that $H_{1,1,0}(i,j)=D_{1,1,0}(i,j)$; this can also be seen combinatorially from the facts that for each H-Delannoy path from $(-j,j)$ to $(s(n-j)+r-j,n+1)$, the position $(s(n-j)+r-j-1,n+1)$ is forbidden by definition and that all northeast steps are also not allowed since the weight $w_3=0$, so that this H-Delannoy path must pass through $(s(n-j)+r-j,n)$ and hence becomes a Delannoy path ending at $(s(n-j)+r-j,n)$ with the weights for the three types of steps being $(w_1,w_2,w_3)=(1,1,0)$ as assumed. Therefore, \eqref{eq:hL-110} comes directly from \eqref{eq:L-110}.
	
	For \eqref{eq:hL-101}, we derive from \eqref{eq:a+2delta} that for every $\varrho\ge 1$, the relation
	\begin{align*}
		\cyB_{r+2\varrho-1,b,r+2\varrho-1,r-1}^{(s+1,1)}(n) = \tfrac{1}{2} \cyB_{r-1,b,r-1,r-1}^{(s+1,1)}(n)
	\end{align*}
	holds when $r\ge 1$. Now it remains to take $\varrho = 1$ and $b=1$ and show
	\begin{align*}
		\hcyL_{1,0,1}^{(s,r)}(n) = \cyB_{r+1,1,r+1,r-1}^{(s+1,1)}(n),
	\end{align*}
	which has already been given in \eqref{eq:main-h}.
\end{proof}

\subsection*{Acknowledgements}

Shane Chern was supported by the Austrian Science Fund (No.~10.55776/F1002). SC and AY are grateful to Ilse Fischer for bringing \cite{KKS2025} to their attention, to Christian Krattenthaler for many useful comments on an earlier draft of the manuscript, and to Marcus Sch\"onfelder and Moritz Gangl for helpful discussions. QC is grateful to Amazon Web Services for the computational support.

\bibliographystyle{amsplain}

\end{document}